\newtheorem{proposition}{Proposition}
\newtheorem{theorem}[proposition]{Theorem}
\newtheorem{lemma}[proposition]{Lemma}
\newtheorem{corollary}[proposition]{Corollary}
\theoremstyle{remark}
\newtheorem{remark}[proposition]{Remark}
\newtheorem{example}[proposition]{Example}
\theoremstyle{definition}
\numberwithin{equation}{section}
\numberwithin{proposition}{section}
\renewcommand{\le}{\leqslant}
\renewcommand{\leq}{\leqslant}
\renewcommand{\geq}{\geqslant}
\renewcommand{\subset}{\subseteq}
\newcommand{\mcl}{\mathcal}
\newcommand{\A}{{\mathcal{A}}}
\newcommand{\B}{\mathcal{B}}
\newcommand{\D}{\mathcal{D}}
\newcommand{\Diri}{\mathcal{E}}
\newcommand{\e}{\mathbf{e}}
\newcommand{\E}{\mathbb{E}}
\newcommand{\Er}{\mathbb{E}_{\rho}}
\renewcommand{\Pr}{\mathbb{P}_{\rho}}
\renewcommand{\a}{\mathbf{a}}
\renewcommand{\L}{\mathcal{L}}
\newcommand{\M}{\mathbf{M}}
\newcommand{\N}{\mathbb{N}}
\newcommand{\Ll}{\left}
\newcommand{\Rr}{\right}
\newcommand{\1}{\mathbf{1}}
\newcommand{\R}{\mathbb{R}}
\newcommand{\Zd}{{\mathbb{Z}^d}}
\renewcommand{\P}{\mathbb{P}}
\newcommand{\td}{\widetilde}
\renewcommand{\tilde}{\widetilde}
\newcommand{\de}{\delta}
\renewcommand{\d}{{\mathrm{d}}}
\newcommand{\var}{\mathbb{V}\!\mathrm{ar}}
\renewcommand{\epsilon}{\varepsilon}
\renewcommand{\fint}{\strokedint}
\newcommand{\Rd}{{\mathbb{R}^d}}
\newcommand{\n}{\mathbf{n}}
\newcommand{\mmd}{\mathcal{M}_\delta}
\newcommand{\mt}{\mathscr{M}}
\newcommand{\fil}{\mathscr{F}}
\DeclareMathOperator{\supp}{supp}
\newcommand{\Ind}[1]{\mathbf{1}_{\left\{#1\right\}}}
\renewcommand{\n}{\overrightarrow{\mathbf{n}}}
\newcommand{\op}[1]{#1^\mathrm{o}}
\newcommand{\mres}{\mathbin{\vrule height 1.4ex depth 0pt width
0.13ex\vrule height 0.13ex depth 0pt width 1ex}}
\newcommand\avsuminner[2]{%
  {\sbox0{$\m@th#1\sum$}%
   \vphantom{\usebox0}%
   \ooalign{%
     \hidewidth
     \smash{\rule[.5ex]{1.5ex}{.8pt} \relax}%
     \hidewidth\cr
     $\m@th#1\sum$\cr
   }%
  }%
}
\title[Decay of semigroup for an infinite interacting particle system]{Decay of semigroup for an infinite interacting particle system on continuum configuration spaces}
\author{Chenlin Gu}
\address[Chenlin Gu]{DMA, Ecole normale sup\'erieure, PSL University, Paris, France}
\begin{document}

\begin{abstract}
We show the heat kernel type variance decay $t^{-\frac{d}{2}}$, up to a logarithmic correction, for the semigroup of an infinite particle system on $\Rd$, where every particle evolves following a divergence-form operator with diffusivity coefficient that depends on the local configuration of particles. The proof relies on the strategy from \cite{janvresse1999relaxation}, and generalizes the localization estimate to the continuum configuration space introduced by  S. Albeverio, Y.G. Kondratiev and M. R\"ockner.

\bigskip

\noindent \textsc{MSC 2010:} 82C22, 35B65, 60K35.

\medskip

\noindent \textsc{Keywords:} interacting particle system, heat kernel estimate, configuration space.

\end{abstract}
\maketitle
%
%
%
%
%
%
%
%
\section{Introduction}
In this work, we study an interacting diffusive particle system in $\Rd$ and the heat kernel type estimate for its semigroup. Let us give an informal introduction to the model and main result at first. We denote by $\mmd(\Rd)$ the set of point measures of type $\mu = \sum_{i=1}^{\infty} \delta_{x_i}$ on $\Rd$, which we call \textit{configurations} of particles, by $\mcl F_U$ the $\sigma$-algebra generated by $\mu(V)$ tested with all the Borel set $V \subset U$, and use the shorthand $\mcl F := \mcl F_{\Rd}$. Let $\Pr$ be the Poisson point process of density $\rho \in (0,\infty)$ as the law for the configuration $\mu$, with $\Er, \var_{\rho}$ the associated expectation and variance. We have $\a_\circ : \mmd(\Rd) \to \R^{d\times d}_{sym}$ an $\mcl F_{B_1}$-measurable symmetric matrix, i.e. it only depends on the configuration in the unit ball $B_1$, and $\vert \xi \vert^2 \leq \xi \cdot \a_\circ \xi \leq \Lambda \vert \xi \vert^2$ for any $\xi \in \Rd$. Then let $\a(\mu, x) := \a_\circ(\tau_{-x} \mu)$ be the diffusive coefficient with local interaction at $x$, where $\tau_{-x}$ represents the transport operation by the direction $-x$. Denoting by ${\mu_t := \sum_{i=1}^{\infty}\delta_{x_{i,t}} }$ the configuration at time $t \geq 0$, our model can be informally described as an infinite-dimensional system with local interaction such that every particle $x_{i,t}$ evolves as a diffusion associated to the divergence-form operator $-\nabla \cdot \a(\mu_t, x_{i,t}) \nabla$. More precisely, it is a Markov process $\Ll(\Omega, (\fil_t)_{t \geq 0}, \Pr\Rr)$ defined by the \textit{Dirichlet form}
\begin{align}\label{eq:Dirichlet}
\Diri^{\a}(f, f) := \Er\Ll[ \int_{\Rd}   \nabla f(\mu, x) \cdot \a(\mu, x) \nabla f(\mu, x) \, \d \mu(x)\Rr],
\end{align}
where the directional derivative ${\e_k \cdot \nabla f(\mu, x) := \lim_{h \to 0} \frac{1}{h} (f(\mu - \delta_x + \delta_{x + h \e_k}) - f(\mu))}$ along the canonical direction $\{\e_k\}_{1 \leq k \leq d}$ is defined for a family of suitable functions and $x \in \supp(\mu)$.

One may expect that the diffusion follows the heat kernel estimate established by the pioneering work of John Nash \cite{nash1958continuity}, as every single particle is a diffusion of divergence type. This is the object of our main theorem. Let $u : \mmd(\Rd) \to \R$ be an $\mcl F$-measurable function, depending only on the configuration in the cube $Q_{l_u} := \Ll[-\frac{l_u}{2}, \frac{l_u}{2}\Rr]^d$, and smooth with respect to the transport of every particle ( i.e. $u$ belongs to the function space $C^{\infty}_c(\mmd(\Rd))$ defined in \Cref{subsubsec:Ccinfty}), and let $u_t := \Er[u(\mu_t) \vert \fil_0]$. Denoting $L^{\infty}:= L^{\infty}(\mmd(\Rd), \mcl F, \Pr)$, we have the following estimate.  

\begin{theorem}[Decay of variance]\label{thm:main}
There exists two finite positive constants ${\gamma := \gamma(\rho, d, \Lambda)}$, ${C:=C(\rho, d, \Lambda)}$ such that for any $u \in C^{\infty}_c(\mmd(\Rd))$ supported in $Q_{l_u}$, then we have
\begin{align}\label{eq:main}
\var_{\rho}[u_t] \leq C( 1 + \vert \log t \vert )^{\gamma} \Ll(\frac{1 + l_u}{\sqrt{t}}\Rr)^d \Vert u \Vert^2_{L^{\infty}}.
\end{align} 
\end{theorem}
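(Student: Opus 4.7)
The plan is to adapt the variance-decay scheme of Janvresse-Landim-Quastel-Yau~\cite{janvresse1999relaxation} to the continuum diffusive setting. Set $\phi(t) := \var_\rho[u_t]$. The symmetry of the Dirichlet form yields the energy dissipation identity $\phi'(t) = -2\, \Diri^{\a}(u_t, u_t)$, and Jensen's inequality applied to the conditional expectation delivers the $L^\infty$ contraction $\|u_t\|_{L^\infty} \le \|u\|_{L^\infty}$. The aim is to close a Nash-type differential inequality
\begin{equation*}
\phi'(t) \;\leq\; -c\,(1+|\log t|)^{-\gamma}\, \phi(t)^{1+2/d}\, \bigl((1+l_u)^d\, \|u\|_{L^\infty}^2\bigr)^{-2/d},
\end{equation*}
whose integration yields~\eqref{eq:main} after a routine calculation in $\psi(t) = \phi(t)^{-2/d}$.

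The first ingredient is a Nash-type functional inequality on the restricted configuration space $\mmd(Q_R)$. Starting from the Poincaré inequality for Poisson functionals, which together with the uniform ellipticity of $\a$ yields a spectral gap $\var_{\rho, Q_R}[f] \le C R^2 \Diri^{\a}_{Q_R}(f,f)$, I would upgrade to a Nash inequality using the ultracontractivity of the single-particle divergence-form diffusion (through Nash--De~Giorgi--Moser theory) together with a comparison at the density-field level: at the macroscopic scale the Dirichlet form dominates the gradient energy of the particle-number profile on $\Rd$, to which the $d$-dimensional Nash inequality applies, reducing the effective Nash dimension for configuration observables to $d$. The output is an inequality roughly of the form
\begin{equation*}
\var_{\rho, Q_R}[f]^{1+2/d} \;\le\; C\, R^{2}\, \Diri^{\a}_{Q_R}(f,f)\, \|f\|_{L^\infty}^{4/d}
\end{equation*}
for $\mcl F_{Q_R}$-measurable $f$.

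The second ingredient is the localization estimate. Generalizing the approach of Albeverio-Kondratiev-R\"ockner, I would show that for $u$ supported in $Q_{l_u}$ and $R \gtrsim l_u + \sqrt{t}$, the $L^2$-defect $\|u_t - \Er[u_t \mid \mcl F_{Q_R}]\|_{L^2}^2$ decays in a Gaussian fashion in $(R-l_u)/\sqrt{t}$, mirroring the diffusive propagation of information under the dynamics. Choosing $R = R_t \asymp (1 + l_u + \sqrt{t})(1 + |\log t|)^{\gamma/2}$ makes the localization error negligible, and applying the finite-volume Nash inequality to $\Er[u_t \mid \mcl F_{Q_{R_t}}]$ (whose Dirichlet energy is dominated by $\Diri^{\a}(u_t, u_t)$) closes the target ODE.

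The main difficulty is the dynamical AKR localization estimate. The original AKR argument is static, comparing Poisson reference measures via their product structure, whereas here a time-dependent version is needed to control how fast information leaks across $\partial Q_R$ under the divergence-form dynamics on $\mmd(\Rd)$. The natural approach is to differentiate $\|u_t - \Er[u_t \mid \mcl F_{Q_R}]\|_{L^2}^2$ in $t$, use an integration-by-parts formula on $\mmd(\Rd)$ (where the continuum AKR framework is essential) to express the time derivative as a Dirichlet boundary-layer contribution localized near $\partial Q_R$, and close a differential inequality yielding the desired Gaussian decay in $(R-l_u)/\sqrt{t}$. The logarithmic factor in~\eqref{eq:main} then arises as the price of the buffer $R_t - l_u \gtrsim \sqrt{t}\,(\log t)^{\gamma/2}$ required to make this leakage smaller than the leading Nash rate $t^{-d/2}$.
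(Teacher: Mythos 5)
Your proposal shares one ingredient with the paper---the localization estimate $\Er[(u_t - \A_K u_t)^2] \lesssim e^{-K/\sqrt t}\,\Er[u^2]$, and you correctly identify the main technical obstruction in proving it, namely that $\A_K u_t$ is not a priori in the domain of the Dirichlet form on $\mmd(\Rd)$ (the paper resolves this via the regularized operator $\A_{s,\epsilon}$). Beyond that, however, the central step of your scheme has a fatal gap, and the rest of the paper's argument is structured quite differently precisely to avoid it.

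The gap is the Nash-type (and already the Poincar\'e-type) inequality on $\mmd(Q_R)$. An unconditional estimate $\var_{\rho,Q_R}[f] \le C R^2\,\Diri^{\a}_{Q_R}(f,f)$, and a fortiori the Nash upgrade $\var_{\rho,Q_R}[f]^{1+2/d}\le CR^2\,\Diri^{\a}_{Q_R}(f,f)\,\|f\|_{L^\infty}^{4/d}$, cannot hold. The Dirichlet form has a nontrivial kernel: any bounded function of the form $f(\mu)=g(\mu(Q^1_l),\ldots,\mu(Q^q_l))$ that depends only on particle counts in sub-cubes of $Q_R$ satisfies $\nabla f\equiv 0$ $\mu$-a.e.\ (moving a single particle within the interior of a sub-cube does not change any count), so $\Diri^{\a}_{Q_R}(f,f)=0$, while $\var_\rho[f]>0$ in general. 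These are exactly the slowly-mixing, ``conserved'' directions of the dynamics. Even if you restrict to a fixed number $N$ of particles, the correct Nash dimension of the $N$-body system on $Q_R^N$ is of order $Nd\sim \rho R^d\cdot d$, not $d$, so the ultracontractivity rate you would obtain is nowhere near $t^{-d/2}$. Your informal remark about ``reducing the effective Nash dimension to $d$ at the density-field level'' is precisely the nontrivial part of the problem, and it is not something a functional inequality can deliver by itself: the paper's entire machinery (conditioning on the particle-number vector $\M_{L,l}$, the perturbation Lemma~\ref{lem:4.2}, the entropy bound in Lemma~\ref{lem:Entropy}, and the coarse-graining/covering argument) is built specifically to handle the conserved-quantity directions that your asserted inequality would be forced to ignore.

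Concretely, the paper takes a different route. It works with the ``approximation--variance'' decomposition $u_t = v_t + w_t$ from \cref{eq:defTwoTerms}, where $w_t$ is the spatial average of translates $\tau_y u_t$ over the lattice $\mcl Z_K$ with $K\sim\sqrt t$. The term $\Er[(w_t)^2]$ decays by $L^2$-contraction together with the independence of $\tau_{x}u$ and $\tau_yu$ for $|x-y|\ge l_u$; this is where the $(l_u/\sqrt t)^d$ factor actually comes from, and your Nash scheme has no mechanism producing that $l_u$-dependence. The remainder $v_t$ is then controlled through \Cref{prop:Approximation}, which is \emph{not} a Nash ODE but an estimate of increments $(t_{n+1})^{(d+2)/2}\Er[(v_{t_{n+1}})^2]-(t_n)^{(d+2)/2}\Er[(v_{t_n})^2]$ over a geometric time scale $t_{n+1}=Rt_n$, combining: (i) localization; (ii) the conditional spectral inequality $\Er[(\A_Lf-\B_{L,l}f)^2]\le R_0 l^2\,\Diri(f,f)$ of \Cref{prop:AB}, which is a Poincar\'e inequality only in the fibers where particle counts are fixed (so it evades the kernel issue); (iii) a one-block perturbation bound $\Er[g(u-\tau_hu)]^2\lesssim (l_k\|u\|_{L^\infty})^2\,\Diri_{Q_{l_k}}(\sqrt g)$ from \Cref{prop:Perturbation}, fed by the entropy identity; and (iv) the coarse-graining/covering argument that converts the telescoping over a lattice path into a bound on a \emph{global} Dirichlet form $\Diri(\sqrt{g_{M,t}})$. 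Replacing this chain by a single functional inequality on $\mmd(Q_R)$ is exactly what does not work.
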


\smallskip

Interacting particle systems remain an active research topic, and it is hard to list all the references. We refer to the excellent monographs \cite{kipnis1998scaling, komorowski2012fluctuations, liggett2012interacting, spohn2012large} for a panorama of the field. In recent years, many works in probability and stochastic processes illustrate the diffusion universality in various models: a well-understood model is the \textit{random conductance model}, see \cite{biskup2011recent} for a survey, and especially the \textit{heat kernel bound} and \textit{invariance principle} is established for the percolation clusters in \cite{berger2007quenched, mathieu2007quenched, sidoravicius2004quenched, mathieu2008quenched, barlow2004random, hambly2009parabolic, sapozhnikov2017random}; from the view point of \textit{stochastic homogenization}, the \textit{quantitative} results are also proved in a series of work \cite{armstrong2016lipschitz, armstrong2016mesoscopic, armstrong2016quantitative, armstrong2017additive, gloria2011optimal,gloria2012optimal,gloria2014optimal,gloria2014regularity,gloria2015quantification}, and the monograph \cite{armstrong2018quantitative}, and these techiques also apply on the percolation clusters setting, as shown in \cite{armstrong2018elliptic, dario2018optimal, gu2019efficient, dario2019quantitative}; for the system of hard-spheres, Bodineau, Gallagher and Saint-Raymond prove that Brownian motion is the Boltzmann-Grad limit of a tagged particle in  \cite{bodineau2015hard, bodineau2016brownian, bodineau2018derivation}. All these works make us believe that the model in this work should also have diffusive behavior in large scale or long time.

\smallskip

Notice that our model is of \textit{non-gradient type}, and our result is established in the continuum configuration space rather than a function space on $\Rd$. In previous works, the construction of similar diffusion processes is studied by Albeverio, Kondratiev and R{\"o}ckner using Dirichlet forms in \cite{albeverio1996canonical, albeverio1996differential, albeverio1998analysis, albeverio1998analysis2}; see also the survey \cite{rockner1998stochastic}. To the best of our knowledge, we do not find \Cref{thm:main} in the literature. While in the lattice side, let us remark one important work \cite{janvresse1999relaxation} by Janvresse, Landim, Quastel and Yau, where the decay of variance is proved in the $\Zd$ zero range model, which is of \textit{gradient type}. Since our research is inspired by \cite{janvresse1999relaxation} and also uses some of their techniques, we point out our contributions in the following.

Firstly, we give an explicit bound with respect to the size of the support of the local function $u$, that is uniform over $t$; the bound $\Ll(\frac{l_u}{\sqrt{t}}\Rr)^{d}$ captures the correct typical scale. For comparison, \cite[Theorem 1.1]{janvresse1999relaxation} states the result
\begin{align}\label{eq:Longtime}
\var_{\rho}[u_t] = \frac{[\tilde{u}'(\rho)]^2 \chi(\rho)}{[8\pi \phi'(\rho)t]^{\frac{d}{2}}} + o\Ll(t^{-\frac{d}{2}}\Rr),
\end{align}
which should be considered as the asymptotic behavior in long time, and the term $o\Ll(t^{-\frac{d}{2}}\Rr)$ is of type $(l_u)^{5d}t^{-\Ll(\frac{d}{2}+\epsilon\Rr)}$ if one tracks carefully the dependence of $l_u$ in the steps of the proof of \cite[Theorem 1.1]{janvresse1999relaxation}. To get the typical scale $\Ll(\frac{l_u}{\sqrt{t}}\Rr)^{d}$, we do some combinatorial improvement in the intermediate coarse-graining argument in \cref{eq:PerTwoa}; see also \Cref{fig:coarse-graining} for illustration. On the other hand, we also wonder if we could establish a similar result as \cref{eq:Longtime} to identify the diffusive constant in the long time behavior. This an interesting question and one perspective in future research, but a major difficulty here is to characterize the effective diffusion constant, because the zero range model satisfies the \textit{gradient condition} while our model does not. We believe that it is related to the \textit{bulk diffusion coefficient} and the equilibrium density fluctuation in the lattice nongradient model as indicated in \cite[eq.(2.14), Proposition 2.1]{spohn2012large}.  

Secondly, we extend a localization estimate to the continuum configuration space: under the same context of \Cref{thm:main}, and recalling that $\mcl F_{Q_K}$ represents the information of $\mu$ in the cube ${Q_K = \Ll[-\frac{K}{2}, \frac{K}{2}\Rr]^d}$, we define $\A_K u_t := \Er[u_t \vert \mcl F_{Q_K}]$, and show that for every ${t \geq \max\Ll\{(l_u)^2, 16 \Lambda^2 \Rr\}}$ and $K \geq \sqrt{t}$ 
\begin{equation}\label{eq:LocalIntro}
\Er\Ll[(u_t - \A_K u_t)^2\Rr] \leq C(\Lambda)\exp\left(- \frac{K}{\sqrt{t}} \right)\Er\Ll[u^2\Rr].
\end{equation}
This is a key estimate appearing in \cite[Proposition 3.1]{janvresse1999relaxation}, and is also natural as $\sqrt{t}$ is the typical scale of diffusion, thus when $K \gg \sqrt{t}$ one get very good approximation in \cref{eq:LocalIntro}. Its generalization in the continuum configuration space is non-trivial, since in the proof of \cite[Proposition 3.1]{janvresse1999relaxation}, one tests the Dirichlet form with $\A_K u_t$, but in our model it is not in the domain of Dirichlet form $\D(\Diri^{\a})$ and one cannot put $\A_K u_t$ directly in the Dirichlet form \cref{eq:Dirichlet}. This is one essential difference between our model and a lattice model. To solve it, we have to apply some regularization steps which we present in \Cref{thm:localization}.

Finally, we remark kindly a minor error in the proof in \cite{janvresse1999relaxation} and fix it when revisiting the paper. This will be presented in \Cref{subsec:Outline} and \Cref{rmk:Error}.

\smallskip

The rest of this article is organized as follows. In \Cref{sec:Pre}, we define all the notations and the rigorous construction of our model. \Cref{sec:Strategy} is the main part of the proof of \Cref{thm:main}, where \Cref{subsec:Outline} gives its outline and we fix the minor error in \cite{janvresse1999relaxation} mentioned above. The proof of some technical estimates used in \Cref{sec:Strategy} are put in the last two sections, where \Cref{sec:Localization} proves the localization estimate \cref{eq:LocalIntro} in continuum configuration space, and \Cref{sec:Toolbox} serves as a toolbox of other estimates including spectral inequality, perturbation estimate and calculation of the entropy.

\section{Preliminaries}\label{sec:Pre}
\subsection{Notations}\label{subsec:Notations}
In this part, we introduce the notations used in this paper. We write~$\Rd$ for the $d$-dimensional Euclidean space, $B_r(x)$ for the ball of radius $r$ centered at $x$, and $Q_s(x) := x + \Ll[- \frac{s}{2}, \frac{s}{2}\Rr]^d$ as the cube of edge length $s$ centered at $x$. We also denote by $B_r$ and $Q_s$ respectively short for $B_r(0)$ and $Q_s(0)$. The lattice set is defined by $ \mcl Z_s := \Zd \cap Q_s$.
\subsubsection{Continuum configuration space}
For any metric space $(E,d)$, we denote by $\mcl M(E)$ the set of Radon measures on $E$. For every Borel set $U \subset E$, we denote by $\mcl F_U$ the smallest {$\sigma$-algebra} such that for every Borel subset $V \subset U$, the mapping ${\mu \in \mcl M(E) \mapsto \mu(V)}$ is measurable. For a $\mcl F_U$-measurable function $f : \mcl M(E) \to \R$, we say that $f$ supported in $U$ i.e. $\supp(f) \subset U$. In the case $\mu \in \mcl M(E)$ is of finite total mass, we write
\begin{equation}  
\label{e.def.fint}
\fint f \, \d \mu := \frac{\int f \, \d \mu}{\int  \d \mu}.
\end{equation}

We also define the collection of point measure $\mmd(E) \subset \mcl M(E)$
\begin{align*}
\mmd (E) := \Ll\{ \mu \in \mcl M(E) :  \mu = \sum_{i \in I} \delta_{x_i} \text{ for some } I \text{ finite or countable}, \text{ and } x_i \in E \text{ for any } i \in I \Rr\},
\end{align*}
which serves as the \textit{continuum configuration space} where each Dirac measure stands the position of a particle. In this work we will mainly focus on the Euclidean space $\Rd$ and its associated point measure space $\mmd(\Rd)$, and use the shorthand notation $\mcl F := \mcl F_{\Rd}$.

We define two operations for elements in $\mmd(\Rd)$: \textit{restriction} and  \textit{transport}.
\begin{itemize}
\item For every $\mu \in \mmd(\Rd)$ and Borel set $U \subset \Rd$, we define the restriction operation $\mu \mres U$, such that for every Borel set $V \subset \Rd$, $(\mu \mres U) (V) = \mu (U \cap V)$. Then for a function $f : \mmd(\Rd) \to \R$ which is $\mcl F_U$-measurable, we have $f(\mu) = f(\mu \mres U)$.
\item The transport on the set is defined as 
\begin{align*}
\forall h \in \Rd, U \subset \Rd,  \tau_h U := \{y + h: y \in U\}.
\end{align*}
Then for every $\mu \in \mmd(\Rd)$ and $h \in \Rd$, we define the transport operation $\tau_h \mu$ such that for every Borel set $U$, we have 
\begin{align}\label{eq:transportMu}
\tau_h \mu (U) := \mu(\tau_{-h} U).
\end{align} 
For $f$ an $\mcl F_V$-measurable function,  we also define the transport operation $\tau_h f$ as a pullback that 
\begin{align}\label{eq:transportFunction}
\tau_h f(\mu) := f(\tau_{-h} \mu),
\end{align} 
which is an $\mcl F_{\tau_h V}$-measurable function.
\end{itemize}
Notice that the restriction operation can be defined similarly in $\mcl M(E)$ for a metric space, but the transport operation requires that $E$ is at least a vector space.

\smallskip

We fix $\rho > 0$ once and for all, and define $\Pr$ a probability measure on $(\mmd(\Rd),\mcl F)$, to be the Poisson measure on $\Rd$ with density $\rho$ (see \cite{kingman2005p}). We denote by $\Er$ the expectation, $\var_{\rho}$ the variance associated with the law~$\Pr$, and by $\mu$ the canonical $\mmd(\Rd)$-valued random variable on the probability space $(\mmd(\Rd), \mcl F, \Pr)$. In the case $U \subset \Rd$ a bounded Borel set and $f$ a $\mcl F_{U}$-measurable function, we can rewrite the expectation $\Er[f]$ in an explicit expression
\begin{equation}  
\label{e.bulky}
\Er \Ll[ f \Rr] = \sum_{N = 0}^{+\infty} e^{-\rho|U|} \frac{(\rho |U|)^N}{N!} \fint_{U^N} f \Ll( \sum_{i = 1}^N \de_{x_i} \Rr)   \, \d x_1 \cdots \d x_N. 
\end{equation}
For instance, for every bounded Borel set $U \subset \Rd$ and bounded measurable function $g : U \to \R$, we can write
\begin{equation*}  
\Er \Ll[ \int_U g(x) \, \d \mu(x) \Rr] = \rho \int_U g(x) \, \d x.
\end{equation*}
Notice that the measure $\mu$ is a Poisson point process under $\Pr$. In particular, the measures $\mu \mres U$ and $\mu \mres (\Rd \setminus U)$ are independent, and the conditional expectation $\Er \Ll[ \cdot \vert \mcl F_{(\Rd \setminus U)}\Rr]$ can thus be described equivalently as an averaging over the law of~$\mu \mres U$.

For any $1 \leq p < \infty$, we denote by $L^p(\mmd(U))$ the set of $\mcl F_U$-measurable functions $f : \mmd(U) \to \R$ such that the norm
\begin{equation*}  
\|f \|_{L^p(\mmd(U))} :=  \Ll(\Er \Ll[ |f|^p \Rr] \Rr)^\frac 1 p 
\end{equation*}
is finite and $L^p$ short for $L^p(\mmd(\Rd))$. We denote by $L^{\infty}(\mmd(U))$ the norm defined by essential upper bound under $\Pr$.

\subsubsection{Derivative and $C_c^{\infty}(\mmd(U))$}\label{subsubsec:Ccinfty}
We define the directional derivative for a $\mcl F_{U}$-measurable function $f : \mmd(U) \to \R$. Let $\{\e_k\}_{1 \leq k \leq n}$ be $d$ canonical directions, for $x \in \supp(\mu)$, we define 
\begin{align*}
\partial_k f(\mu, x) := \lim_{h \to 0} \frac{1}{h} (f(\mu - \delta_x + \delta_{x + h \e_k}) - f(\mu)),
\end{align*}
if the limit exists, and the gradient as a vector 
\begin{align*}
{\nabla f(\mu, x) := (\partial_1 f(\mu, x), \partial_2 f(\mu, x), \cdots \partial_d f(\mu, x))}.
\end{align*}
One can define the function with higher derivative iteratively, but  here we use a more natural way: for every Borel set $U \subset \Rd$ and $N \in \N$,  let $\mmd(U, N) \subset \mmd(E)$ be defined as
\begin{align*}
\mmd(U, N) := \Ll\{\mu \in \mmd(\Rd) : \mu = \sum_{i=1}^N x_i,  x_i \in U \text{ for every } 1 \leq i \leq N \Rr\}.
\end{align*} 
Then a function $f : \mcl M_\de(U,N) \to \R$ can be identified with a function $\td f : U^N \to \R$ by setting
\begin{equation}  
\label{e.def.tdf}
\td f(x) = \td f(x_1,\ldots,x_N) := f \Ll( \sum_{i = 1}^N \de_{x_i} \Rr) .
\end{equation}
The function $\td f$ is invariant under permutations of its $N$ coordinates. Conversely, any function satisfying this symmetry can be identified with a function from $\mmd(U,N)$ to $\R$. We denote by $C^\infty(\mmd(U,N))$ the set of functions $f : \mmd(U,N) \to \R$ such that $\td f$ is infinitely differentiable. 
 For every $f \in C^\infty(\mmd(U,N))$ and $x_1,\ldots,x_N \in U$, the gradient at $x_1$ coincides with the its canonical sense for the coordinate $x_1$.
\begin{equation}\label{def.grad.mu}
\nabla f \Ll( \sum_{i = 1}^N \de_{x_i} ,x_1 \Rr) = \nabla_{x_1} \td f(x_1,\ldots,x_N).
\end{equation}

We denote by $C^\infty_c(\mmd(U))$ the set of functions $f : \mmd(U) \to \R$ that satisfy:
\begin{enumerate}  
\item there exists a compact Borel set $V \subset U$ such that $f$ is $\mcl F_V$-measurable; 
\item for every $N \in \N$,
\begin{equation*}  
\text{the mapping } \Ll\{
\begin{array}{rcl}  
\mmd(U,N) & \to & \R \\
\mu & \mapsto & f(\mu)
\end{array}
\Rr.
\mbox{belongs to $C^\infty(\mmd(U,N))$.}
\end{equation*}
\item the function is bounded. 
\end{enumerate}
A more heuristic description for $f \in C^\infty_c(\mmd(U))$ is a  function uniformly bounded, depending only on the information in a compact subset $V \subset U$, and when we do projection $f(\mu) = f(\mu \mres V)$ it can be identified as a function $C^{\infty}$ with finite coordinate, and also smooth when the number of particles in $V$ changes.  

\subsubsection{Sobolev space on $\mmd(U)$}
We define the $H^1(\mmd(U))$ norm by
\begin{equation*}  
\|f\|_{H^1(\mmd(U))} := \Ll( \|f\|_{L^2(\mmd(U))}^2 + \Er \Ll[ \int_U |\nabla f|^2 \, \d \mu \Rr] \Rr)^\frac 1 2,
\end{equation*}
and let $H^1_0(\mmd(U))$ denote the completion with respect to this norm of the space 
\begin{equation*}  
\Ll\{f \in C^\infty_c(\mmd(U)) \ : \ \|f\|_{H^1(\mmd(U))} < \infty\Rr\}.
\end{equation*}

\subsection{Construction of model}\label{subsec:Construction}
\subsubsection{Diffusion coefficient}
In this part, we define the coefficient field of the diffusion. We give ourselves a symmetric matrix valued  function $\a_\circ : \mmd(\Rd) \to \R^{d \times d}_{sym}$ which satisfies the following properties:
\begin{itemize}  
\item uniform ellipticity: there exists $\Lambda \in [1,+\infty)$ such that for every $\mu \in \mmd(\Rd)$ and every $\xi \in \Rd$,  
\begin{equation}  
\label{e.ellipticity}
\vert \xi \vert^2 \le \xi \cdot \a_\circ(\mu) \xi \le \Lambda \vert \xi \vert^2\, ;
\end{equation}
\item locality: for every $\mu \in \mmd(\Rd)$, $\a_\circ(\mu) = \a_\circ\Ll(\mu \mres B_1\Rr)$.
\end{itemize}
We extend $\a_\circ$ by stationarity using the transport operation defined in \cref{eq:transportFunction}: for every $\mu \in \mmd(\Rd)$ and $x \in \Rd$,
\begin{equation*}  
\a(\mu,x) := \tau_x \a_{\circ}(\mu) = \a_\circ(\tau_{-x}\mu).
\end{equation*}
A typical example of a coefficient field $\a$ of interest is ${\a_\circ(\mu) := (1 + \1_{\{ \mu(B_1) = 1  \}}})\mathbf{Id}$
whose extension is given by $\a(\mu,x) := (1 + \1_{\{ \mu(B_1(x)) = 1  \}}) \mathbf{Id}$.
In words, for $x \in \supp(\mu)$, the quantity $\a(\mu,x)$ is equal to $2$ whenever there is no other point than $x$ in the unit ball around $x$, and is equal to $1$ otherwise.

\subsubsection{Markov process defined by Dirichlet form}
In this part, we construct our infinite particle system on $\mmd(\Rd)$ by Dirichlet form (see \cite{fukushima2010dirichlet, ma2012introduction} for the notations). We define at first the non-negative bilinear symmetric form 
\begin{align*}
\Diri^{\a}(f, g) := \Er\Ll[ \int_{\Rd}  \nabla f(\mu, x) \cdot \a(\mu, x) \nabla g(\mu, x) \, \d \mu(x)\Rr],
\end{align*}
on its \textit{domain} $\D(\Diri^{\a})$ that 
\begin{align*}
\D(\Diri^{\a}) := H^1_0(\mmd(\Rd)).
\end{align*}
We also use $\Diri^{\a}(f) := \Diri^{\a}(f, f)$ for short. It is clear that $\Diri^{\a}$ is \textit{closed} and \textit{Markovian} thus it is a \textit{Dirichlet form}, so it defines the correspondence between the Dirichlet form and the generator $\L$ that 
\begin{align*}
\Diri^{\a}(f,g) = \Er\Ll[f (-\L)g\Rr], \qquad \D(\Diri^{\a}) = \D(-\L).
\end{align*}
and a $L^2(\mmd(\Rd))$ strongly continuous Markov semigroup $(P_t)_{t \geq 0}$. We denote by $(\fil_t)_{t \geq 0}$ its filtration and $(\mu_t)_{t \geq 0}$ the associated $\mmd(\Rd)$-valued Markov process which stands the configuration of the particles, then for any $u \in L^2(\mmd(\Rd))$, 
\begin{align*}
u_t(\mu) := P_t u(\mu) = \Er[u(\mu_t) \vert \fil_0],
\end{align*}
is an element in $\D(\Diri^{\a})$ and is characterized by the parabolic equation on $\mmd(\Rd)$ that for any $v \in \D(\Diri^{\a})$
\begin{align}\label{eq:Evolution}
\Er[u_t v] - \Er[u v] = - \int_{0}^t \Diri^{\a}(u_s, v) \, \d s.
\end{align}
Finally, we remark that the average is conserved for $u_t$ as we test \cref{eq:Evolution} by constant $1$ that 
\begin{align}\label{eq:uAverage}
\Er[u_t] - \Er[u] = - \int_{0}^t \Er\Ll[ \int_{\Rd} \nabla 1 \cdot  \a(\mu, x) \nabla u_s(\mu, x) \, \d \mu\Rr] \, \d s = 0.
\end{align}
In this work, we focus more on the quantitative property of $P_t$; see \cite{rockner1998stochastic} for more details about the trajectory property of similar type of process. 

\subsection{A solvable case}\label{subsec:Solvable}
We propose a solvable model to illustrate that the behavior of this process is close to the diffusion and the rate of decay is the best one that we can expect.

In the following, we suppose that $\a = \frac{1}{2}$ which means that in fact every particle evolves as a Brownian motion i.e. $\mu = \sum_{i=1}^{\infty} \delta_{x_i} $, $\mu_t = \sum_{i=1}^{\infty} \delta_{B^{(i)}_t}$ that $\Ll(B^{(i)}_t\Rr)_{t \geq 0}$ is a Brownian motion issued from $x_i$ and $\Ll(B^{(i)}_{\cdot}\Rr)_{i\in \N}$ is independent.

\begin{example}
Let $u(\mu) := \int_{\Rd} f \, \d \mu $ with $f \in C^{\infty}_c(\Rd)$. 
In this case, we have
\begin{align*}
u_t(\mu) = P_t u(\mu) &= \Er\Ll[ u(\mu_t) \vert \fil_0 \Rr] =  \Er\Ll[ \Ll. \sum_{i \in \N} f\Ll(B^{(i)}_t\Rr) \Rr\vert \fil_0 \Rr] =  \int_{\Rd} f_t(x) \, \d \mu(x),
\end{align*}  
where $f_t \in C^{\infty}(\Rd)$ is the solution of the Cauchy problem of the standard heat equation: ${\Phi_t(x) = \frac{1}{(2\pi t)^{\frac{d}{2}}} \exp\Ll(- \frac{\vert x \vert^2}{2t}\Rr)}$ and $f_t(x) = \Phi_t \star f(x)$. Then we use the formula of variance for Poisson process
\begin{align*}
\var_{\rho}\Ll[u \Rr] &= \rho\int_{\Rd} f^2(x) \, \d x = \rho\Vert f \Vert^2_{L^2\Ll(\Rd\Rr)}, \\
\var_{\rho}\Ll[u_t \Rr] &= \rho\int_{\Rd} f_t^2(x) \, \d x = \rho\Vert f_t \Vert^2_{L^2\Ll(\Rd\Rr)}.
\end{align*}
By the heat kernel estimate for the standard heat equation, we known that ${\Vert f_t \Vert^2_{L^2(\Rd)} \simeq C(d)t^{-\frac{d}{2}}  \Vert f \Vert^2_{L^1(\Rd)}}$, thus the scale $t^{-\frac{d}{2}}$ is the best one that we can obtain. Moreover, if we take $f = \Ind{Q_r}$, and ${t=r^{2(1-\epsilon)}}$ for a small $\epsilon >0$, then we see that the typical scale of diffusion is a ball of size $r^{1-\epsilon}$. So for every $x \in Q_{r\Ll(1-r^{-\frac{\epsilon}{2}}\Rr)}$, the value $f_t(x) \simeq 1 - e^{-r^{\frac{\epsilon}{2}}}$ and we have 
\begin{align*}
\var_{\rho}\Ll[u_t \Rr] = \rho\int_{\Rd} f_t^2(x) \, \d x \geq \rho r^d(1-r^{-\frac{\epsilon}{2}}) = (1-r^{-\frac{\epsilon}{2}})\var_{\rho}\Ll[u \Rr].
\end{align*}
It illustrates that before the scale $t = r^2$, the decay is very slow so in the \Cref{thm:main} the factor $\Ll(\frac{l_u}{\sqrt{t}}\Rr)^d$ is reasonable.
\end{example}

\section{Strategy of proof}\label{sec:Strategy}
In this part, we state the strategy of the proof of \Cref{thm:main}. We will give a short outline in \Cref{subsec:Outline}, which can be see as an ``approximation-variance decomposition", and then focus on the term approximation in \Cref{subsec:Approximation}. Several technical estimates will be used in this procedure and their proofs will be postponed in \Cref{sec:Localization} and \Cref{sec:Toolbox}.
\subsection{Outline}\label{subsec:Outline}
As mentionned, this work is inspired from  \cite{janvresse1999relaxation}, and we revisit the strategy here.
We pick a centered $u \in C^{\infty}_c(\mmd(\Rd))$ supported in $Q_{l_u}$ such that $\Er[u]  = 0$ and this implies $\Er[u_t] = 0$ from \cref{eq:uAverage}. Then we set a multi-scale $\{t_n\}_{n\geq 0}, t_{n+1} = Rt_n$, where $R > 1$ is a scale factor to be fixed later. It suffices to prove that \cref{eq:main} for every $t_n$, then for $t \in [t_n, t_{n+1}]$, one can use the decay of $L^2$ that 
\begin{align*}
\Er[(u_t)^2]  \leq \Er[(u_{t_n})^2]  \leq C(1+\log(t_n))^{\gamma}\Ll(\frac{1+l_u}{\sqrt{t_n}}\Rr)^d \Vert u \Vert^2_{L^{\infty}} \leq C R^{\frac{d}{2}}(1 + \log t)^{\gamma}\Ll(\frac{1 + l_u}{\sqrt{t}}\Rr)^d \Vert u \Vert^2_{L^{\infty}},
\end{align*}
then by resetting the constant $C$ one concludes the main theorem. Another ingredient of the proof is an ``approximation-variance type decomposition": 
\begin{equation}\label{eq:defTwoTerms}
\begin{split}
u_t &= v_t + w_t, \\
v_t &:= u_t - \frac{1}{\vert \mcl Z_K \vert} \sum_{y \in \mcl Z_K} \tau_{y} u_t,\\
w_t &:= \frac{1}{\vert \mcl Z_K \vert} \sum_{y \in \mcl Z_K} \tau_{y} u_t,
\end{split}
\end{equation}
where we recall $\mcl Z_K = Q_K \cap \Zd$ is the lattice set of scale $K$. The philosophy of this decomposition is that in long time, the information in a local scale $K$ is mixed, thus $w_t$ as a spatial average is a good approximation of $u_t$ and $v_t$ is the error term. Thus, the following control \Cref{prop:Variance} and \Cref{prop:Perturbation} of the two terms $w_t$ and $v_t$ proves the main theorem \Cref{thm:main}.

\begin{proposition}\label{prop:Variance}
There exists a finite positive number $C := C(d)$ such that for any ${u \in C^{\infty}_c(\mmd(\Rd))}$ with mean zero, supported in $Q_{l_u}$ and for any $K \geq l_u$, we have 
\begin{align}\label{eq:Variance}
\var_{\rho}\left[ \left( \frac{1}{\vert \mcl Z_K \vert} \sum_{y \in \mcl Z_K} \tau_{y} u_t \right)^2\right] \leq  C(d)\Ll(\frac{l_u}{K}\Rr)^d \Er[u^2].
\end{align}   
\end{proposition}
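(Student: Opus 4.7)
The plan is to reduce the estimate at time $t$ to one at time zero using the translation equivariance of the semigroup $P_t$, and then to bound the second moment of the spatial average $w_0$ by exploiting independence of the Poisson process on disjoint cubes together with the mean-zero hypothesis on $u$. I interpret the left-hand side of \cref{eq:Variance} as $\Er[w_t^2]$, which equals $\var_\rho[w_t]$ because $\Er[u]=0$ and the conservation law \cref{eq:uAverage} give $\Er[w_t]=0$.

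For the reduction, note that $\a(\mu,x)=\a_\circ(\tau_{-x}\mu)$ is stationary and $\Pr$ is translation invariant, so a direct change of variables in \cref{eq:Dirichlet} yields $\Diri^{\a}(\tau_z f,\tau_z g)=\Diri^{\a}(f,g)$ for every $z\in\Rd$ and $f,g\in\D(\Diri^{\a})$. Consequently the generator $\L$, and with it the semigroup $P_t$, commutes with each spatial translation $\tau_z$ on $L^2(\mmd(\Rd))$. Applied to the lattice average,
\begin{equation*}
w_t = \frac{1}{|\mcl Z_K|}\sum_{y\in\mcl Z_K}\tau_y P_t u = P_t w_0, \qquad w_0 := \frac{1}{|\mcl Z_K|}\sum_{y\in\mcl Z_K}\tau_y u,
\end{equation*}
and the $L^2$-contractivity of $P_t$ reduces matters to bounding $\Er[w_0^2]$.

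Expanding,
\begin{equation*}
\Er[w_0^2] = \frac{1}{|\mcl Z_K|^2}\sum_{y,y'\in\mcl Z_K}\Er[\tau_y u\cdot\tau_{y'}u].
\end{equation*}
Since $u$ is $\mcl F_{Q_{l_u}}$-measurable, $\tau_y u$ is $\mcl F_{Q_{l_u}(y)}$-measurable. When $|y-y'|_\infty>l_u$ the cubes $Q_{l_u}(y)$ and $Q_{l_u}(y')$ have disjoint interiors, so the independence of the Poisson process on disjoint sets makes $\tau_y u$ and $\tau_{y'}u$ independent, and the mean-zero hypothesis $\Er[u]=0$ kills the corresponding covariance. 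For the remaining ``near'' pairs, Cauchy--Schwarz together with the stationarity of $\Pr$ gives $|\Er[\tau_y u\cdot\tau_{y'}u]|\leq\Er[u^2]$; a lattice count shows that each $y\in\mcl Z_K$ has at most $C(d)(1+l_u)^d$ such neighbors, and combined with $|\mcl Z_K|\geq c(d)K^d$ and the hypothesis $K\geq l_u$ this recovers the advertised bound $C(d)(l_u/K)^d\Er[u^2]$ after absorbing dimensional constants into $C(d)$.

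The main obstacle is really the first step: verifying that $P_t$ commutes with the continuum translations $\tau_z$ within the Dirichlet form construction of \Cref{subsec:Construction}, since one has to lift an invariance of the bilinear form (essentially a change-of-variables statement using stationarity of $\a_\circ$ and of $\Pr$) through the Dirichlet-form-to-semigroup correspondence. Once this commutativity is in place the remainder is a short, exact independence-plus-counting argument on the Poisson reference measure, cleaner than the approximate decoupling required in the lattice zero-range setting of \cite{janvresse1999relaxation}.
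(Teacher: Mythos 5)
Your proof is correct and follows essentially the same route as the paper: write the spatial average at time $t$ as $P_t$ applied to the time-zero average, use $L^2$-contractivity of the semigroup, expand the square, and kill the far-apart covariances by independence of the Poisson process on disjoint cubes together with $\Er[u]=0$, leaving a near-pair count of order $(l_u/K)^d$. The only difference is that you make explicit two points the paper leaves implicit — the commutation of $P_t$ with translations (via stationarity of $\a_\circ$ and $\Pr$ lifted through the Dirichlet form) and the lattice counting — which is a welcome addition rather than a deviation.
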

\begin{proof}
Then we can estimate the variance simply by $L^2$ decay that 
\begin{multline*}
\Er[(w_t)^2] = \Er\Ll[\Ll( P_t\Ll(\frac{1}{\vert \mcl Z_K \vert} \sum_{y \in \mcl Z_K}  \tau_{y}u \Rr)\Rr)^2\Rr] 
\leq \Er\Ll[\Ll(\frac{1}{\vert \mcl Z_K \vert} \sum_{y \in \mcl Z_K}  \tau_{y}u \Rr)^2\Rr] 
= \frac{1}{\vert \mcl Z_K \vert^2}  \sum_{x, y \in \mcl Z_K}  \Er\Ll[ (\tau_{x-y} u) u\Rr].
\end{multline*}
We know that for $\vert x-y \vert \geq l_u$, then the term $ \tau_{x-y}u$ and $u$ is independent so $ \Er\Ll[ (\tau_{x-y} u) u\Rr]$ = 0. This concludes \cref{eq:Variance}.
\end{proof}

\begin{proposition}\label{prop:Approximation}
There exists two finite positive numbers $C := C(d, \rho), \gamma := \gamma(d, \rho)$ such that for any ${u \in C^{\infty}_c(\mmd(\Rd))}$ supported in $Q_{l_u}$, $K \geq l_u$ and $v_t$ defined in \cref{eq:defTwoTerms}, for ${t_n \geq \max\Ll\{l_u^2, 16 \Lambda^2 \Rr\}}, t_{n+1} = Rt_n$ with $R>1$ we have 
\begin{align}\label{eq:Approximation}
(t_{n+1})^{\frac{d+2}{2}} \Er[(v_{t_{n+1}})^2] - (t_n)^{\frac{d+2}{2}} \Er[(v_{t_n})^2] \leq C (\log(t_{n+1}))^{\gamma} K^2 (l_u)^{d} \Vert u\Vert^2_{L^{\infty}} + \Er[u^2].
\end{align} 
\end{proposition}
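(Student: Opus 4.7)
The plan is to differentiate $F(t) := t^{(d+2)/2}\Er[v_t^2]$ and integrate the derivative over $[t_n,t_{n+1}]$. Since $\a$ is stationary, the generator $\L$ commutes with the spatial translations $\tau_y$, and hence with the averaging operator $\Pi_K := |\mcl Z_K|^{-1}\sum_{y \in \mcl Z_K}\tau_y$. Therefore $v_t = (I - \Pi_K)u_t$ also solves $\partial_t v_t = \L v_t$, so testing \cref{eq:Evolution} with $v = v_t$ gives the energy identity $\tfrac{d}{dt}\Er[v_t^2] = -2\Diri^{\a}(v_t)$. Consequently,
\begin{equation*}
F'(t) \;=\; \tfrac{d+2}{2}\, t^{d/2}\, \Er[v_t^2] \;-\; 2\, t^{(d+2)/2}\, \Diri^{\a}(v_t),
\end{equation*}
and the problem reduces to integrating the first (positive) term from $t_n$ to $t_{n+1}$; the second, non-positive term can be dropped or kept for absorption when needed.

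The heart of the matter is a \emph{spectral inequality} at scale $K$, to be proven in the toolbox section. Heuristically, $v_t$ is the deviation of $u_t$ from its spatial average over $\mcl Z_K$, so a discrete Poincar\'e on the lattice gives $\Er[v_t^2] \lesssim K^2 \cdot (\text{Dirichlet-form-like quantity})$. The differences $\tau_{y+e_k}u_t - \tau_y u_t$ arising from the Poincar\'e must be converted to the particle-wise gradients $\nabla u_t$: this is done through the fundamental theorem of calculus along a continuous path that shifts every particle by $e_k$, followed by a Cauchy--Schwarz over the number of displaced particles. A naive bound would produce a factor of $(l_u + \sqrt{t})^d$ coming from the effective support of $u_t$, which is too weak. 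The combinatorial coarse-graining sketched around \cref{eq:PerTwoa} recovers the sharper factor $(l_u)^d$ by partitioning $\mcl Z_K$ into sub-blocks and exploiting that only configurations near $\supp(u)$ (a set of volume $\sim (l_u)^d$) drive the fluctuations of $v_t$.

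To make the Cauchy--Schwarz over particles rigorous, $u_t$ must be truncated to a cube of finite size. I would apply the localization estimate \cref{eq:LocalIntro} at scale $L \sim \sqrt{t}\,(\log t)^{\alpha}$, replacing $u_t$ by $\A_L u_t$ with an error that is super-polynomially small in $t$; this truncation is the source of the logarithmic factor $(\log t_{n+1})^{\gamma}$ in the final bound. Combining the spectral inequality, the localization estimate, and the energy dissipation identity
\begin{equation*}
\int_{t_n}^{t_{n+1}} \Diri^{\a}(u_t)\, dt \;=\; \tfrac{1}{2}\,\Ll(\Er[u_{t_n}^2] - \Er[u_{t_{n+1}}^2]\Rr) \;\le\; \tfrac{1}{2}\,\Er[u^2]
\end{equation*}
then yields \cref{eq:Approximation}; the residual $\Er[u^2]$ term on the right-hand side arises naturally from this identity (or from a boundary contribution at $t = t_n$). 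I expect the main obstacle to be the combinatorial coarse-graining step: carefully pairing the lattice translates $\tau_y u_t$ so that only $O((l_u)^d)$ particles contribute to the sums is the key improvement over the $(l_u)^{5d}$ scaling implicit in \cite{janvresse1999relaxation}, and it requires delicate bookkeeping to retain the sharp exponent.
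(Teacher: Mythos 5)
Your high-level skeleton (differentiate $F(t)=t^{(d+2)/2}\Er[v_t^2]$, use a Poincar\'e-type inequality to absorb the positive term into the Dirichlet dissipation, bring in the localization estimate, worry about a coarse-graining loss) matches the paper's, but the concrete mechanism you propose for the ``heart of the matter'' would not close, and the resulting proof is genuinely different from (and weaker than) what the paper does.

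The main gap is your treatment of the shift differences. You propose to bound $\Er[v_t^2]$ by $K^2$ times a Dirichlet-type quantity via a discrete Poincar\'e and then to convert $\tau_{e_k}u_t-u_t$ into particle-wise gradients by a fundamental-theorem-of-calculus path that displaces every particle, finishing with Cauchy--Schwarz over the displaced particles. At time $t$ the function $u_t$ has effective support of diameter $\sim\sqrt{t}$, so the Cauchy--Schwarz produces a factor of the number of displaced particles, $\sim(\sqrt{t})^d$, not $(l_u)^d$, and this ruins exactly the dependence you were trying to sharpen. The paper circumvents this by a chain of ideas your sketch omits: (i) after localizing $v_t$, it introduces the \emph{density conditioning} $\B_{L,l}v_t = \Er[v_t\mid\M_{L,l}]$ and a Chernoff bound for the $\delta$-good configurations; (ii) the spectral inequality (\Cref{prop:AB}) is applied to $\A_L v_t - \B_{L,l}v_t$ at the \emph{small} scale $l$ with $(d+2)R_0 l^2 \leq t_n$, so the absorption into $-2t^{(d+2)/2}\Diri^{\a}(v_t)$ actually works, whereas a Poincar\'e at scale $K\geq\sqrt{t}$ would give a factor too large to absorb once $d\geq 2$; (iii) for the surviving $\B_{L,l}v_t(M)$ term it writes the conditional expectation as $\Er[g_M\cdot]$ with $g_M$ the Radon--Nikodym derivative and, crucially, uses \emph{reversibility} of $P_t$ to transfer the semigroup from $u_t$ to $g_{M,t}=P_tg_M$, reducing the question to $\Er[g_{M,t}(u-\tau_h u)]$ with $u$ at time $0$, where the support really is $Q_{l_u}$; (iv) the conversion of the shift into a Dirichlet form is then done by the eigenvalue perturbation argument (\Cref{prop:Perturbation}, going back to Lemma~4.2 of \cite{janvresse1999relaxation}), \emph{not} by fundamental-theorem-of-calculus plus Cauchy--Schwarz, which is precisely what produces the factor $(l_k\Vert u\Vert_{L^\infty})^2$ rather than a factor growing in the number of particles; (v) the accumulated Dirichlet cost $\int\Diri(\sqrt{g_{M,t}})\,\d t$ is controlled by the \emph{entropy inequality} (\Cref{lem:Entropy}); and (vi) a separate covering/re-averaging step (Step~5) fixes an overcounting in the lattice telescoping. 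None of (i)--(vi) appears in your sketch, yet each is load-bearing.

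Two smaller inaccuracies: the residual $\Er[u^2]$ in \cref{eq:Approximation} comes from integrating the localization error $(d+2)t^{d/2-\gamma}\Er[(v_0)^2]$ (choosing $\gamma>(d+4)/2$), not from the dissipation identity $\int\Diri^{\a}(u_t)\,\d t\leq\tfrac12\Er[u^2]$ that you cite; and the negative term $-2t^{(d+2)/2}\Diri^{\a}(v_t)$ cannot simply ``be dropped''---its retention is what makes Step~3 of the paper work.
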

\begin{proof}[Proof of \Cref{thm:main} from \Cref{prop:Approximation} and \Cref{prop:Variance}]
For the case $t \leq (l_u)^2$ or ${t \leq 16 \Lambda^2}$, the right hand side of \cref{eq:main} is larger than $\Er[u^2]$ and we can use the $L^2$ decay to prove the theorem. Thus without loss of generality, we set ${t_0 := \max\Ll\{l_u^2, 16 \Lambda^2 \Rr\}}$ and put \cref{eq:Approximation} into \cref{eq:main} by setting $K:= \sqrt{t_{n+1}}$ that
\begin{equation}\label{eq:FixError}
\begin{split}
 &\Er[(u_{t_{n+1}})^2] \\
\leq & 2\Er[(v_{t_{n+1}})^2] + 2\Er[(w_{t_{n+1}})^2] \\
\leq & 2 \left(\frac{t_n}{t_{n+1}}\right)^{\frac{d+2}{2}}\Er[(v_{t_{n}})^2] + 2 (t_{n+1})^{-\frac{d+2}{2}}\left(C (\log(t_{n+1}))^{\gamma} t_{n+1} (l_u)^{d} \Vert u\Vert^2_{L^{\infty}} + \Er[u^2]\right) \\
& \qquad \qquad + 2\Er[(w_{t_{n+1}})^2]\\
\leq & 4 \left(\frac{t_n}{t_{n+1}}\right)^{\frac{d+2}{2}}\Er[(u_{t_{n}})^2] +  2 (t_{n+1})^{-\frac{d+2}{2}}\left(C (\log(t_{n+1}))^{\gamma} t_{n+1} (l_u)^{d} \Vert u\Vert^2_{L^{\infty}} + \Er[u^2]\right)\\
& \qquad + 4 \left(\frac{t_n}{t_{n+1}}\right)^{\frac{d+2}{2}}\Er[(w_{t_{n}})^2] + 2\Er[(w_{t_{n+1}})^2].
\end{split}
\end{equation}
We set $U_n = (t_n)^{\frac{d}{2}}\Er[(u_{t_{n}})^2]$ and put
\cref{eq:Variance} into the equation above, we have 
\begin{align*}
U_{n+1} \leq \theta U_{n} + C_2 \Ll((\log(t_{n+1}))^{\gamma} (l_u)^{d} \Vert u\Vert^2_{L^{\infty}} + (t_{n+1})^{-1} \Er[u^2]\Rr) + C_3 (l_u)^{d}\Er[u^2] ,
\end{align*}
where $\theta = 4R^{-1}$. By choosing $R$ large such that $\theta \in (0,1)$, we do a iteration for the equation above to obtain that 
\begin{align*}
U_{n+1} &\leq \sum_{k=1}^n\Ll(C_2 \Ll((\log(t_{n+1}))^{\gamma} (l_u)^{d} \Vert u\Vert^2_{L^{\infty}} +  \Er[u^2]\Rr) + C_3 (l_u)^{d}\Er[u^2]\Rr)\theta^{n-k} + U_0 \theta^{n+1} \\
&\leq \frac{1}{1-\theta}\Ll(C_2 \Ll((\log(t_{n+1}))^{\gamma} (l_u)^{d} \Vert u\Vert^2_{L^{\infty}} +  \Er[u^2]\Rr) + C_3 (l_u)^{d}\Er[u^2]\Rr) + (l_u)^{d}\Er[u^2] \\
\Longrightarrow  & \Er[(u_{t_{n+1}})^2] \leq C_4( \log(t_{n+1}) )^{\gamma}\Ll(\frac{l_u}{\sqrt{t_{n+1}}}\Rr)^d \Vert u \Vert^2_{L^{\infty}}.
\end{align*}
\end{proof}
\begin{remark}\label{rmk:Error}
We remark that there is a small error in the similar argument in \cite[Proof of Proposition 2.2]{janvresse1999relaxation}: the authors apply \cref{eq:Approximation} from $t_0$ to $t_n$, and they neglect the change of scale in $K$ at the endpoints $\{t_{n}\}_{n \geq 0}$. However, it does not harm the whole proof and we fix it here: we add one more step of decomposition in \cref{eq:FixError}, and put the iteration directly in $u_t$ instead of $v_t$, which avoids the problem of the changes of $K$.
\end{remark}

\subsection{Error for the approximation}\label{subsec:Approximation}
In this part, we prove \Cref{prop:Approximation}. The proof can be divided into 6 steps.

\begin{proof}[Proof of \Cref{prop:Approximation}].
\textit{Step 1: Setting up.}
To shorten the equation, we define
\begin{align}
\Delta_n := (t_{n+1})^{\frac{d+2}{2}} \Er[(v_{t_{n+1}})^2] - (t_n)^{\frac{d+2}{2}} \Er[(v_{t_n})^2],
\end{align}
and it is the goal of the whole subsection. In the step setting up, we do derivative for the flow $t^{\frac{d+2}{2}}\Er[(v_t)^2]$ that 
\begin{align}\label{eq:ApproxDerivative}
\Delta_n = \int_{t_n}^{t_{n+1}}\left(\frac{d+2}{2}\right)t^{\frac{d}{2}}\Er[(v_t)^2] - 2t^{\frac{d+2}{2}}\Er[v_t (-\L v_t)] \, \d t.
\end{align}

\textit{Step 2: Localization.}
We set $\A_L v_t := \E\Ll[v_t | \mcl F_{Q_L} \Rr]$ and use it to approximate $v_t$ in $L^2$. Since it is a diffusion process, one can guess naturally a scale larger than $\sqrt{t}$ will have enough information for this approximation. In \Cref{thm:localization} we prove an estimate 
\begin{align*}
\Er\Ll[(v_t - \A_L v_t)^2\Rr] \leq C(\Lambda)\exp\Ll(-\frac{L}{\sqrt{t}}\Rr)\Er\Ll[(v_0)^2\Rr],
\end{align*}
and we choose $L = \lfloor \gamma \log(t_{n+1})\rfloor \sqrt{t_{n+1}}, \gamma > \frac{d+4}{2}$ here, and put it back to \cref{eq:ApproxDerivative} to obtain 
\begin{equation}\label{eq:ApproxLocalization}
\begin{split}
\Delta_n \leq &\int_{t_n}^{t_{n+1}}(d+2)t^{\frac{d}{2}}\Er\Ll[\Ll(\A_L v_t\Rr)^2\Rr]+ (d+2)t^{\frac{d}{2}-\gamma} \Er\Ll[(v_0)^2\Rr] - 2t^{\frac{d+2}{2}}\Er[v_t (-\L v_t)] \, \d t \\
\leq & \Er[(u_0)^2] + \int_{t_n}^{t_{n+1}}(d+2)t^{\frac{d}{2}}\Er\Ll[\Ll(\A_L v_t\Rr)^2\Rr] - 2t^{\frac{d+2}{2}}\Er[v_t (-\L v_t)] \, \d t . 
\end{split}
\end{equation}

\textit{Step 3: Approximation by density.}
We apply a second approximation: we choose another scale $l>0$, whose value will be fixed but $L/l \in \mathbb{N}$ and $l \simeq \sqrt{t_{n+1}}$. We denote by $q = (L/l)^d$ and $\M_{L,l} = (\M_1, \M_2 \cdots \M_q)$ a random vector, where $\M_i$ is the number of the particle in $i$-th cube of scale $l$. Then we define an operator 
\begin{align*}
\B_{L,l} v_t := \Er\Ll[v_t \vert \M_{L,l} \Rr].
\end{align*}
The main idea here is that the random vector $\M_{L,l}$ captures the information of convergence, once we know the density in every cube of scale $l \simeq \sqrt{t_{n+1}}$ converges to $\rho$. In \Cref{prop:AB} we will prove a spectral inequality that 
\begin{align*}
\Er\Ll[(\A_L v_t - \B_{L,l} v_t)^2\Rr] \leq R_0 l^2 \Er\Ll[v_t (-\L v_t)\Rr].
\end{align*}
We put this estimate into \cref{eq:ApproxLocalization} 
\begin{align*}
\Delta_n \leq & \Er[(u_0)^2] + \int_{t_n}^{t_{n+1}} 2(d+2)t^{\frac{d}{2}}\Er\Ll[\Ll(\B_{L,l} v_t\Rr)^2\Rr] + 2t^{\frac{d}{2}}((d+2)R_0 l^2 -t)\Er[v_t (-\L v_t)] \, \d t \\
\leq & \Er[(u_0)^2] + \int_{t_n}^{t_{n+1}} 2(d+2)t^{\frac{d}{2}}\Er\Ll[\Ll(\B_{L,l} v_t\Rr)^2\Rr] \, \d t, 
\end{align*}
where we obtain the last line by choosing a scale $l = c \sqrt{t_{n+1}}$ such that $(d+2)R_0 l^2 \leq t_n $ and $L/l \in \mathbb{N}$. 

It remains to estimate how small $\Er\Ll[\Ll(\B_{L,l} v_t\Rr)^2\Rr]$ is. The typical case is that the density is close to $\rho$ in every cube of scale $l$ in $Q_L$. Let us define $M = (M_1, M_2, \cdots M_q)$, and we have 
\begin{align*}
\B_{L,l} v_t(M) = \Er\Ll[v_t \vert \M_{L,l} = M \Rr]. 
\end{align*}
Then we call $\mcl C_{L,l,\rho,\delta}$ the $\delta$-good configuration that 
\begin{align}
\mcl C_{L,l,\rho,\delta} := \Ll\{M \in \mathbb{N}^{q} \Ll\vert \forall 1 \leq i\leq q, \Ll\vert\frac{M_i}{\rho\vert Q_l \vert} - 1\Rr\vert \leq \delta \Rr.\Rr\}.
\end{align}
We can use standard Chernoff bound and union bound to prove the upper bound of $\Pr\Ll[\M_{L,l} \notin \mcl C_{L,l,\rho,\delta} \Rr]$: for any $\lambda > 0$, we have 
\begin{align*}
\Pr\Ll[\exists \leq i\leq q, \frac{\M_i}{\rho\vert Q_l \vert}  \geq 1+\delta \Rr] &\leq \Ll(\frac{L}{l}\Rr)^d \exp(-\lambda(1+\delta))\Er\Ll[\exp\Ll(\frac{\lambda\mu(Q_l)}{\rho \vert Q_l\vert}\Rr)\Rr] \\
&=\Ll(\frac{L}{l}\Rr)^d \exp\Ll(-\lambda(1+\delta)+ \rho \vert Q_l\vert \Ll(e^{\frac{\lambda }{\rho \vert Q_l\vert}}-1\Rr)\Rr) \\
&\leq \Ll(\frac{L}{l}\Rr)^d \exp\Ll(-\lambda \delta + \frac{\lambda^2}{\rho \vert Q_l\vert}\Rr).
\end{align*}
In the second line we use the exact Laplace transform for $\mu(Q_l)$ as we know ${\mu(Q_l) \stackrel{\text{law}}{\sim} \text{Poisson}(\rho \vert Q_l \vert)}$. Then we do optimization by choosing $\lambda = \frac{\delta \rho \vert Q_l \vert}{2}$. The other side is similar and we conclude
\begin{align}
\Pr\Ll[\M_{L,l} \notin \mcl C_{L,l,\rho,\delta} \Rr] \leq \Ll(\gamma \log(t_{n+1})\Rr)^d \exp\Ll(-\frac{\rho \vert Q_l\vert \delta^2}{4}\Rr).
\end{align}
For the case $M \notin \mcl C_{L,l,\rho,\delta}$, we can bound $\B_{L,l} v_t(M)$ naively by $\vert \B_{L,l} v_t(M) \vert \leq C\Vert u_0\Vert_{L^{\infty}}$, thus we have 
\begin{align*}
\Er\Ll[\Ll(\B_{L,l} v_t\Rr)^2\Rr] \leq \sum_{M\in \mcl C_{L,l,\rho,\delta}} \Pr[\M_{L,l} = M] (\B_{L,l} v_t(M))^2 + \Ll(\gamma \log(t_{n+1})\Rr)^d \exp\Ll(-\frac{\rho \vert Q_l\vert \delta^2}{4}\Rr)\Vert u_0\Vert^2_{L^{\infty}} 
\end{align*}
and we finish this step by 
\begin{equation}\label{eq:ApproxDensity}
\begin{split}
\Delta_n \leq & \Er[(u_0)^2] +  (t_{n+1})^{\frac{d+2}{2}}\Ll(\gamma \log(t_{n+1})\Rr)^d \exp\Ll(-\frac{\rho \vert Q_l\vert \delta^2}{4}\Rr)\Vert u_0\Vert^2_{L^{\infty}} \\
& \qquad + \sum_{M\in \mcl C_{L,l,\rho,\delta}} \Pr[\M_{L,l} = M] \int_{t_n}^{t_{n+1}} 2(d+2)t^{\frac{d}{2}} (\B_{L,l} v_t(M))^2 \, \d t.
\end{split}
\end{equation}
We remark that the parameter $\delta > 0$ will be fixed at the end of the proof.

\textit{Step 4: Perturbation estimate.}
It remains to estimate the term $(\B_{L,l} v_t(M))^2$ for the the $\delta$-good configuration. Now we put the expression of $v_t$ in and obtain 
\begin{align*}
(\B_{L,l} v_t(M))^2 &= \Ll(\frac{1}{\vert \mcl Z_K \vert} \sum_{y \in \mcl Z_K }  (\B_{L,l} (u_t - \tau_{y} u_t))(M)\Rr)^2,
\end{align*}
and our aim is to control 
\begin{equation}\label{eq:PerTwo}
\begin{split}
\int_{t_n}^{t_{n+1}} 2(d+2)t^{\frac{d}{2}} \Ll(\frac{1}{\vert \mcl Z_K  \vert} \sum_{y \in \mcl Z_K }  (\B_{L,l} (u_t - \tau_{y} u_t))(M)\Rr)^2 \, \d t .
\end{split}
\end{equation}

To treat \text{\cref{eq:PerTwo}}, we calculate the Radon-Nikodym derivative that 
\begin{align}\label{eq:defgM}
g_M := \frac{\d \Pr[\cdot \vert \M_{L,l} = M]}{\d \Pr} = \frac{1}{\Pr[\M_{L,l} = M]]} \Ind{\M_{L,l} = M]}.
\end{align}
Then we use the reversibility of the semigroup $P_t$ and denote by $g_{M,t} := P_t g_M$
\begin{align*}
\B_{L,l}(u_t - \tau_y u_t)(M) = \Er[g_M  (u_t - \tau_y u_t)] = \Er\Ll[g_{M,t}  (u - \tau_y u)\Rr]. 
\end{align*}
Then we would like to apply the a perturbation estimate \Cref{prop:Perturbation} to control it: let $l_k := l_u + 2k$ then for any $\vert y \vert \leq k$, we have 
\begin{align*}
\Er[g_M  (u_t - \tau_y u_t)] \leq  C(d) (l_k \Vert u \Vert_{L^\infty})^2  \Diri_{Q_{l_k}}(\sqrt{g_M}),
\end{align*}
where $\Diri_{Q_{l_k}}(\sqrt{g_M})$ is a localized Dirichlet form defined in \cref{eq:DirichletLocal}. A heuristic analysis of order is $\Diri_{Q_{l_k}}(\sqrt{g_M}) \simeq O\Ll((l_k)^d\Rr)$ since it is a Dirichlet form on $Q_{l_k}$. If we choose $k = K$ here to cover all the term, the bound will be of order $O(K^d)$, which is big when $K \simeq \sqrt{t} \geq l_u$. Therefore, we apply a coarse-graining argument: let $\overline{[0,y]}_k := \{z_i\}_{0 \leq i \leq n(y)}$ be a lattice path that of scale $k$, ${z_0 = 0}, {z_{n(y)} = y},{\{z_i\}_{1 \leq i < n(y)} \in (k\mathbb{Z})^d}$ so the length of path is the shortest one. (See \Cref{fig:coarse-graining} for illustration.) Then we have 
\begin{align*}
(u - \tau_y u) = \sum_{i=0}^{n(y) - 1} (\tau_{z_i} u - \tau_{z_{i+1}} u) = \sum_{i=0}^{n(y) - 1} \tau_{z_i} (u - \tau_{h_{z_i}}u),
\end{align*}
where $h_{z_i} = z_{i+1} - z_i$ the vector connecting the two and $\vert h_{z_i} \vert \leq k$. This expression with the transport invariant law of Poisson point process, Cauchy-Schwartz inequality implies
\begin{equation}\label{eq:Telescope}
\begin{split}
\Ll(\B_{L,l}(u_t - \tau_y u_t)(M)\Rr)^2 &= \Ll(\sum_{z \in \overline{[0,y]}_k} \Er\Ll[g_{M,t}  \tau_z(u - \tau_{h_z} u)\Rr]\Rr)^2 \\
&= \Ll(\sum_{z \in \overline{[0,y]}_k} \Er\Ll[\Ll(\tau_{-z}g_{M,t}  \Rr)(u - \tau_{h_z} u)\Rr]\Rr)^2 \\
&\leq C(d) n(y) \sum_{z \in \overline{[0,y]}_k} \Ll(\Er\Ll[\Ll(\tau_{-z}g_{M,t}\Rr)  (u - \tau_{h_z} u)\Rr]\Rr)^2.
\end{split}
\end{equation}
This term appears a perturbation estimate, which will be proved in \Cref{prop:Perturbation} that 
\begin{align*}
\Ll(\Er\Ll[\Ll(\tau_{-z}g_{M,t}\Rr)  (u - \tau_{h_z} u)\Rr]\Rr)^2 &\leq C(d) (l_k \Vert u \Vert_{L^\infty})^2  \Diri_{Q_{l_k}}\Ll(\sqrt{\tau_{-z}g_{M,t}}\Rr) \\
& = C(d) (l_k \Vert u \Vert_{L^\infty})^2  \Diri_{\tau_z Q_{l_k}}\Ll(\sqrt{g_{M,t}}\Rr),
\end{align*}
where in the last step we use the transport invariant property of Poisson point process. Now we turn to the choice of the scale $k$. By the heuristic analysis that every $\Diri_{Q_{l_k}}$ contributes order $O((l_k)^d)$ and taking in account $n(y) \leq K/k$ we have in \cref{eq:Telescope}
\begin{align*}
\Ll(\B_{L,l}(u_t - \tau_y u_t)(M)\Rr)^2 \simeq  O\Ll(\Ll(\frac{K}{k}\Rr)^2 (l_k)^{d+2}\Rr) \simeq  O\Ll(\Ll(\frac{K}{k}\Rr)^2 (l_u + 2k)^{d+2}\Rr).
\end{align*}
From this we see that a good scale should be $k = l_u$ so the term above is of order $O(K^2(l_u)^d)$. We put these estimate back to \cref{eq:PerTwo}
\begin{multline}\label{eq:PerTwoa}
\text{\cref{eq:PerTwo}} \leq \Vert u \Vert^2_{L^\infty} \int_{t_n}^{t_{n+1}} 2(d+2)t^{\frac{d}{2}}  K l_u \Ll(\frac{1}{\vert \mcl Z_K \vert}\sum_{y \in \mcl Z_K}   \sum_{z \in \overline{[0,y]}_{l_u}} \Diri_{\tau_z Q_{3 l_u}}\Ll(\sqrt{g_{M,t}}\Rr)\Rr)\, \d t .
\end{multline}

\begin{figure}[h]
\centering
\includegraphics[scale=0.45]{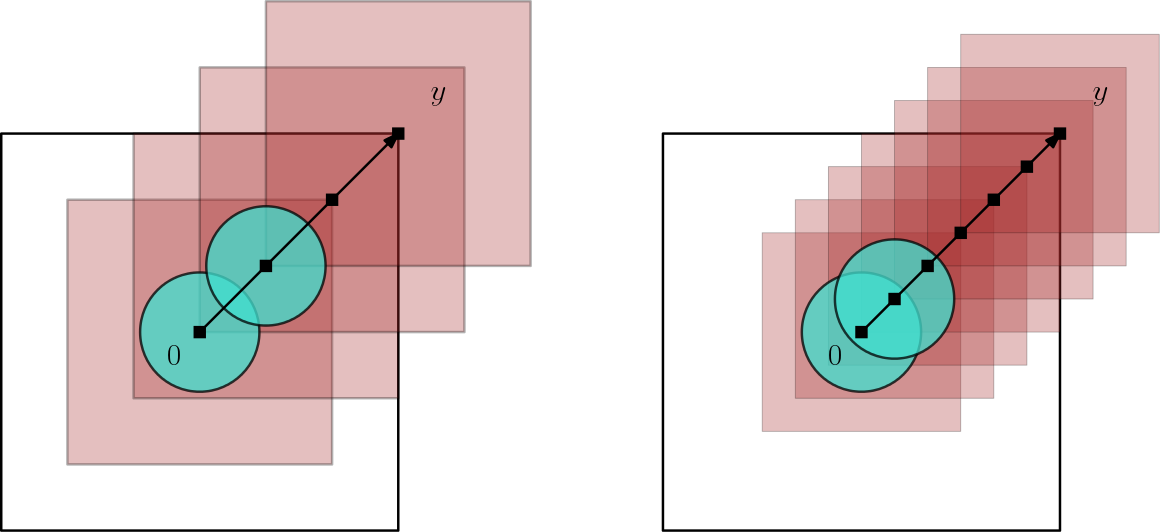}
\caption{The illustration of the coarse-graining argument, where we take a lattice path of scale $k$ to connect $0$ and $y$. The ball in blue is the support of $u$ and the box in red is $Q_{l_k}$. For the one on the left, the scale is $k = l_u$; the one on the right the scale is finer and we see that the coarse-graining is too dense. }\label{fig:coarse-graining}
\end{figure}

\textit{Step 5: Covering argument.} In this step, we calculate the  right hand side of \cref{eq:PerTwoa}, where we notice one essential problem: there are totally about $K^{d+1}/l_u$ terms of Dirichlet form $\Diri_{\tau_z Q_{3 l_u}}\Ll(\sqrt{g_{M,t}}\Rr)$ in the sum $\sum_{y \in \mcl Z_K} \sum_{z \in \overline{[0,y]}_{l_u}} \Diri_{\tau_z Q_{l_u}}\Ll(\sqrt{g_{M,t}}\Rr)$, but the one with $z$ close to $0$ are counted of order $K^d$ times, while the one with $z$ near $\partial \mcl Z_K$ are counted only constant times. To solve this problem, we have to reaverage the sum: by the transport invariant property of Poisson point process, at the beginning of the Step 1, we can write 
\begin{align*}
\Delta_n = \frac{1}{\vert \mcl Z_l \vert}\sum_{x \in \mcl Z_l}\Ll((t_{n+1})^{\frac{d+2}{2}} \Er[(\tau_x v_{t_{n+1}})^2] - (t_n)^{\frac{d+2}{2}} \Er[(\tau_x v_{t_n})^2]\Rr).
\end{align*}
Then all estimates works in Step 1, Step 2 and Step 3 work by replacing ${v_t \mapsto \tau_x v_t}$ and ${u_t \mapsto \tau_x u_t}$. In the Step 4, this operation will change our object term \cref{eq:PerTwo} 
\begin{align*}
\text{\cref{eq:PerTwo}-avg} = \int_{t_n}^{t_{n+1}} 2(d+2)t^{\frac{d}{2}} \Ll(\frac{1}{\vert \mcl Z_l \vert}\sum_{w \in \mcl Z_l}\frac{1}{\vert \mcl Z_K \vert}\sum_{y \in \mcl Z_K} (\B_{L,l} \tau_w(u_t - \tau_y u_t)(M))^2\Rr) \, \d t,
\end{align*}
and the perturbation argument \Cref{prop:Perturbation} reduces the problem as 
\begin{multline}\label{eq:PerTwoaNew}
\text{\cref{eq:PerTwo}-avg} \leq \Vert u \Vert^2_{L^\infty} \int_{t_n}^{t_{n+1}} 2(d+2)t^{\frac{d}{2}}K l_u \\
\qquad \qquad \times \Ll(\frac{1}{\vert \mcl Z_l \vert}\sum_{w \in \mcl Z_l} \frac{1}{\vert \mcl Z_K \vert}\sum_{y \in \mcl Z_K}   \sum_{z \in \overline{[0,y]}_{l_u}} \Diri_{\tau_{w+z} Q_{3 l_u}}\Ll(\sqrt{g_{M,t}}\Rr)\Rr)\, \d t .\\
\end{multline}
Now we can apply the Fubini's lemma
\begin{align*}
&\frac{1}{\vert \mcl Z_l \vert}\sum_{w \in \mcl Z_l} \frac{1}{\vert \mcl Z_K \vert}\sum_{y \in \mcl Z_K}   \sum_{z \in \overline{[0,y]}_{l_u}} \Diri_{\tau_{w+z} Q_{3 l_u}}\Ll(\sqrt{g_{M,t}}\Rr) \\
= & \frac{1}{\vert \mcl Z_l \vert} \frac{1}{\vert \mcl Z_K \vert} \Er\Ll[\int_{\Rd} \Ll(\sum_{w \in \mcl Z_l}\sum_{y \in \mcl Z_K}\sum_{z \in \overline{[0,y]}_{l_u}}\Ind{x \in \tau_{w+z} Q_{3 l_u}} \Rr) \nabla \sqrt{g_{M,t}}(\mu, x) \cdot \nabla \sqrt{g_{M,t}}(\mu, x)\, \d \mu(x)\Rr],
\end{align*}
while we notice that 
\begin{multline*}
\sum_{w \in \mcl Z_l}\sum_{y \in \mcl Z_K}\sum_{z \in \overline{[0,y]}_{l_u}}\Ind{x \in \tau_{w+z} Q_{3 l_u}} = \sum_{y \in \mcl Z_K}\sum_{z \in \overline{[0,y]}_{l_u}}\underbrace{\sum_{w \in \mcl Z_l}\Ind{x-w \in \tau_{z} Q_{3 l_u}}}_{\leq \vert Q_{3 l_u} \vert}\\
\leq  \sum_{y \in \mcl Z_K}\sum_{z \in \overline{[0,y]}_{l_u}} (3 l_u)^d \leq C(d)(l_u)^{d-1} K^{d+1},
\end{multline*}
so we have
\begin{align*}
\frac{1}{\vert \mcl Z_l \vert}\sum_{w \in \mcl Z_l} \frac{1}{\vert \mcl Z_K \vert}\sum_{y \in \mcl Z_K}   \sum_{z \in \overline{[0,y]}_{l_u}} \Diri_{\tau_{w+z} Q_{3 l_u}}\Ll(\sqrt{g_{M,t}}\Rr) \leq  \frac{C(d)(l_u)^{d-1} K}{\vert \mcl Z_l \vert} \Diri(\sqrt{g_{M,t}}).
\end{align*}
We put this estimate to \cref{eq:PerTwoaNew} and use $l = c\sqrt{t_{n+1}},$ 
\begin{equation}\label{eq:PerTwoa2}
\begin{split}
\text{\cref{eq:PerTwo}-avg} &\leq C(d) \Vert u \Vert^2_{L^\infty}K^2 (l_u)^{d} \int_{t_n}^{t_{n+1}} \Ll(\frac{t^{\frac{1}{2}}}{l}\Rr)^d \Diri(\sqrt{g_{M,t}}) \, \d t \\
&\leq  C(d) \Vert u \Vert^2_{L^\infty} K^2 (l_u)^{d}  \int_{t_n}^{t_{n+1}} \Diri(\sqrt{g_{M,t}}) \, \d t.
\end{split}
\end{equation}
We put \cref{eq:PerTwoa2} back to \cref{eq:PerTwo} and \cref{eq:ApproxDensity} and conclude
\begin{equation}
\begin{split}
\Delta_n \leq & \Er[(u_0)^2] +  (t_{n+1})^{\frac{d+2}{2}}\Ll(\gamma \log(t_{n+1})\Rr)^d \exp\Ll(-\frac{\rho \vert Q_l\vert \delta^2}{4}\Rr)\Vert u_0\Vert^2_{L^{\infty}} \\
& \qquad +  C(d) \Vert u \Vert^2_{L^\infty} K^2 (l_u)^{d}\sum_{M\in \mcl C_{L,l,\rho,\delta}} \Pr[\M_{L,l} = M] \int_{t_n}^{t_{n+1}} \Diri(\sqrt{g_{M,t}}) \, \d t.
\end{split}
\end{equation}

\textit{Step 6: Entropy inequality.}
In this step, we analyze the quantity $\int_{t_n}^{t_{n+1}} \Diri(\sqrt{g_{M,t}})\, \d t$. We recall the definition of the entropy inequality: let $H(g_M) = \Er[g_M \log(g_M)]$, then 
\begin{align}
H(g_{M,t}) = H(g_{M}) - 4\int_{0}^t \Er[\sqrt{g_{M,s}}(-\L \sqrt{g_{M,s}})]\, \d s,
\end{align}
we have 
\begin{align*}
\int_{t_n}^{t_{n+1}} \Diri(\sqrt{g_{M,t}})\, \d t \leq \int_{t_n}^{t_{n+1}} \Er[\sqrt{g_{M,t}}(-\L \sqrt{g_{M,t}})]\, \d t \leq H(g_{M,t_{n+1}}) \leq H(g_M).
\end{align*}
For any $M \in \mcl C_{L,l,\rho,\delta}$, one can calculate the bound of the entropy and we prove it in \Cref{lem:Entropy}
\begin{align*}
H(g_M) \leq C(d, \rho)\Ll(\frac{L}{l}\Rr)^d \Ll(\log(l) + l^d\delta^2\Rr).
\end{align*}
This helps us conclude that 
\begin{multline*}
\Delta_n \leq \Er[(u_0)^2] \\
+  \Vert u_0\Vert^2_{L^{\infty}} \Ll(\gamma \log(t_{n+1})\Rr)^d \Ll((t_{n+1})^{\frac{d+2}{2}}\exp\Ll(-\frac{\rho \vert Q_l\vert \delta^2}{4}\Rr) + K^2 (l_u)^{d} \Ll(\log(l) + l^d\delta^2\Rr)\Rr).
\end{multline*}
To make the bound small, we choose a parameter $\delta = c(d,\rho) (\log{t_{n+1}})^{\frac{1}{2}}(t_{n+1})^{-\frac{d}{2}}$, where $c(d,\rho)$ is a positive number large enough to compensate the term $(t_{n+1})^{\frac{d+2}{2}}$ and this proves \cref{eq:Approximation}
\end{proof}

\section{Localization estimate}\label{sec:Localization}
In this part, we prove the key localization estimate: we recall our notation of conditional expectation here that $\A_s f = \E\Ll[f | \mcl F_{Q_s} \Rr]$ for $Q_s$ a closed cube $\Ll[- \frac{s}{2}, \frac{s}{2}\Rr]^d$.
\begin{theorem}\label{thm:localization}
For $u \in L^2\Ll(\mmd(\Rd)\Rr)$ of compact support that $\supp(u) \subset Q_{l_u}$, any ${t \geq \max\Ll\{l_u^2, 16 \Lambda^2 \Rr\}}$, $K \geq \sqrt{t}$, and $u_t$ the function associated to the generator $\L$ at time $t$, then we have the estimate
\begin{equation}
\Er\Ll[(u_t - \A_K u_t)^2\Rr] \leq C(\Lambda)\exp\left(- \frac{K}{\sqrt{t}} \right)\Er\Ll[u^2\Rr].
\end{equation}
\end{theorem}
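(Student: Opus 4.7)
I would adapt the Davies–Gaffney exponential-weight method used for the lattice zero-range model in \cite{janvresse1999relaxation} and combine it with a regularization layer to accommodate the continuum configuration space. The first reduction is by duality: since $\A_K$ is the $L^2$-orthogonal projection onto the $\mcl F_{Q_K}$-measurable sector and $P_t$ is self-adjoint,
\begin{equation*}
\bigl(\Er[(u_t-\A_K u_t)^2]\bigr)^{1/2} \;=\; \sup_{\A_K\phi = 0,\,\|\phi\|_{L^2}\le 1}\Er[u\,P_t\phi].
\end{equation*}
It suffices to prove an off-diagonal semigroup bound $|\Er[uP_t\phi]| \le C(\Lambda)e^{-K/\sqrt t}\|u\|_{L^2}$ for every $\phi$ orthogonal to $\mcl F_{Q_K}$, exploiting that $u$ has support well inside $Q_K$ while $\phi$ has ``its mass'' outside $Q_K$.

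The second step is a weighted energy estimate. Pick a smooth spatial cutoff $\psi:\Rd\to[0,1]$ with $\psi\equiv 1$ on $Q_{l_u}$, $\psi\equiv 0$ outside $Q_{K/2}$ and $|\nabla\psi|\le 4/K$, and set $W_\alpha(\mu):=\exp\!\bigl(-\alpha\int_\Rd\psi\,\d\mu\bigr)\in(0,1]$ with $\alpha>0$ to be chosen. From $\nabla W_\alpha(\mu,x)=-\alpha W_\alpha(\mu)\nabla\psi(x)$ and the symmetry of $\a$, the pointwise cross-cancellation
\begin{equation*}
\nabla(W_\alpha^{-1}v)\cdot\a\,\nabla(W_\alpha v) \;=\; \nabla v\cdot\a\,\nabla v \;-\; \alpha^2\,(\nabla\psi\cdot\a\,\nabla\psi)\,v^2
\end{equation*}
holds; integrating against $\d\mu$, using $|\nabla\psi|_\a^2\le 16\Lambda K^{-2}$ and controlling $\mu(\supp\psi)$ by Poisson exponential moments, a Gronwall argument along the flow $t\mapsto \|W_\alpha^{-1}P_t W_\alpha\phi\|_{L^2}^2$ yields the Davies bound
\begin{equation*}
\bigl\|W_\alpha^{-1}P_t W_\alpha\bigr\|_{L^2\to L^2} \;\le\; \exp\!\bigl(C\Lambda\,\alpha^2\, t\, K^{-2}\bigr).
\end{equation*}
Plugging this into the dual formulation and using Cauchy–Schwarz, the target bound reduces to an interplay between $\|uW_\alpha\|_{L^2}$, $\|W_\alpha^{-1}\phi\|_{L^2}$, and the exponential growth $\exp(C\Lambda\alpha^2 t K^{-2})$; the orthogonality $\A_K\phi=0$ combined with the $\mcl F_{Q_{K/2}}$-measurability of $W_\alpha$ is needed to close the chain of inequalities without spurious Poisson volume factors. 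Optimizing $\alpha \asymp K/(C\sqrt t)$ balances growth against gain and produces the factor $e^{-K/\sqrt t}$. The constraints $t\ge\max\{l_u^2, 16\Lambda^2\}$ and $K\ge\sqrt t$ ensure that $l_u\le K/2$ (so that the cutoff $\psi$ is well-defined) and that $\alpha$ lands in the range where the Davies error term is a pure constant.

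The main obstacle, and the genuinely new piece compared to the lattice setting of \cite{janvresse1999relaxation}, is the regularity mismatch: $\A_K u_t\notin\D(\Diri^\a)$ in general, and the twisted objects $W_\alpha^{\pm 1}\phi$ are not a priori admissible test functions, so the preceding computations must be justified by approximation. I would regularize $\A_K$ by composing with a short-time auxiliary regularizing semigroup acting only within $Q_K$ (so that the mollified projection lies in $\D(\Diri^\a)$), perform the Davies computations for these mollified objects, and pass to the limit using the uniform-in-regularization bounds from the twisted-semigroup estimate. Constructing a regularization that commutes cleanly with the spatial weight $W_\alpha$ and with the $\mcl F_{Q_K}$-conditioning is where I expect the bulk of the technical work in \Cref{sec:Localization} to reside.
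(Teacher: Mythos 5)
You take a genuinely different route --- a Davies--Gaffney exponential-weight argument --- from the paper's multi-scale functional approach, but there is a structural gap in the twisted-semigroup bound that I do not believe can be closed as written. Set $w:=W_\alpha^{-1}P_t(W_\alpha\phi)$ and differentiate $\Er[w^2]$ along the semigroup. Your pointwise cross-cancellation is correct and leaves the remainder
\begin{equation*}
\frac{d}{dt}\Er[w^2] \ \le\ 2\alpha^2\,\Er\Ll[w^2\int_{\Rd}\nabla\psi\cdot\a\,\nabla\psi\,\d\mu\Rr].
\end{equation*}
The crucial difference from the Euclidean Davies--Gaffney estimate is that the $x$-integral is against $\d\mu(x)$, not Lebesgue measure, and $w$ depends only on $\mu$. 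The remainder therefore contains the random factor $\mu(\supp\nabla\psi)$, which is Poisson of mean of order $\rho K^d$ (the annulus $Q_{K/2}\setminus Q_{l_u}$ has volume $\sim K^d$) and is \emph{not} independent of $w^2$. ``Controlling $\mu(\supp\psi)$ by Poisson exponential moments'' does not produce the Gronwall inequality $\frac{d}{dt}\Er[w^2]\le C\Lambda\alpha^2K^{-2}\Er[w^2]$: at best it yields a volume factor $\sim\rho K^d$ on the right-hand side, which after optimizing $\alpha\asymp K/\sqrt t$ puts $K^d/t$ inside the exponential and destroys the bound. This is a genuine obstruction, not a regularity technicality: in an infinite particle system the weight penalty is summed over all particles sitting where $\nabla\psi\ne 0$, and that count is unbounded and scales with the volume of the cutoff region.

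The paper sidesteps this entirely and never pays a volume factor. It introduces the multi-scale functional $S_{k,K,\beta}(f) = \alpha_K\Er[f^2] - \int_k^K\alpha_s'\,\Er[(\A_s f)^2]\,\d s$ with $\alpha_s=e^{s/\beta}$, and differentiates it along $t\mapsto u_t$. The central estimate bounds $\Er[\tilde{\A}_{s,\epsilon}u_t(-\L u_t)]$ by telescoping increments $I^{u_t}_{s+\epsilon}-I^{u_t}_{s}$ of localized Dirichlet energies plus a term controlled by the quadratic variation of the spatial c\`adl\`ag martingale $s\mapsto\A_s f$. The jumps of that martingale occur precisely when a particle sits on $\partial Q_s$, almost surely one at a time (\Cref{lem:OneParticle}), and the bracket process accounts exactly for boundary crossings with no $K^d$ factor appearing anywhere. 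Your diagnosis of the regularity obstruction ($\A_K u_t\notin\D(\Diri^\a)$) is correct, but the paper resolves it by the time-averaged regularization $\A_{s,\epsilon}f=\frac{1}{\epsilon}\int_0^\epsilon\A_{s+r}f\,\d r$, not an auxiliary semigroup. Finally, the attribution is off: \cite[Prop.~3.1]{janvresse1999relaxation} also tests the Dirichlet form against $\A_K u_t$ rather than with exponential weights; the present paper is adapting exactly that conditional-expectation scheme to the continuum, where the extra work lies in making $\A_K u_t$ an admissible test function.
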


This is an important inequality which allows us to pay some error to localize the function, and it is introduced  in \cite{janvresse1999relaxation} and also used in \cite{giunti2019heat}. The main idea to prove it is to use a multi-scale functional and analyze its evolution with respect to the time. Let us introduce its continuous version: for any $f \in L^2(\mmd(\Rd))$, $f \mapsto \Ll(\A_s f \Rr)_{s \geq 0}$ is a c\`adl\`ag $L^2$-martingale with respect to $\Ll(\Omega, \Ll(\mcl F_{Q_s}\Rr)_{s \geq 0}, \P\Rr)$. 

Our multi-scale functional for  $f \in H^1_0(\mmd(\Rd))$ is defined as 
\begin{equation}\label{eq:FunctionalMultiscaleContinous}
S_{k,K,\beta}(f) = \alpha_k \Er\Ll[(\A_k f)^2\Rr] + \int_{k}^K \alpha_s \,d \Er\Ll[(\A_s f)^2\Rr] + \alpha_K \Er\Ll[ (f - \A_K f)^2\Rr],
\end{equation}
with $\alpha_s = \exp\Ll(\frac{s}{\beta}\Rr), \beta > 0$. We can apply the integration by part formula for the Lebesgue-Stieltjes integral and obtain 
\begin{equation}\label{eq:FunctionalMultiscaleContinous2}
S_{k,K,\beta}(f) = \alpha_K \Er\Ll[f^2\Rr] -  \int_{k}^K \alpha'_s \Er\Ll[(\A_s f)^2\Rr]\, ds,
\end{equation}
where $\alpha'_s$ is the derivative with respect to $s$. The main idea is to put $u_t$ in \cref{eq:FunctionalMultiscaleContinous2} and then study its derivative $\frac{d}{dt} S_{k,K,\beta}(u_t)$ and use it to prove \Cref{thm:localization}. In this procedure, we will use the Dirichlet form for $\A_s u_t$, but we have to remark that in fact we do not know \`a priori this is a function in $ H^1_0(\mmd(\Rd))$. We will give a counter example to make it more clear in the next section and introduce a regularized version of $\A_s f$ to pass this difficulty.

\subsection{Conditional expectation, spatial martingale and its regularization}
$\Ll(\A_s f \Rr)_{s \geq 0}$ has nice property: we can treat it as a localized function or a martingale. Thus we use the notation 
\begin{align}
\mt^f_s := \A_s f,
\end{align}
which is a more canonical notation in martingale theory. In this subsection, we would like to understand the regularity of the closed martingale $\Ll(\mt^f_s \Rr)_{s \geq 0}$. We will see it is a c\`adl\`ag $L^2$-martingale and the jump happens when there is particles on the boundary $\partial Q_s$. At first, we remark a useful property for Poisson point process.

\begin{lemma}\label{lem:OneParticle}
With probability $1$, for any $0 < s < \infty$, there is at most one particle one the boundary $\partial Q_s$.
\end{lemma}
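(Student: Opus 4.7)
The plan is to reduce the simultaneous-in-$s$ statement to a pairwise-distinctness claim about a countable family of random scalars attached to the particles. For every $x \in \Rd \setminus \{0\}$ set $s(x) := 2 \|x\|_\infty$; then $x \in \partial Q_s$ if and only if $s = s(x)$, so each particle touches exactly one boundary scale. Since a.s.\ every atom of $\mu$ has mass one and a.s.\ no atom lies at the origin (both being standard facts for a Poisson point process with absolutely continuous intensity), the event
\[
\bigl\{\exists\, s \in (0,\infty) : \mu(\partial Q_s) \geq 2\bigr\}
\]
coincides with
\[
\bigl\{\exists\, x \neq y \in \supp(\mu) : s(x) = s(y)\bigr\}.
\]
It suffices to show the latter has $\Pr$-probability zero.

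Next I would localize to bounded windows to use the explicit conditional description of Poisson configurations recalled in \cref{e.bulky}. Since $x \in Q_R$ iff $s(x) \leq R$, any pair of distinct support points with $s(x) = s(y)$ lies inside $Q_R$ for every integer $R$ exceeding $s(x)$. Hence the bad event above equals the countable union over $R \in \N$ of
\[
E_R := \bigl\{\exists\, x \neq y \in \supp(\mu \mres Q_R) : s(x) = s(y)\bigr\},
\]
and by countable subadditivity it is enough to prove $\Pr[E_R] = 0$ for each fixed $R$.

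Finally, I would handle $E_R$ by conditioning on $N := \mu(Q_R)$. Given $N = n$, the $n$ particles inside $Q_R$ are i.i.d.\ uniform on $Q_R$, so the induced family $(s(x_i))_{i=1}^n$ consists of $n$ i.i.d.\ copies of $s(X)$ with $X$ uniform on $Q_R$. The pushforward of the uniform law on $Q_R$ by the continuous map $s(\cdot) = 2 \|\cdot\|_\infty$ is absolutely continuous on $[0,R]$ (one can write down the density $x \mapsto c_d\, x^{d-1}/R^d$, but only the atomlessness is needed), so for each ordered pair $i \neq j$, Fubini in the conditional law of $s(x_j)$ given $s(x_i)$ yields $\mathbf{P}[s(x_i) = s(x_j)] = 0$. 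A union bound over the $\binom{n}{2}$ pairs gives $\Pr[E_R \mid N = n] = 0$, and summing against the Poisson distribution of $N$ finishes the proof.

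There is no real obstacle in this argument; the only conceptually nontrivial point is the step from the trivial ``for each fixed $s$, $\mu(\partial Q_s) = 0$ a.s.''\ (true because $\partial Q_s$ is Lebesgue-null) to the uniform-in-$s$ statement of the lemma. This is exactly what the bijection $x \mapsto s(x)$ accomplishes: it converts an uncountable quantification over $s$ into a countable quantification over pairs of atoms, at which point an absolute-continuity argument closes everything.
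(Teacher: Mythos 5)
Your proof is correct, but it follows a genuinely different route from the paper's. The paper's argument is a second-moment/union bound over a decomposition of $\Rd$ into countably many thin cubic shells $C_{s^\epsilon_k} := Q_{s^\epsilon_k}\setminus Q_{s^\epsilon_{k-1}}$ of Lebesgue measure $\epsilon/k$: since any boundary $\partial Q_s$ is entirely contained in one shell, two particles on a common boundary force some shell to contain at least two points, and $\Pr[\mu(C)\geq 2]\leq (\rho|C|)^2$ gives a total bound $\lesssim (\rho\epsilon)^2$, which is then sent to zero. You instead observe that the map $x\mapsto s(x)=2\|x\|_\infty$ identifies the boundary scale of each atom, reducing the uncountable quantification over $s$ to the countable event that two atoms share a value of $s(\cdot)$, and then kill that event by atomlessness of the pushforward of the uniform law under $s$, after localizing and conditioning on the particle count. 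Both arguments are complete; yours is the more standard ``no two Poisson points share a radial coordinate'' device and avoids the paper's limiting parameter $\epsilon$ and the slightly fussy shell construction, at the cost of invoking the conditional i.i.d.-uniform representation and simplicity of the process (which the paper's shell argument gets for free, since a doubled atom would itself land two points in one shell). The paper's version is marginally more robust in that it only uses the elementary bound on $\Pr[\text{Poisson}(\lambda)\geq 2]$ and the ability to make the shells small, whereas yours isolates more cleanly where absolute continuity of the intensity enters. Either proof would serve.
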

\begin{proof}
We denote by 
\begin{align*}
\mcl N := \{\mu : \exists 0 < s < \infty, \text{ there exist more than two particles on } \partial Q_s \}.
\end{align*}
Then we choose an increasing sequence $\{s^{\epsilon}_{k}\}_{k \geq 0}$ with $s^{\epsilon}_{0} = 0$, such that
\begin{align*}
\Rd = \bigsqcup_{k=1}^{\infty}C_{s^{\epsilon}_{k}}, 
\qquad C_{s^{\epsilon}_{k}} := Q_{s^{\epsilon}_{k}} \backslash Q_{s^{\epsilon}_{k-1}}, 
\qquad \vert C_{s^{\epsilon}_{k}} \vert = \frac{\epsilon}{k}.
\end{align*}
Then we have that 
\begin{align*}
\Pr \Ll[\mcl N\Rr] &\leq \Pr \Ll[\exists k, \mu(C_{s^{\epsilon}_{k}}) \geq 2\Rr]  \\
&\leq \sum_{k = 1}^{\infty} \Pr \Ll[\mu(C_{s^{\epsilon}_{k}}) \geq 2\Rr]\\
&\leq \sum_{k = 1}^{\infty} \Ll(\rho \vert C_{s^{\epsilon}_{k}}\vert \Rr)^2 \\ 
&\leq (\rho \epsilon)^2.
\end{align*}
We we let $\epsilon$ go down to $0$ and prove that $\Pr \Ll[\mcl N\Rr] = 0$.
\end{proof}

For this reason, in the following, we can do modification of the probability space and always suppose that there is at most one particle on the boundary. This helps us to prove the following regularity property for $\Ll(\mt^f_s \Rr)_{s \geq 0}$.

\begin{lemma}\label{lem:MtJump}
After a modification, for any $f \in C^{\infty}_c(\mmd(\Rd))$ the process $\Ll(\mt^f_s \Rr)_{s \geq 0}$ is a c\`adl\`ag $L^2$-martingale with finite variation, and the discontinuity point occurs for $s$ such that $\mu(\partial Q_s) = 1$. 
\end{lemma}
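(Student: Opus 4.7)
The statement has three components: the $L^2$-martingale property, the c\`adl\`ag regularity (with local finite variation), and the characterization of the jump locus. I would prove all three by exploiting the explicit Poisson representation of $\mt^f_s$.

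The martingale structure is immediate from the tower property: since $\mcl F_{Q_{s_1}}\subset \mcl F_{Q_{s_2}}$ for $s_1 \le s_2$, one has $\Er[\mt^f_{s_2}\vert \mcl F_{Q_{s_1}}] = \mt^f_{s_1}$, and boundedness of $f\in C^\infty_c(\mmd(\Rd))$ together with Jensen's inequality gives $\|\mt^f_s\|_{L^\infty}\le \|f\|_{L^\infty}$, whence the $L^2$ property.

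For the path regularity, let $V\subset \Rd$ be a compact set on which $f$ is $\mcl F_V$-measurable. For $s$ large enough that $V\subset Q_s$ one has $\mt^f_s = f$. For smaller $s$, the independence of $\mu\mres (V\cap Q_s)$ and $\mu\mres(V\setminus Q_s)$ under $\Pr$ yields
\begin{equation*}
\mt^f_s(\mu) = \Er_\nu\Ll[f\Ll(\mu\mres(V\cap Q_s)+\nu\Rr)\Rr],
\end{equation*}
where $\nu$ is an independent Poisson point process of intensity $\rho$ on $V\setminus Q_s$. Since $Q_s$ is a \emph{closed} cube, as $s'\downarrow s$ the restriction $\mu\mres(V\cap Q_{s'})$ stabilizes to $\mu\mres(V\cap Q_s)$ as soon as $s'$ lies below $2\|x_i\|_\infty$ for the smallest $\|x_i\|_\infty > s/2$; combined with the continuity $|V\setminus Q_{s'}|\to |V\setminus Q_s|$ and bounded convergence in the $\nu$-expectation, this gives right-continuity. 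As $s'\uparrow s$, instead $Q_{s'}\uparrow Q_s^\circ$ (the open cube), and the left limit exists and equals $\Er_{\nu'}[f(\mu\mres(V\cap Q_s^\circ)+\nu')]$ with $\nu'$ Poisson on $V\setminus Q_s^\circ$.

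The jump at $s$ is therefore nontrivial precisely when $\mu\mres(V\cap Q_s^\circ)\neq \mu\mres(V\cap Q_s)$, i.e., when there is a particle of $\mu$ on $V\cap \partial Q_s$; \Cref{lem:OneParticle} then forces $\mu(\partial Q_s)=1$ on the modified probability space. For local finite variation on any $[0,S]$: jump times are in bijection with particles $x_i$ having $\|x_i\|_\infty \le S/2$, a.s.\ finitely many, each jump bounded by $2\|f\|_{L^\infty}$; between consecutive jumps $\mu\mres(V\cap Q_s)$ is constant and $s\mapsto \mt^f_s$ varies only through the smoothly-varying Poisson intensity on $V\setminus Q_s$, yielding a locally bounded derivative via termwise differentiation of the Poisson series \eqref{e.bulky}. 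The main subtlety is precisely this last point — bounding the $s$-derivative of the continuous deterministic part between jumps — but it reduces to a direct computation with the Poisson series in the parameter $|V\setminus Q_s|$, uniformly controlled by $\|f\|_{L^\infty}<\infty$. Everything else is bookkeeping using Poisson independence and the closedness of $Q_s$.
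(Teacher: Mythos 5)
Your proof is correct and follows the same underlying idea as the paper's, namely exploiting the Poisson independence of $\mu\mres(Q_s)$ and $\mu\mres(Q_s^c)$ to represent $\mt^f_s$ as an explicit $\nu$-expectation and then tracking how the two pieces change as $s$ varies. The one genuine structural difference is in how right-continuity is obtained. The paper's proof invokes the ``classical'' right-continuity of the filtration $(\mcl F_{Q_s})_{s\ge 0}$ as a black box to conclude that a c\`adl\`ag modification exists, and then only computes the \emph{left} limit at points with $\mu(\partial Q_s)=0$; you instead compute both one-sided limits directly from the Poisson representation via a stabilization-plus-coupling argument (the restriction $\mu\mres(V\cap Q_{s'})$ is eventually constant as $s'\downarrow s$, and the annulus $Q_{s'}\setminus Q_s$ has vanishing intensity, so bounded convergence applies). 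This buys you a self-contained proof that does not depend on the filtration-regularity result, which the paper cites but does not verify, at the cost of having to spell out the coupling for both one-sided limits. Your identification of the jump locus as the set of $s$ with a particle on $V\cap\partial Q_s$ is slightly sharper than the lemma's statement (which only asserts the implication ``jump $\Rightarrow \mu(\partial Q_s)=1$''), but consistent with it after invoking \Cref{lem:OneParticle}. For the finite-variation claim, the paper records the explicit local Lipschitz bound $\vert \A_{s-\epsilon}f - \A_s f\vert \le C\rho\epsilon s^{d-1}\Vert f\Vert_{L^\infty}$, which is more quantitative than your appeal to termwise differentiation of the Poisson series; in a written-up version you should display such a bound rather than leave it as ``bookkeeping,'' since it is used implicitly when integrating against $\d I^{u_t}_s$ in Step~3 of the localization proof.
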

\begin{proof}
By the classical martingale theory, we know that $\{\mcl{F}_{Q_s}\}_{s \geq 0}$ is a right continuous filtration, thus after a modification the process is c\`adl\`ag. Moreover, from \Cref{lem:OneParticle} we can modify the value to $0$ on a negligible set so that $\mu(\partial Q_s) \leq 1$ for all positive $s$. It remains to prove that if $\mu(\partial Q_s) = 0$, then the process is also left continuous. In this case, there exists a $0 <  \epsilon_0 < s$ such that for any $0 < \epsilon < \epsilon_0$, we have $\mu(Q_{s-\epsilon}) = \mu(Q_s)$. Then 
\begin{align*}
\A_s f(\mu) = \A_s f(\mu \mres Q_s) = \A_s f(\mu \mres Q_{s - \epsilon}).
\end{align*}
We use $\mu \mres Q_{s} = (\mu \mres Q_{s - \epsilon}) + (\mu \mres (Q_s \backslash Q_{s - \epsilon}))$, then 
\begin{align*}
\A_{s-\epsilon} f(\mu) &= \Er \Ll[ \A_s f(\mu \mres Q_{s}) \vert \mcl{F}_{Q_{s-\epsilon}} \Rr] \\
&= \Er \Ll[ \A_s f(\mu \mres Q_{s - \epsilon} + \mu \mres (Q_s \backslash Q_{s - \epsilon})) \vert \mcl{F}_{Q_{s-\epsilon}} \Rr]\\
&= \Pr\Ll[\mu(Q_s \backslash Q_{s - \epsilon}) = 0\Rr]\A_s f(\mu \mres Q_{s - \epsilon}) \\
& \qquad  +  \Pr\Ll[\mu(Q_s \backslash Q_{s - \epsilon}) \geq 1\Rr] \Er\Ll[\A_s f \vert \mcl{F}_{Q_{s-\epsilon}}, \mu(Q_s \backslash Q_{s - \epsilon}) \geq 1 \Rr]\\
&= e^{-\rho\vert Q_s \backslash Q_{s-\epsilon}\vert}  \A_s f(\mu \mres Q_{s - \epsilon}) + \left(1 - e^{-\rho\vert Q_s \backslash Q_{s-\epsilon}\vert}\right)\Er\Ll[\A_s f \vert \mcl{F}_{Q_{s-\epsilon}}, \mu(Q_s \backslash Q_{s - \epsilon}) \geq 1 \Rr].
\end{align*}
If we suppose that $\Vert f \Vert_{L^\infty}$ is finite, then we have $\lim_{\epsilon \searrow 0} \A_{s-\epsilon} f(\mu) = \A_{s} f(\mu)$. Moreover, we have a estimate that 
\begin{align*}
\Ll\vert\A_{s-\epsilon} f(\mu) - \A_{s} f(\mu) \right\vert \leq C \rho \epsilon s^{d-1} \Vert f \Vert_{L^\infty}.
\end{align*}
This implies that the c\`adl\`ag martingale $\Ll(\mt^f_s \Rr)_{s \geq 0}$ is locally Liptchitz for the continuous part, thus it is almost surely of finite variation.
\end{proof}

The following corollaries are simple applications of the result above.

\begin{corollary}\label{cor:Bracket}
For $f \in C^{\infty}_c(\mmd(\Rd))$, we can define a bracket process for $\Ll(\mt^f_s \Rr)_{s \geq 0}$:
we define that  
\begin{align}\label{eq:Bracket}
\Ll[\mt^f\Rr]_s := \sum_{0 < \tau \leq s}\Ll(\Delta \mt^f_\tau \Rr)^2, \qquad \Delta \mt^f_\tau = \mt^f_\tau - \mt^f_{\tau-}, \qquad \tau \text{ is jump point}.
\end{align}
Then $\Ll(\Ll(\mt^f_s\Rr)^2 - \Ll[\mt^f\Rr]_s\Rr)_{s \geq 0}$ is a martingale with respect to $\Ll(\Omega, \Ll(\mcl F_{Q_s}\Rr)_{s \geq 0}, \Pr\Rr)$.
\end{corollary}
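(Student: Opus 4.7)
The plan is to apply the classical identity that $M^2 - [M]$ is a martingale for any càdlàg $L^2$-martingale $M$, specialized here to the setting where $M = \mt^f$ is of finite variation (by \Cref{lem:MtJump}), so that the pathwise quadratic variation reduces to the pure-jump sum defining $[\mt^f]_s$ in \cref{eq:Bracket}.

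First I will verify that the sum $[\mt^f]_s$ is well-defined and even integrable. By \Cref{lem:OneParticle} and \Cref{lem:MtJump}, the discontinuity scales of $s \mapsto \mt^f_s$ are exactly the scales $s = 2 \Vert x_i \Vert_\infty$ attached to atoms $x_i$ of $\mu$, so the number of jumps in $[0,s]$ equals $\mu(Q_s)$, which is almost surely finite with mean $\rho \vert Q_s \vert$. Since $f \in C_c^{\infty}(\mmd(\Rd))$ is bounded, each jump satisfies $\vert \Delta \mt^f_\tau \vert \leq 2 \Vert f \Vert_{L^\infty}$, and hence $[\mt^f]_s \leq 4 \Vert f \Vert_{L^\infty}^2 \mu(Q_s)$ lies in $L^1(\Pr)$.

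Next I will apply the Itô formula for càdlàg finite-variation processes, which for $x \mapsto x^2$ reads
\begin{equation*}
\Ll(\mt^f_t\Rr)^2 - \Ll(\mt^f_s\Rr)^2 = 2\int_s^t \mt^f_{u-} \, d\mt^f_u + \sum_{s < \tau \leq t}\Ll(\Delta \mt^f_\tau\Rr)^2
\end{equation*}
for every $0 \leq s < t$. The continuous component of a finite-variation càdlàg path contributes nothing to the pathwise quadratic variation, so the sum on the right is $[\mt^f]_t - [\mt^f]_s$. I will verify this identity pathwise, by enumerating the jump scales $0 < \tau_1 < \tau_2 < \cdots$ falling in $(s,t]$, using that on each open interval between consecutive jumps $\mt^f$ is locally Lipschitz (by the estimate in the proof of \Cref{lem:MtJump}), so that the ordinary chain rule for $x \mapsto x^2$ applies, and then adding the explicit telescoping contribution $\Ll(\mt^f_{\tau_i}\Rr)^2 - \Ll(\mt^f_{\tau_i-}\Rr)^2 = 2 \mt^f_{\tau_i-} \Delta \mt^f_{\tau_i} + \Ll(\Delta \mt^f_{\tau_i}\Rr)^2$ at each jump.

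Finally, I will show that the Lebesgue–Stieltjes integral $N_s := \int_0^s \mt^f_{u-} \, d\mt^f_u$ is a martingale with respect to $\Ll(\mcl F_{Q_s}\Rr)_{s \geq 0}$. The integrand is predictable and uniformly bounded by $\Vert f \Vert_{L^\infty}$ (since $\mt^f_u$ is a conditional expectation of $f$), while $\mt^f$ is a finite-variation $L^2$-martingale whose total variation on $[0,s]$ is dominated by $2 \Vert f \Vert_{L^\infty} \mu(Q_s)$ plus the locally Lipschitz absolutely continuous contribution, hence is in $L^1$ on compacts. Standard stochastic integration against finite-variation martingales then yields that $N$ is a martingale, and taking $\Er[\cdot \vert \mcl F_{Q_s}]$ in the displayed identity concludes. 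The main delicacy is that the jump scales of $\mt^f$ are themselves random, driven by the atoms of the Poisson point process, rather than deterministic; however \Cref{lem:OneParticle} ensures these scales are distinct and countably enumerable, making the pathwise bookkeeping unambiguous.
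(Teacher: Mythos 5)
Your proof is correct. The paper itself offers no argument here: it simply cites the general theory of jump processes (\cite{jacod2013limit}, Chapter 4e), under which a c\`adl\`ag $L^2$-martingale of finite variation is purely discontinuous and its quadratic variation is the sum of squared jumps, so that $M^2-[M]$ is a martingale. What you do instead is reprove that standard fact in this specific setting, and the chain of reasoning holds together: (i) the potential jump scales are a.s.\ finite in number on $[0,s]$ and distinct by \Cref{lem:OneParticle}, and each jump is bounded by $2\Vert f\Vert_{L^\infty}$, so $[\mt^f]_s\in L^1$; (ii) the pathwise change-of-variables formula for a finite-variation c\`adl\`ag path (chain rule on the locally Lipschitz stretches between the finitely many jumps, from \Cref{lem:MtJump}, plus the telescoping jump corrections) gives $(\mt^f_t)^2-(\mt^f_s)^2=2\int_s^t\mt^f_{u-}\,\d\mt^f_u+[\mt^f]_t-[\mt^f]_s$; (iii) the Stieltjes integral of the bounded left-continuous adapted (hence predictable) integrand $\mt^f_{u-}$ against a martingale of integrable variation is a martingale, by the usual elementary-integrand-plus-monotone-class argument, using your $L^1$ bound on the total variation. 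The only cosmetic inaccuracies are the claim that the jumps occur \emph{exactly} at the scales $2\Vert x_i\Vert_\infty$ (some of these could be zero jumps; containment is all you need) and the unstated right-sided analogue of the Lipschitz estimate from \Cref{lem:MtJump}, neither of which affects the argument. Your version buys a self-contained verification that makes explicit where finite variation and the a.s.\ distinctness of the jump scales enter; the paper's citation buys brevity at the cost of leaving those points implicit.
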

\begin{proof}
This is a direct result from jump process; see \cite[Chapter 4e]{jacod2013limit}.
\end{proof}
 
\begin{corollary}\label{cor:LeftLimit}
Let $x \in \supp(\mu)$, and we define a stopping time for $x$
\begin{align}\label{eq:StoppingTime}
\tau(x) := \min\{s \geq 0 \vert x \in Q_s\},
\end{align}
and the normal direction $\n(x)$ and we define 
\begin{align}
\A_{\tau(x)-}f(\mu - \delta_x + \delta_{x-}) := \lim_{\epsilon \searrow 0} \A_{\tau(x)-\epsilon}f(\mu - \delta_x + \delta_{x-\epsilon \n(x)}). 
\end{align}
Then we have almost surely
\begin{align}
\A_{\tau(x)-}f(\mu) = \A_{\tau(x)} f(\mu - \delta_x), \qquad \A_{\tau(x)-}f(\mu - \delta_x + \delta_{x-}) = \A_{\tau(x)}f(\mu).
\end{align}
\end{corollary}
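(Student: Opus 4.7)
The plan is to reduce both identities to the continuity of the sample paths $s \mapsto \A_s f(\nu)$ established in \Cref{lem:MtJump}, combined with the integral representation of the conditional expectation as an average over an independent Poisson process outside the conditioning cube. As a preliminary genericity observation, note that almost surely, for every $x \in \supp(\mu)$ the normal direction $\n(x)$ is unambiguously defined, i.e.\ $x$ lies in the relative interior of a face of $\partial Q_{\tau(x)}$ rather than on an edge or corner; indeed, the set of $y \in \Rd$ where $\max_i |y_i|$ is attained at two distinct indices is a finite union of $(d-1)$-dimensional hyperplanes, hence of Lebesgue measure zero, and contains no Poisson point almost surely. For such $x$ and every sufficiently small $\epsilon > 0$, the shifted position $x - \epsilon \n(x)$ then lies in the interior of $Q_{\tau(x)-\epsilon}$.

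For the first identity, observe that for every $\epsilon > 0$ the particle $x$ lies outside $Q_{\tau(x)-\epsilon}$, so $\mu \mres Q_{\tau(x)-\epsilon} = (\mu - \delta_x) \mres Q_{\tau(x)-\epsilon}$ and therefore $\A_{\tau(x)-\epsilon} f(\mu) = \A_{\tau(x)-\epsilon} f(\mu - \delta_x)$. By \Cref{lem:OneParticle}, $x$ is almost surely the unique $\mu$-particle on $\partial Q_{\tau(x)}$, hence $(\mu - \delta_x)(\partial Q_{\tau(x)}) = 0$, and \Cref{lem:MtJump} guarantees that the sample path $s \mapsto \A_s f(\mu - \delta_x)$ has no jump at $s = \tau(x)$. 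Letting $\epsilon \searrow 0$ yields $\A_{\tau(x)-} f(\mu) = \A_{\tau(x)} f(\mu - \delta_x)$.

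For the second identity, we use the independence of $\mu \mres Q_s$ and $\mu \mres Q_s^c$ to write
$$\A_s f(\nu) = \Er_{\eta}\Ll[ f(\nu \mres Q_s + \eta) \Rr],$$
with $\eta$ an independent Poisson process on $Q_s^c$. Applied to $\nu = \mu - \delta_x + \delta_{x - \epsilon \n(x)}$ and $s = \tau(x) - \epsilon$, and using $x \notin Q_{\tau(x)-\epsilon}$ together with $x - \epsilon \n(x) \in Q_{\tau(x)-\epsilon}$, this gives
$$\A_{\tau(x)-\epsilon} f(\mu - \delta_x + \delta_{x - \epsilon \n(x)}) = \Er_{\eta}\Ll[ f(\mu \mres Q_{\tau(x)-\epsilon} + \delta_{x-\epsilon\n(x)} + \eta) \Rr].$$
Splitting $\eta = \eta_1 + \eta_2$ with $\eta_1$ on the annulus $A_\epsilon := Q_{\tau(x)} \setminus Q_{\tau(x)-\epsilon}$ and $\eta_2$ on $Q_{\tau(x)}^c$: since $\Pr(\eta_1 = 0) = \exp(-\rho |A_\epsilon|) \to 1$ and $f$ is bounded, the contribution of $\{\eta_1 \neq 0\}$ vanishes. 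On $\{\eta_1 = 0\}$, using $\mu \mres Q_{\tau(x)} = \mu \mres Q_{\tau(x)-\epsilon} + \delta_x$ for $\epsilon$ small enough (by \Cref{lem:OneParticle} and local finiteness of $\mu$) and the smoothness of $f \in C^\infty_c(\mmd(\Rd))$ under transport of a single particle from $x - \epsilon \n(x)$ to $x$, the integrand converges pointwise to $f(\mu \mres Q_{\tau(x)} + \eta_2)$. Dominated convergence then identifies the limit with $\Er_{\eta_2}[f(\mu \mres Q_{\tau(x)} + \eta_2)] = \A_{\tau(x)} f(\mu)$.

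The main obstacle is the second identity, where three perturbations must be controlled simultaneously as $\epsilon \to 0$: the displacement of the particle from $x - \epsilon \n(x)$ to $x$, the change of conditioning scale from $\tau(x) - \epsilon$ to $\tau(x)$, and the possible extra Poisson mass on the annulus $A_\epsilon$. The almost sure uniqueness of the boundary particle from \Cref{lem:OneParticle} ensures $A_\epsilon$ is typically empty in the limit, and the $C^\infty_c$ smoothness of $f$ handles the particle displacement, so dominated convergence closes the argument.
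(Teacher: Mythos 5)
Your proof of the first identity is essentially the same as the paper's: reduce to the path $s\mapsto \A_s f(\mu-\delta_x)$ having no jump at $s=\tau(x)$, which follows from \Cref{lem:OneParticle} and \Cref{lem:MtJump}. The genericity remark that $x$ a.s.\ lies in the relative interior of a face of $\partial Q_{\tau(x)}$ (so $\n(x)$ is well defined and $x-\epsilon\n(x)\in \op{Q_{\tau(x)-\epsilon}}$ for small $\epsilon$) is a useful clarification that the paper leaves implicit.

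For the second identity, however, you take a genuinely different route. The paper argues by introducing two separate parameters and interchanging iterated limits:
\begin{align*}
\A_{\tau(x)}f(\mu)
= \lim_{\epsilon_1 \searrow 0}\A_{\tau(x)}f(\mu - \delta_x + \delta_{x-\epsilon_1 \n(x)})
= \lim_{\epsilon_2 \searrow 0}\lim_{\epsilon_1 \searrow \epsilon_2}\A_{\tau(x)-\epsilon_2}f(\mu - \delta_x + \delta_{x-\epsilon_1 \n(x)})
= \lim_{\epsilon \searrow 0} \A_{\tau(x)-\epsilon}f(\mu - \delta_x + \delta_{x-\epsilon \n(x)}),
\end{align*}
where the first equality uses smoothness of $f$ in a single particle position, the second uses left-continuity of $s\mapsto \A_s f(\cdot)$ at $\tau(x)$ (no boundary particle), and the collapse of the double limit to the diagonal limit relies on the uniform Lipschitz-type modulus for the continuous part established in \Cref{lem:MtJump} together with continuity with respect to $x$. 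You instead unroll $\A_s f(\nu) = \Er_\eta[f(\nu\mres Q_s + \eta)]$ with $\eta$ an independent Poisson process on $Q_s^c$, decompose $\eta = \eta_1 + \eta_2$ over the annulus $A_\epsilon = Q_{\tau(x)}\setminus Q_{\tau(x)-\epsilon}$ and $Q_{\tau(x)}^c$, dispose of $\{\eta_1\neq 0\}$ by $\Pr(\eta_1\neq 0)\to 0$ and boundedness of $f$, and on $\{\eta_1 = 0\}$ use factorization by independence of $\eta_1,\eta_2$, local finiteness of $\mu$, pointwise convergence $f(\mu\mres Q_{\tau(x)} - \delta_x + \delta_{x-\epsilon\n(x)}+\eta_2)\to f(\mu\mres Q_{\tau(x)}+\eta_2)$, and dominated convergence over the fixed probability space of $\eta_2$. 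Both arguments are sound. Your version is more self-contained and arguably more transparent, since it avoids the somewhat delicate interchange of iterated limits and does not need the uniform-in-$\mu$ Lipschitz estimate from \Cref{lem:MtJump}; the paper's version is shorter on the page but leans more heavily on the regularity facts already proved for the spatial martingale.
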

\begin{proof}
The equation $\A_{\tau(x)-}f(\mu) = \A_{\tau(x)} f(\mu - \delta_x)$ is the result of left continuous: from \Cref{lem:OneParticle} we know with probability $1$ there is only $x$ on $\partial Q_{\tau(x)}$ and $\mu - \delta_x$ does not have particle on the boundary so we apply \Cref{lem:MtJump} and obtain this equation.

For the second equation, we have 
\begin{align*}
\A_{\tau(x)}f(\mu) &= \lim_{\epsilon_1 \searrow 0}\A_{\tau(x)}f(\mu - \delta_x + \delta_{x-\epsilon_1 \n(x)}) \\
&= \lim_{\epsilon_1 \searrow 0}\A_{\tau(x)}f(\mu - \delta_x + \delta_{x-\epsilon_1 \n(x)}) \\
&= \lim_{\epsilon_2 \searrow 0}\lim_{\epsilon_1 \searrow \epsilon_2}\A_{\tau(x)-\epsilon_2}f(\mu - \delta_x + \delta_{x-\epsilon_1 \n(x)}) \\
&= \lim_{\epsilon \searrow 0} \A_{\tau(x)-\epsilon}f(\mu - \delta_x + \delta_{x-\epsilon \n(x)}). 
\end{align*}
In the last step, we use the uniformly left continuous for $\A_s f$ and the continuity with respect to $x$.
\end{proof}

One important remark about the conditional expectation is that in fact for $f \in C_c^{\infty}(\mmd(\Rd))$, we may have $\A_L f \notin C_c^{\infty}(\mmd(\Rd))$. The reason is that the conditional expectation creates a small gap at the boundary for the function. 
Here we give an example of the conditional expectation for $\Er[f \vert \mcl F_{B_r}]$, which is easier to state but it shares the same property of $\A_L f$.

\begin{example}\label{ex:CounterExample}
Let $\eta \in C^{\infty}_c(\Rd)$ be a plateau function: 
\begin{align*}
\supp(\eta) \subset B_1, 0 \leq \eta \leq 1, \eta \equiv 1 \text{ in } B_{\frac{1}{2}},  \eta(x) = \eta(\vert x \vert) \text{ decreasing with respect to } \vert x \vert.
\end{align*}
and we define our function 
\begin{align*}
f(\mu) = \left(\int_{\Rd} \eta(x) \, \d\mu(x)\right) \wedge 3 .
\end{align*}
We define the level set $B_r$ such that 
\begin{align*}
B_r := \left\{x \in \Rd \left| \frac{1}{2} \leq \eta(x) \leq 1 \right. \right\}.
\end{align*}
Then, we have $\Er[f \vert \mcl{F}_{B_r}] \notin C^{\infty}_c(\mmd(\Rd))$.
\end{example}
\begin{proof}
Let $\mu_1 = \mu \mres B_r, \mu_2 = \mu \mres (B_1 \backslash B_r) $, then since $\supp(f) \subset B_1$, we have that 
\begin{align*}
\Er[f \vert \mcl{F}_{B_r}] = \left( \mu_1(\eta) + \mu_2(\eta)\right) \wedge 3
\end{align*}
Let us choose a specific configuration to see that $\Er[f \vert \mcl{F}_{B_r}](\mu)$ is not even continuous:
\begin{align*}
\mu_1 = \delta_{x_1} + \delta_{x_2} + \delta_{x_3}, \text{ where } x_1, x_2 \in B_{\frac{1}{2}}, x_3 \in B_r \backslash B_{\frac{1}{2}}.
\end{align*}
Then we can calculate that $2.5 \leq \mu_1(\eta) < 3$ and $2.5 \leq \Er[f \vert \mcl{F}_{B_r}](\mu) < 3$. However, if we take another $\mu_1$ that 
\begin{align*}
\mu_1 = \delta_{x_1} + \delta_{x_2} + \delta_{x_3} + \delta_{x_4}, \text{ where } x_1, x_2 \in B_{\frac{1}{2}}, x_3 \in B_r \backslash B_{\frac{1}{2}}, x_4 \in B_r.
\end{align*}
Then we see that $\mu_1(\eta) > 3$ and we have $\Er[f \vert \mcl{F}_{B_r}](\mu) = 3$. Therefore, once the 4-th particle $x_4$ enters the ball $B_r$, the value of the function will jump to $3$. From this we conclude that $\Er[f \vert \mcl{F}_{B_r}] \notin C^{\infty}_c(\mmd(\Rd))$.
\end{proof}

\bigskip
To make the conditional expectation more regular, we introduce its regularized version: for any $0 < \epsilon < \infty$, we define 
\begin{align}
\A_{s,\epsilon} f := \frac{1}{\epsilon} \int_{0}^{\epsilon} \A_{s + t} f \, dt,
\end{align}
Then we have the following properties.

\begin{proposition}
For any $f \in H^1_0(\mmd (\Rd))$, the function $\A_{s,\epsilon} f \in H^1_0(\mmd (\Rd))$ and $\Ll(\Er\Ll[ (\A_{s,\epsilon} f)^2 \Rr]\Rr)_{s \geq 0}$ a $C^1$ increasing process.
\end{proposition}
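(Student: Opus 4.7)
The plan is to treat the two assertions separately: the $H^1_0$-regularity of $\A_{s,\epsilon}f$ will be derived from a pointwise gradient bound on $\A_{s+t}f$ combined with averaging in $t$ and a density argument, while the $C^1$-monotonicity of $\phi(s) := \Er[(\A_{s,\epsilon}f)^2]$ will follow from an explicit formula exploiting the martingale structure of $(\mt^f_s)_{s\geq 0}$ established in the previous subsection.

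For the regularity, I first take $f \in C^\infty_c(\mmd(\Rd))$ and analyze the gradient of $\A_{s+t}f$. For a fixed $(\mu, x)$ with $x \in \supp(\mu)$, the set $\{t \in [0,\epsilon] : x \in \partial Q_{s+t}\}$ contains at most one element, so almost every $t$ falls into one of two cases: if $x \notin \overline{Q_{s+t}}$, translating $x$ by $h\e_k$ leaves $\mu \mres Q_{s+t}$ unchanged and $\nabla \A_{s+t}f(\mu,x) = 0$; if $x$ lies in the interior of $Q_{s+t}$, the perturbed point stays inside $Q_{s+t}$ and the gradient commutes with the conditional expectation, from which Jensen and stationarity give
\begin{equation*}
\Er\Ll[\int_{\Rd} \vert \nabla \A_{s+t}f \vert^2\,d\mu\Rr] \leq \Er\Ll[\int_{Q_{s+t}} \vert \nabla f \vert^2\,d\mu\Rr] \leq \Er\Ll[\int_{\Rd} \vert \nabla f \vert^2\,d\mu\Rr].
\end{equation*}
Defining $\nabla \A_{s,\epsilon}f := \frac{1}{\epsilon}\int_0^\epsilon \nabla \A_{s+t}f\,dt$ and applying Jensen together with Fubini then yields $\|\A_{s,\epsilon}f\|_{H^1} \leq \|f\|_{H^1}$. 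To pass from this $H^1$-bound to membership in $H^1_0$ for general $f \in H^1_0$, I approximate $f$ by $f_n \in C^\infty_c$ in $H^1$; the bound above makes $\A_{s,\epsilon}f_n$ Cauchy in $H^1$ with limit $\A_{s,\epsilon}f$, so it remains to approximate each $\A_{s,\epsilon}f_n$ in $H^1$ by elements of $C^\infty_c(\mmd(\Rd))$. Here I would exploit two facts: first, $\A_{s,\epsilon}f_n$ is $\mcl F_{Q_{s+\epsilon}}$-measurable (since $\A_{s+t}f_n$ depends only on $\mu\mres Q_{s+t}$ for each $t \in [0,\epsilon]$), so the problem is genuinely local; second, the averaging in $t$ eliminates the configuration-jumps of \Cref{ex:CounterExample}, making $\A_{s,\epsilon}f_n$ continuous in $\mu$, after which a standard mollification and truncation inside $Q_{s+\epsilon}\cup \supp(f_n)$ produces the required $C^\infty_c$-approximating sequence.

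For the $C^1$-monotonicity, set $g(u) := \Er[(\mt^f_u)^2]$, which is non-decreasing by the $L^2$-isometry for martingales. Using $\Er[\mt^f_u \mt^f_v] = g(u\wedge v)$ together with Fubini gives
\begin{equation*}
\phi(s) = \frac{2}{\epsilon^2}\int_s^{s+\epsilon} g(u)(s+\epsilon - u)\,du,
\end{equation*}
and Leibniz differentiation yields
\begin{equation*}
\phi'(s) = \frac{2}{\epsilon^2}\int_s^{s+\epsilon}\Ll(g(u) - g(s)\Rr)du \geq 0.
\end{equation*}
The remaining point is that $g$ is continuous: by \Cref{lem:OneParticle} we have $\mu(\partial Q_s) = 0$ $\Pr$-a.s.\ for each fixed $s$, and the left-limit identification in \Cref{cor:LeftLimit} together with bounded convergence give $g(s-) = g(s)$; right-continuity is immediate from the càdlàg property of \Cref{lem:MtJump}. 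Continuity of $g$ makes $\phi'$ a continuous function of $s$, so $\phi \in C^1$, and the display above already delivers positivity.

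The main obstacle I anticipate is the $H^1$-approximation of $\A_{s,\epsilon}f_n$ by functions in $C^\infty_c(\mmd(\Rd))$. The gradient bound and its consequence $\|\A_{s,\epsilon}f\|_{H^1} \leq \|f\|_{H^1}$ are routine, but without the averaging in $t$ one runs into the obstruction of \Cref{ex:CounterExample}: a conditional expectation of a smooth function can fail to be continuous in $\mu$, ruling out direct mollification. The whole point of the regularization $\A_{s,\epsilon}$ is to spread the boundary jumps over an $\epsilon$-window so that continuity in the configuration is restored, and it is this smoothing that makes the mollification/truncation feasible.
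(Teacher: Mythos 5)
Your treatment of the second assertion (that $s\mapsto \Er[(\A_{s,\epsilon}f)^2]$ is $C^1$ and increasing) matches the paper's: both rest on the martingale identity $\Er[\mt^f_u\mt^f_v]=\Er[(\mt^f_{u\wedge v})^2]$ and a Leibniz differentiation of the resulting integral formula. The gap is in the first assertion. You set $\nabla\A_{s,\epsilon}f:=\frac1\epsilon\int_0^\epsilon\nabla\A_{s+t}f\,\d t$ on the grounds that for fixed $(\mu,x)$ almost every $t$ places $x$ strictly inside or strictly outside $Q_{s+t}$. This silently interchanges the limit $h\to0$ with the integral in $t$, and the interchange fails: for $x$ in the boundary layer $Q_{s+\epsilon}\setminus\op{Q_{s}}$ and $\e_k$ the outward normal direction, the set of $t$ with $x\in Q_{s+t}$ but $x+h\e_k\notin Q_{s+t}$ has measure $\approx h$, and on that set the difference quotient equals $\frac1h\Ll(\A_{s+t}f(\mu-\delta_x)-\A_{s+t}f(\mu)\Rr)$, which is of order $1/h$ rather than $O(1)$. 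In the limit this window contributes the finite term $-\frac{\n(x)}{\epsilon}\,\Delta A_{\tau(x)}f$, the jump of the c\`adl\`ag martingale $\mt^f$ at the stopping time $\tau(x)$ (identified through \Cref{cor:LeftLimit}). The correct formula on the boundary layer is
\begin{equation*}
\nabla\A_{s,\epsilon}f(\mu,x)=\frac1\epsilon\int_{\tau(x)-s}^{\epsilon}\A_{s+t}\Ll(\nabla f(\mu,x)\Rr)\d t-\frac{\n(x)}{\epsilon}\,\Delta A_{\tau(x)}f,
\end{equation*}
and your version keeps only the first piece.

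The omission is not cosmetic. First, your claimed contraction $\|\A_{s,\epsilon}f\|_{H^1}\le\|f\|_{H^1}$ is then false: the jump term contributes $\frac{2}{\epsilon^2}\Er\bigl[\int_{Q_{s+\epsilon}\setminus\op{Q_{s}}}|\Delta A_{\tau(x)}f|^2\,\d\mu\bigr]$, which the paper controls by recognizing it as an increment of the bracket process of \Cref{cor:Bracket} and using the $L^2$ isometry, giving $\frac{2}{\epsilon^2}\Er\bigl[(\A_{s+\epsilon}f)^2-(\A_sf)^2\bigr]$ --- finite, hence $\A_{s,\epsilon}f\in H^1_0(\mmd(\Rd))$, but not dominated by the Dirichlet energy of $f$ alone. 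Second, this jump term is precisely what drives Section 4: it produces the term \cref{eq:PerThreeTerms}-c2 in the proof of \Cref{thm:localization}, which is why the derivative $\frac{\d}{\d s}\Er[(\A_{s,\epsilon}u_t)^2]$ appears in the differential inequality for the multi-scale functional. A version of the proposition without it would leave the localization argument without its key mechanism. (Your closing plan to mollify $\A_{s,\epsilon}f_n$ in the configuration variable is left as a sketch, but the paper does not carry out such an approximation either; the substantive defect is the missing boundary term.)
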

\begin{proof}
We calculate the formula for $\Er\Ll[ (\A_{s,\epsilon} f)^2 \Rr]$:
\begin{align*}
\Er\Ll[ (\A_{s,\epsilon} f)^2 \Rr] = \frac{1}{\epsilon^2} \int_{0}^{\epsilon} \int_{0}^{\epsilon}\Er\Ll[ \A_{s+t_1} f \A_{s+t_2} f \Rr] \, \d t_1 \d t_2.
\end{align*}
As we know that $\Er\Ll[ \A_{s+t_1} f \A_{s+t_2} f \Rr] = \Er\Ll[ (\A_{s+ (t_1 \wedge t_2)} f)^2 \Rr] $, we obtain that 
\begin{align}\label{eq:E2Regularized}
\Er\Ll[ (\A_{s,\epsilon} f)^2 \Rr] = \frac{2}{\epsilon^2} \int_{0}^{\epsilon} (\epsilon - t) \Er\Ll[(\A_{s+ t} f)^2 \Rr] \, \d t.
\end{align}
Then we calculate its derivative that for $0 < h < \epsilon$
\begin{align*}
& \lim_{h \searrow 0}\frac{1}{h}\left(\Er\Ll[ (\A_{s+h, \epsilon} f)^2 \Rr] - \Er\Ll[ (\A_{s, \epsilon} f)^2 \Rr]\right) \\
= & \lim_{h \searrow 0} \frac{2}{h\epsilon^2}\Ll( \int_{\epsilon}^{\epsilon + h}  (\epsilon + h - t)\Er\Ll[ (\A_{s+t} f)^2 \Rr] \, \d t - \int_{0}^{h}  (\epsilon - t)\Er\Ll[ (\A_{s+t} f)^2 \Rr] \, \d t  + \int_{h}^{\epsilon} h \Er\Ll[ (\A_{s+t} f)^2 \Rr] \, \d t \Rr)\\
= & \frac{2}{\epsilon^2} \int_{0}^{\epsilon}  \Er\Ll[ (\A_{s+t} f)^2 \Rr] -  \Er\Ll[ (\A_{s} f)^2 \Rr] \, \d t. 
\end{align*}
In the last step, we use the right continuity and this proves that 
\begin{align}\label{eq:E2ds}
\frac{d}{ds}  \Er\Ll[ (\A_{s, \epsilon} f)^2 \Rr] = \frac{2}{\epsilon^2} \int_{0}^{\epsilon}  \Er\Ll[ (\A_{s+t} f)^2 \Rr] -  \Er\Ll[ (\A_{s} f)^2 \Rr] \, \d t.
\end{align}
Then we calculate the partial derivative. We use the formula that 
\begin{align}\label{eq:Derivative}
\e_k \cdot \nabla \A_{s,\epsilon} f(\mu, x) = \lim_{h \to 0}\frac{1}{h}\Ll(  \frac{1}{\epsilon} \int_{0}^{\epsilon} \A_{s + t} f(\mu - \delta_x + \delta_{x + h \e_k}) - \A_{s + t} f(\mu)\, dt\Rr).
\end{align}
We study this derivative case by case.
\begin{enumerate}
\item Case $x \in Q_{s+\epsilon}^c$. In this case, in \cref{eq:Derivative}, for a $h$ small enough, for any $t \in [0, \epsilon]$, neither $x$ nor $x + h \e_k$ is in $Q_{t+s}$, so we have $\A_{s + t} f(\mu - \delta_x + \delta_{x + h \e_k}) = \A_{s + t} f(\mu \mres Q_{s+t})$. This implies that \cref{eq:Derivative} is $0$ in this case.
\item Case $x \in \op{Q_{s}}$. In this case, for a $h$ small enough, for any $t \in [0, \epsilon]$, both $x$ and $x + h \e_k$ is in $Q_{t+s}$, then we have 
\begin{align*}
\e_k \cdot \nabla \A_{s,\epsilon} f(\mu, x) &= \lim_{h \to 0}\frac{1}{h}\Ll(  \frac{1}{\epsilon} \int_{0}^{\epsilon} \A_{s + t} f(\mu - \delta_x + \delta_{x + h \e_k}) - \A_{s + t} f(\mu)\, dt\Rr) \\
&= \frac{1}{\epsilon} \int_{0}^{\epsilon} \lim_{h \to 0}\frac{1}{h} \Ll(\A_{s + t} f(\mu - \delta_x + \delta_{x + h \e_k}) - \A_{s + t} f(\mu)\Rr)\, dt \\
&= \A_{s,\epsilon} \Ll(\e_k \cdot \nabla f(\mu, x)\Rr).
\end{align*}
\item Case $x \in \Ll( Q_{s+\epsilon} \backslash \op{Q_{s}} \Rr)$, $\e_k$ is the normal direction. In this case, we study at first the situation $\n(x)$ and $h \searrow 0$. We divide \cref{eq:Derivative} in three terms:
\begin{align*}
\e_k \cdot \nabla \A_{s,\epsilon} f(\mu, x) &= \mathbf{I} + \mathbf{II} + \mathbf{III} \\
\mathbf{I} &= \frac{1}{\epsilon} \int_{0}^{\epsilon} \Ind{s + t < \tau(x)} \frac{1}{h} \Ll(\A_{s + t} f(\mu - \delta_x + \delta_{x + h \e_k}) - \A_{s + t} f(\mu)\Rr)\, dt \\
\mathbf{II} &=  \frac{1}{\epsilon} \int_{0}^{\epsilon} \Ind{s + t \geq \tau(x) + h} \frac{1}{h} \Ll(\A_{s + t} f(\mu - \delta_x + \delta_{x + h \e_k}) - \A_{s + t} f(\mu)\Rr)\, dt \\ 
\mathbf{III} &= \frac{1}{\epsilon} \int_{0}^{\epsilon} \Ind{\tau(x) \leq s + t < \tau(x) + h} \frac{1}{h} \Ll(\A_{s + t} f(\mu - \delta_x + \delta_{x + h \e_k}) - \A_{s + t} f(\mu)\Rr)\, dt. 
\end{align*}
The term $\mathbf{I}$ and $\mathbf{II}$ are similar as we have discussed above and we have 
\begin{align*}
\lim_{h \searrow 0} \mathbf{I} + \mathbf{II} = \frac{1}{\epsilon} \int_{0}^{\epsilon} \Ind{s+t > \tau(x)} \A_{s+t} \Ll(\e_k \cdot \nabla f(\mu, x)\Rr)\, dt.
\end{align*}
For the term $\mathbf{III}$, since $x + h\e_k \notin Q_{s+t}$, we have $\A_{s + t} f(\mu - \delta_x + \delta_{x + h \e_k}) = \A_{s + t} f(\mu - \delta_x)$. Then, we use the right continuity of $\A_s f$
\begin{align*}
\lim_{h \searrow 0}\mathbf{III} &= \lim_{h \searrow 0}\frac{1}{h \epsilon} \int_{\tau(x)-s}^{\tau(x)-s+h}  \A_{s + t} f(\mu - \delta_x) - \A_{s + t} f(\mu) \, dt \\
&= \frac{1}{\epsilon} \Ll(\A_{\tau(x)} f(\mu - \delta_x) - \A_{\tau(x)} f(\mu)\Rr). 
\end{align*}
We should also remark that is is also the case we do partial derivative from left, in this case we should pay attention on the term $\mathbf{III}$ which is 
\begin{align*}
\lim_{h \searrow 0}\mathbf{III}' &= \lim_{h \searrow 0} \frac{1}{h\epsilon} \int_{0}^{\epsilon} \Ind{\tau(x)-h \leq s + t < \tau(x)} \Ll(\A_{s + t} f(\mu - \delta_x) - \A_{s + t} f(\mu - \delta_x + \delta_{x - h \e_k})\Rr)\, dt \\
&= \frac{1}{\epsilon} \Ll(\A_{\tau(x)-}f(\mu - \delta_x) - \A_{\tau(x)-}f(\mu - \delta_x + \delta_{x-})\Rr).
\end{align*}
In the last step, we use the left continuity of $A_{\tau(x)} f$ when the particle on the boundary is removed. Thanks to \Cref{cor:LeftLimit}, we know this limit coincide with that of $\mathbf{III}$. In conclusion, we could use the notation \cref{eq:Bracket}
\begin{align}
\Delta A_{\tau(x)} f = A_{\tau(x)} f - A_{\tau(x)-} f,
\end{align}
to unify the two. Thus we see it is nothing but the jump of the  c\`adl\`ag martingale.

\item Case $x \in \Ll( Q_{s+\epsilon} \backslash \op{Q_{s}} \Rr)$, $\e_k$ is not the normal direction. This case is simpler than $\e_k$ is normal direction, where we do not have to consider the term $\mathbf{III}$ in the discussion above.
\end{enumerate}
In summary, we obtain the formula that for any $ x \in \supp(\mu)$
\begin{align}
\nabla \A_{s,\epsilon} f(\mu, x) = \left\{ \begin{array}{ll}
         \A_{s,\epsilon} \Ll(\nabla f(\mu, x)\Rr) & x \in \op{Q_{s}};\\
         \frac{1}{\epsilon} \int_{\tau(x)-s}^{\epsilon} \A_{s+t} \Ll(\nabla f(\mu, x)\Rr)\, dt - \frac{\n(x)}{\epsilon}\Delta A_{\tau(x)} f  & x \in \Ll( Q_{s+\epsilon} \backslash \op{Q_{s}} \Rr);\\
         0 & x \in Q_{s+\epsilon}^c.\end{array} \right. 
\end{align}
Finally, we prove that $\A_{s,\epsilon} f \in H^1_0(\mmd (\Rd))$. It is clear that $\A_{s,\epsilon} f \in L^2(\mmd (\Rd))$ by Jensen's inequality for conditional expectation. For its gradient, we have  
\begin{align*}
\Er\Ll[\int_{\Rd} \vert\nabla \A_{s,\epsilon} \vert^2 \, \d \mu \Rr] & \leq  \Er\Ll[\int_{Q_s} \vert \A_{s,\epsilon} \Ll(\nabla f\Rr) \vert^2 \, \d \mu \Rr] + 2\Er\Ll[\int_{ Q_{s+\epsilon} \backslash \op{Q_{s}} } \Ll\vert \frac{1}{\epsilon} \int_{\tau(x)-s}^{\epsilon} \A_{s+t} \Ll(\nabla f \Rr)\, dt \Rr\vert^2 \, \d \mu \Rr]\\
&  \qquad + \frac{2}{\epsilon^2}\Er\Ll[\int_{ Q_{s+\epsilon} \backslash \op{Q_{s}} } \vert\Delta A_{\tau(x)} f \vert^2 \, \d \mu \Rr].
\end{align*}
For the first and second term in the equation above, we use Jensen's inequality for conditional expectation and Cauchy's inequality that 
\begin{multline*}
\Er\Ll[\int_{Q_s} \vert \A_{s,\epsilon} \Ll(\nabla f\Rr) \vert^2 \, \d \mu \Rr] + 2\Er\Ll[\int_{ Q_{s+\epsilon} \backslash \op{Q_{s}} } \Ll\vert \frac{1}{\epsilon} \int_{\tau(x)-s}^{\epsilon} \A_{s+t} \Ll(\nabla f \Rr)\, dt \Rr\vert^2 \, \d \mu \Rr] \\
\leq \Er\Ll[\int_{Q_s} \vert \nabla f \vert^2 \, \d \mu \Rr] + \frac{2}{\epsilon}\Er\Ll[\int_{ Q_{s+\epsilon} \backslash \op{Q_{s}} } \vert \nabla f \vert^2 \, \d \mu \Rr].
\end{multline*}
For the third term, it is in fact the sum of square of the jump part in the martingale $\Ll(\mt^{f}_s\Rr)_{s \geq 0}$, so we use \Cref{cor:Bracket} that 
\begin{multline*}
\Er\Ll[\int_{ Q_{s+\epsilon} \backslash \op{Q_{s}} } \vert\Delta A_{\tau(x)} f \vert^2 \, \d \mu \Rr] = \Er\Ll[\sum_{s \leq \tau \leq s+\epsilon}  \vert \Delta \mt^{f}_{\tau} \vert^2 \Rr] = \Er\Ll[[\mt^{f}]_{s+\epsilon} - [\mt^{f}]_{s} \Rr] \\
= \Er\Ll[\Ll(\mt^{f}_{s+\epsilon}\Rr)^2 - \Ll(\mt^{f}_{s}\Rr)^2 \Rr] =\Er\Ll[\Ll( A_{s+\epsilon} f\Rr)^2 - \Ll( A_{s} f\Rr)^2 \Rr],
\end{multline*}
where in the last step we also use the $L^2$ isometry for martingale. This concludes the desired result $\A_{s,\epsilon} f \in H^1_0(\mmd (\Rd))$. 
\end{proof}

\subsection{Proof of \Cref{thm:localization}}
In this part, we prove \Cref{thm:localization} in three steps.
\begin{proof}
\textit{Step 1: Setting up.}
We propose a regularized multi-scale functional of \cref{eq:FunctionalMultiscaleContinous}
\begin{equation}\label{eq:FunctionalMultiscaleRegulized1}
S_{k,K,\beta, \epsilon}(f) = \alpha_k \Er\Ll[(\A_{k,\epsilon} f)^2\Rr] + \int_{k}^K \alpha_s  \Ll(\frac{d}{ds}\Er\Ll[(\A_{s, \epsilon} f)^2\Rr]\Rr) \, ds + \alpha_K \Er\Ll[ f^2 - \Ll(\A_{K, \epsilon} f\Rr)^2\Rr],
\end{equation}
where we recall that $\alpha_s = \exp\Ll(\frac{s}{\beta}\Rr)$. The advantage is that $\Er\Ll[(\A_{s, \epsilon} f)^2\Rr]$ is $C^1$ for $s$ from \cref{eq:E2ds}, we can treat it as usual Riemann integral and apply integration by part to obtain an equivalent definition
\begin{equation}\label{eq:FunctionalMultiscaleRegulized2}
S_{k,K,\beta, \epsilon}(f) = \alpha_K \Er\Ll[f^2\Rr] -  \int_{k}^K \alpha'_s \Er\Ll[(\A_{s,\epsilon} f)^2\Rr]\, ds.
\end{equation}
Our object is to calculate $\frac{d}{dt}S_{k,K,\beta, \epsilon}(u_t)$, and we pay attention to $\frac{d}{dt}\Er\Ll[(\A_{s,\epsilon} u_t)^2\Rr]$. We use the formula from \cref{eq:E2Regularized}
\begin{align*}
\frac{d}{dt}\Er\Ll[(\A_{s,\epsilon} u_t)^2\Rr] &= \frac{d}{dt}\frac{2}{\epsilon^2} \int_{0}^{\epsilon} (\epsilon - r) \Er\Ll[(\A_{s+ r} u_t)^2 \Rr] \, \d r \\
&= \frac{d}{dt}\frac{2}{\epsilon^2} \int_{0}^{\epsilon} (\epsilon - r) \Er\Ll[(\A_{s+ r} u_t) u_t\Rr] \, \d r. 
\end{align*}
We define that 
\begin{align}\label{eq:Atilde}
\tilde{\A_{s,\epsilon}} f :=  \frac{2}{\epsilon^2} \int_{0}^{\epsilon} (\epsilon - r) \A_{s+ r} f \, \d r,
\end{align}
and it satisfies similar property as $\A_{s,\epsilon} f $. For example, we have also the formula 
\begin{align}\label{eq:AtildeDerivative}
\nabla \tilde{\A_{s,\epsilon}}f(\mu, x) = \left\{ \begin{array}{ll}
         \tilde{\A_{s,\epsilon}} \Ll(\nabla f(\mu, x)\Rr) & x \in \op{Q_{s}};\\
         \frac{2}{\epsilon^2} \Ll(\int_{\tau(x)-s}^{\epsilon}(\epsilon - r)\A_{s+r} \Ll(\nabla f(\mu, x)\Rr)\, dr - (s+ \epsilon  - \tau(x))\Delta A_{\tau(x)} f \n(x)\Rr) & x \in \Ll( Q_{s+\epsilon} \backslash \op{Q_{s}} \Rr);\\
         0 & x \in Q_{s+\epsilon}^c.\end{array} \right. 
\end{align}
then we have 
\begin{align}\label{eq:dtSemigroupConditional}
\frac{d}{dt}\Er\Ll[(\A_{s,\epsilon} u_t)^2\Rr] = \frac{d}{dt}\Er\Ll[\Ll(\tilde{\A_{s,\epsilon}}u_t\Rr) u_t\Rr] = \Er\Ll[\Ll(\frac{d}{dt}\tilde{\A_{s,\epsilon}}u_t\Rr) u_t\Rr] + \Er\Ll[\tilde{\A_{s,\epsilon}}u_t (\L u_t)\Rr].
\end{align}
We study at first the semi-group. For a function $g \in H^1_0(\mmd(\Rd))$, we recall the definition that 
\begin{align*}
g_t(\mu) = P_t g(\mu) := \Er\Ll[g(\mu_t) | \fil_0\Rr].
\end{align*}
We also know its semi-group that 
\begin{align*}
\frac{d}{dt} P_t g(\mu) = \L P_t g(\mu) \Rightarrow \partial_t g_t(\mu) = \L g_t(\mu).
\end{align*}
Now in our question we propose that $g = \tilde{\A_{s,\epsilon}} u_0$, then we have 
\begin{align*}
g_t(\mu) &= P_t \Ll(\frac{2}{\epsilon^2} \int_{0}^{\epsilon} (\epsilon-r)\Er[u(\mu) | \mcl{F}_{Q_{s+r}}] \, \d r \Rr) \\
&= \Er\Ll[\Ll(\frac{2}{\epsilon^2} \int_{0}^{\epsilon} (\epsilon-r)\Er[u(\mu_t) | \mcl{F}_{Q_{s+r}}] \, \d r \Rr)\Ll \vert \fil_0 \right.\Rr] \\
&= \frac{2}{\epsilon^2} \int_{0}^{\epsilon} (\epsilon-r)\Er\Ll[ \Er\Ll[ u(\mu_t)\Ll \vert \fil_0 \right.\Rr]  | \mcl{F}_{Q_{s+r}}\Rr] \, \d r \\
&= \tilde{\A_{s,\epsilon}} u_t(\mu). 
\end{align*}
Therefore, we have $\frac{d}{dt} \tilde{\A_{s,\epsilon}} u_t(\mu)= \L \tilde{\A_{s,\epsilon}} u_t(\mu)$ and put it back to \cref{eq:dtSemigroupConditional} and use reversibility to obtain that 
\begin{align*}
\frac{d}{dt}\Er\Ll[(\A_{s,\epsilon} u_t)^2\Rr] = 2\Er\Ll[\tilde{\A_{s,\epsilon}} u_t (\L u_t)\Rr].
\end{align*}
We conclude that
\begin{align}\label{eq:ThmLocPfSetup}
\frac{d}{dt} S_{k,K,\beta, \epsilon}(u_t) = 2\alpha_K \Er\Ll[u_t (\L u_t)\Rr] + \int_{k}^K 2\alpha'_s \Er\Ll[\tilde{\A_{s,\epsilon}}u_t (-\L u_t)\Rr]\, ds.
\end{align}

\textit{Step 2: Estimate of a localized Dirichlet energy.}
In this step, we will give an estimate for the term $\Er\Ll[\tilde{\A_{s,\epsilon}}u_t (-\L u_t)\Rr]$ appeared in \cref{eq:ThmLocPfSetup}. We will establish the following lemma.

\begin{lemma}
For any $f \in H^1_0(\mmd(\Rd))$, we define that 
\begin{equation}\label{eq:I}
I^f_s := \Er\Ll[ \int_{Q_s} \nabla f \cdot \a  \nabla f   \, \d\mu \Rr],
\end{equation}
then for $\tilde{\A_{s,\epsilon}} f$ introduced in \cref{eq:Atilde}, for any $s, \theta, \epsilon \in (0,\infty)$, we have 
\begin{equation}\label{eq:Perturbation}
\Er\Ll[\tilde{\A_{s,\epsilon}} f (-\L f)\Rr]  \leq  I^f_{s-1}  + \Lambda\Ll(I^f_{s} - I^f_{s-1}\Rr) + \Lambda\Ll(\frac{\theta}{\epsilon} + 1\Rr) \Ll(I^f_{s+\epsilon} - I^f_{s}\Rr) + \frac{\Lambda}{2\theta} \frac{d}{ds} \Er\Ll[\Ll(\A_{s,\epsilon} f\Rr)^2 \Rr].
\end{equation}
\end{lemma}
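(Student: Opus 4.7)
The proof of the proposition preceding this lemma showed that $\A_{s,\epsilon}f \in H^1_0(\mmd(\Rd))$; the same argument applies verbatim to $\tilde{\A_{s,\epsilon}}f$, so I start from the Dirichlet-form representation
\[
\Er\Ll[\tilde{\A_{s,\epsilon}}f\,(-\L f)\Rr] \;=\; \Er\Ll[\int_{\Rd} \nabla \tilde{\A_{s,\epsilon}}f \cdot \a\nabla f\,\d\mu\Rr],
\]
and use the explicit formula \cref{eq:AtildeDerivative} to split the $\d\mu$-integral over the four regions $\op{Q_{s-1}}$, $Q_s\setminus\op{Q_{s-1}}$, $Q_{s+\epsilon}\setminus\op{Q_s}$, and $Q_{s+\epsilon}^c$. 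The last contributes zero, and the remaining three collectively produce the four terms of \cref{eq:Perturbation}.

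On the deep interior $\op{Q_{s-1}}$, the gradient formula gives $\nabla\tilde{\A_{s,\epsilon}}f = \tilde{\A_{s,\epsilon}}(\nabla f)$, and by locality $\a(\cdot,x)$ is $\mcl F_{Q_s}$-measurable, hence $\mcl F_{Q_{s+r}}$-measurable for every $r \in [0,\epsilon]$. Conditioning each summand $\A_{s+r}(\nabla f)\cdot\a\nabla f$ on $\mcl F_{Q_{s+r}}$ replaces the bare $\nabla f$ by $\A_{s+r}(\nabla f)$, and Jensen's inequality for the convex form $\xi \mapsto \xi\cdot\a\xi$ then gives $\A_{s+r}(\nabla f)\cdot\a\A_{s+r}(\nabla f) \le \A_{s+r}(\nabla f\cdot\a\nabla f)$. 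A Mecke-type computation shows that $\A_{s+r}$ can be dropped under $\Er\circ\int_{\op{Q_{s-1}}}\d\mu$ since $\op{Q_{s-1}} \subset Q_{s+r}$, and averaging in $r$ against the weight $\frac{2(\epsilon-r)}{\epsilon^2}dr$ (which integrates to $1$) leaves exactly $I^f_{s-1}$. On the boundary layer $Q_s\setminus\op{Q_{s-1}}$ where locality fails, I combine Young's inequality, ellipticity $\xi\cdot\a\xi \le \Lambda|\xi|^2$, Jensen applied to $\tilde{\A_{s,\epsilon}}$, and the same Mecke identity to produce the cruder bound $\Lambda(I^f_s - I^f_{s-1})$.

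On the transition annulus, split $\nabla\tilde{\A_{s,\epsilon}}f = C + J\n$ into the continuous part $C$ given by the $r$-integral in \cref{eq:AtildeDerivative} and the jump part $J = -\frac{2(s+\epsilon-\tau(x))}{\epsilon^2}\Delta A_{\tau(x)}f$. Cauchy--Schwarz in $r$, Jensen's inequality, and a Fubini swap that lets us apply the Mecke identity on $B \cap Q_{s+r} \subset Q_{s+r}$ bound the $C$-contribution by $\Lambda(I^f_{s+\epsilon}-I^f_s)$. For the cross term $J\n\cdot\a\nabla f$, Young's inequality with weight $\alpha = \Lambda\epsilon/(4\theta)$ yields $\frac{\Lambda\epsilon}{4\theta}J^2 + \frac{\theta}{\epsilon}\nabla f\cdot\a\nabla f$; the second term integrates to $\frac{\theta}{\epsilon}(I^f_{s+\epsilon}-I^f_s)$, which together with the $C$-contribution gives the coefficient $\Lambda(\frac{\theta}{\epsilon}+1)$ in \cref{eq:Perturbation}. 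The integral of $J^2$ is a weighted sum of squared jumps of $\mt^f$: by \Cref{cor:Bracket} and the $L^2$-isometry it equals $\frac{4}{\epsilon^4}\int_0^\epsilon (\epsilon-r)^2\,d\Er[(\A_{s+r}f)^2]$, and the elementary inequality $(\epsilon-r)^2 \le \epsilon(\epsilon-r)$ combined with the Fubini rewriting of \cref{eq:E2ds} as $\frac{d}{ds}\Er[(\A_{s,\epsilon}f)^2] = \frac{2}{\epsilon^2}\int_0^\epsilon (\epsilon-r)\,d\Er[(\A_{s+r}f)^2]$ converts this into the final term $\frac{\Lambda}{2\theta}\frac{d}{ds}\Er[(\A_{s,\epsilon}f)^2]$.

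The main obstacle is the last step: matching the quadratic weight $(\epsilon-r)^2$ arising from $J^2$ with the linear weight $(\epsilon-r)$ that appears in the derivative formula for $\Er[(\A_{s,\epsilon}f)^2]$ forces the specific scaling $\alpha \sim \epsilon/\theta$ of Young's parameter, and hence the precise coefficients $\frac{\theta}{\epsilon}$ and $\frac{1}{2\theta}$ in the stated bound. A secondary but recurring technicality is the Mecke/tower bookkeeping required to drop $\A_{s+r}$ under $\Er\circ\int\d\mu$, which relies throughout on the relevant integration domain being contained in $Q_{s+r}$.
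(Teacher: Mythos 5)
Your proposal is correct and follows essentially the same route as the paper: you perform the same three-region decomposition of the Dirichlet pairing, use conditioning plus Jensen for $\xi \mapsto \xi\cdot\a\xi$ on the deep interior, Young plus ellipticity on the boundary layer $Q_s\setminus Q_{s-1}$, and the bracket-process/$L^2$-isometry identity for the jump contribution on $Q_{s+\epsilon}\setminus Q_s$. The only cosmetic deviation is in the cross term: you square the whole jump factor $J$ (picking up a $(s+\epsilon-\tau(x))^2$ weight and then using $(\epsilon-r)^2 \le \epsilon(\epsilon-r)$), whereas the paper keeps the weight $(s+\epsilon-\tau(x))$ linear through Young's inequality; both converge to the same $\frac{\Lambda}{2\theta}\frac{d}{ds}\Er[(\A_{s,\epsilon}f)^2]$ term via \cref{eq:E2ds}.
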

\begin{proof}
From \cref{eq:AtildeDerivative}, we can decompose the quantity $\Er\Ll[\tilde{\A_{s,\epsilon}} f (-\L f)\Rr]$ into three terms
\begin{equation}\label{eq:PerThreeTerms}
\begin{split}
\Er\Ll[\tilde{\A_{s,\epsilon}} f (-\L f)\Rr] & =  \underbrace{\Er\Ll[ \int_{Q_{s-1}} \nabla (\tilde{\A_{s,\epsilon}} f) \cdot \a \nabla f \, \d\mu\Rr]}_{\text{\cref{eq:PerThreeTerms}-a}}  +  \underbrace{\Er\Ll[ \int_{Q_s \backslash Q_{s-1}}  \nabla (\tilde{\A_{s,\epsilon}} f) \cdot \a \nabla f   \, \d\mu\Rr]}_{\text{\cref{eq:PerThreeTerms}-b}} \\
& \qquad + \underbrace{\Er\Ll[ \int_{Q_{s+\epsilon} \backslash Q_{s}}  \nabla (\tilde{\A_{s,\epsilon}} f) \cdot \a \nabla f   \, \d\mu\Rr]}_{\text{\cref{eq:PerThreeTerms}-c}}.
\end{split}
\end{equation}

For the first term \cref{eq:PerThreeTerms}-a, since $x \in Q_{s-1}$, then the coefficient is $\mcl F_{Q_s}$ measurable. We use the formula \cref{eq:AtildeDerivative}, \cref{eq:Atilde} and apply Jensen's inequality for conditional expectation 
\begin{align*}
\text{\cref{eq:PerThreeTerms}-a} &= \frac{2}{\epsilon^2}\Er\Ll[ \int_{Q_{s-1}}   \int_{0}^{\epsilon} (\epsilon - r)  \A_{s+ r} (\nabla f)  \cdot \a \nabla f \, \d r  \, \d\mu\Rr] \\
&= \frac{2}{\epsilon^2}\Er\Ll[ \int_{Q_{s-1}}   \int_{0}^{\epsilon} (\epsilon - r) \Er\Ll[\A_{s+ r} (\nabla f) \cdot \a \A_{s+ r} (\nabla f) \, \vert \mcl{F}_{Q_{s+r}}\Rr] \, \d r  \, \d\mu\Rr] \\
&\leq \frac{2}{\epsilon^2}\Er\Ll[ \int_{Q_{s-1}}   \int_{0}^{\epsilon} (\epsilon - r) \Er\Ll[\nabla f \cdot \a  \nabla f \, \vert \mcl{F}_{Q_{s+r}}\Rr] \, \d r  \, \d\mu\Rr]\\
&= \Er\Ll[ \int_{Q_{s-1}}  \nabla f \cdot \a  \nabla f   \, \d\mu\Rr]
\end{align*}

For the second term \cref{eq:PerThreeTerms}-b, it is similar but $\a$ is no longer $\mcl{F}_{Q_s}$ measurable. We use at first  Young's inequality
\begin{align*}
\text{\cref{eq:PerThreeTerms}-b} &\leq \frac{2}{\epsilon^2}\Er\Ll[ \int_{Q_s \backslash Q_{s-1}}   \int_{0}^{\epsilon} (\epsilon - r)  \A_{s+ r} (\nabla f)  \cdot \a \nabla f \, \d r  \, \d\mu\Rr] \\
&\leq \frac{\Lambda}{\epsilon^2}\Er\Ll[ \int_{Q_s \backslash Q_{s-1}}   \int_{0}^{\epsilon} (\epsilon - r)  \Ll( \Ll\vert\A_{s+ r} (\nabla f)\Rr\vert^2 +  \Ll\vert\nabla f \Rr\vert^2 \Rr) \, \d r  \, \d\mu\Rr].
\end{align*}
Then for the part with conditional expectation, we use the uniform bound $1 \leq \a \leq \Lambda$ that 
\begin{align*}
\frac{\Lambda}{\epsilon^2}\Er\Ll[ \int_{Q_s \backslash Q_{s-1}}   \int_{0}^{\epsilon} (\epsilon - r) \Ll\vert\A_{s+ r} (\nabla f)\Rr\vert^2  \, \d r  \, \d\mu\Rr] & \leq \frac{\Lambda}{2}\Er\Ll[ \int_{Q_s \backslash Q_{s-1}}  \vert \nabla f \vert^2  \d\mu\Rr]  \\
& \leq \frac{\Lambda}{2}\Er\Ll[ \int_{Q_s \backslash Q_{s-1}}  \nabla f \cdot \a  \nabla f  \d\mu\Rr]. 
\end{align*}
This concludes that $\text{\cref{eq:PerThreeTerms}-b} \leq \Lambda \Er\Ll[ \int_{Q_s \backslash Q_{s-1}}  \nabla f \cdot \a  \nabla f  \d\mu\Rr]$.

For the third term \cref{eq:PerThreeTerms}-c, we use \cref{eq:AtildeDerivative} and obtain 
\begin{align*}
\text{\cref{eq:PerThreeTerms}-c} &\leq \text{\cref{eq:PerThreeTerms}-c1} + \text{\cref{eq:PerThreeTerms}-c2} \\
\text{\cref{eq:PerThreeTerms}-c1} &= \frac{2}{\epsilon^2}\Ll\vert\Er\Ll[ \int_{Q_{s+\epsilon} \backslash \op{Q_{s}}}   \int_{\tau(x)-s}^{\epsilon} (\epsilon - r) \A_{s+ r} (\nabla f)  \cdot \a \nabla f \, \d r  \, \d\mu\Rr] \Rr\vert\\
\text{\cref{eq:PerThreeTerms}-c2} &= \frac{2}{\epsilon^2}\Ll\vert\Er\Ll[\int_{Q_{s+\epsilon} \backslash \op{Q_{s}}}  (\epsilon - \tau(x))\Delta A_{\tau(x)} f \n(x) \cdot \a \nabla f \, \d\mu\Rr] \Rr\vert.
\end{align*}
The part of \cref{eq:PerThreeTerms}-c1 is similar as that of \cref{eq:PerThreeTerms}-b and we have that 
\begin{align*}
\text{\cref{eq:PerThreeTerms}-c1} \leq \Lambda \Er\Ll[ \int_{Q_{s+\epsilon} \backslash \op{Q_{s}}}  \nabla f \cdot \a  \nabla f  \d\mu\Rr].
\end{align*}
We study the part \cref{eq:PerThreeTerms}-c2 with Young's inequality
\begin{align*}
&\frac{2}{\epsilon^2}\Ll\vert\Er\Ll[\int_{Q_{s+\epsilon} \backslash \op{Q_{s}}}  (\epsilon - \tau(x))\Delta A_{\tau(x)} f \n(x) \cdot \a \nabla f \, \d\mu \Rr] \Rr\vert \\
\leq & \frac{\Lambda}{\theta\epsilon^2}\Er\Ll[\int_{ Q_{s+\epsilon} \backslash \op{Q_{s}}}   (s+ \epsilon - \tau(x))\Delta \vert A_{\tau(x)} f \vert^2 \, \d\mu\Rr] + \frac{\theta\Lambda}{\epsilon^2}\Er\Ll[\int_{Q_{s+\epsilon} \backslash \op{Q_{s}}}   (s + \epsilon - \tau(x)) \vert \nabla f \vert^2 \, \d\mu \Rr] \\
\leq & \frac{\Lambda}{\theta\epsilon^2}\Er\Ll[\int_{Q_{s+\epsilon} \backslash \op{Q_{s}}}   (s+ \epsilon - \tau(x))\Delta \vert A_{\tau(x)} f \vert^2 \, \d\mu\Rr] + \frac{\theta\Lambda}{\epsilon}\Er\Ll[\int_{ Q_{s+\epsilon} \backslash \op{Q_{s}} } \nabla f \cdot \a  \nabla f \, \d\mu \Rr]. \\
\end{align*}
The first part is in fact the bracket process defined in \Cref{cor:Bracket}
\begin{align*}
\frac{\Lambda}{\theta\epsilon^2}\Er\Ll[\int_{Q_{s+\epsilon} \backslash \op{Q_{s}} }   (s+ \epsilon - \tau(x))\Delta \vert A_{\tau(x)} f \vert^2 \, \d\mu\Rr] =  \frac{\Lambda}{\theta\epsilon^2}\Er\Ll[\sum_{s \leq \tau \leq s+\epsilon}  (s+ \epsilon - \tau) \vert \Delta \mt^{f}_{\tau} \vert^2 \Rr].
\end{align*}
Then we develop it with Fubini theorem and the $L^2$-isometry that ${\Er\Ll[\Ll[\mt^f \Rr]_{s}\Rr] = \Er\Ll[(\mt^f_s)^2\Rr] = \Er\Ll[(\A_s f)^2\Rr]}$
\begin{align*}
\frac{\Lambda}{\theta\epsilon^2}\Er\Ll[\sum_{s \leq \tau \leq s+\epsilon}  (\epsilon - \tau) \vert \Delta \mt^{f}_{\tau} \vert^2 \Rr] &= \frac{\Lambda}{\theta\epsilon^2}\Er\Ll[\sum_{s \leq \tau \leq s+\epsilon}  \int_{s}^{s+\epsilon} \Ind{\tau \leq r \leq s+ \epsilon} \, \d r \vert \Delta \mt^{f}_{\tau} \vert^2 \Rr]\\
&= \frac{\Lambda}{\theta\epsilon^2}\Er\Ll[\int_{s}^{s+\epsilon} \sum_{s \leq \tau \leq r}   \vert \Delta \mt^{f}_{\tau} \vert^2 \, \d r\Rr]\\
&= \frac{\Lambda}{\theta\epsilon^2}\Er\Ll[\int_{s}^{s+\epsilon} \Ll[\mt^f \Rr]_{r} - \Ll[\mt^f \Rr]_{s} \, \d r\Rr] \\
&= \frac{\Lambda}{\theta\epsilon^2}\int_{0}^{\epsilon} \Er\Ll[(\A_{s+r} f)^2\Rr] - \Er\Ll[(\A_s f)^2\Rr] \, \d r \\
&= \frac{\Lambda}{2\theta} \frac{d}{ds} \Er\Ll[(\A_{s, \epsilon} f)^2\Rr].
\end{align*}
In the last step, we use the identity \cref{eq:E2ds}. This concludes that 
\begin{align*}
\text{\cref{eq:PerThreeTerms}-c} \leq \Ll(\frac{\theta\Lambda}{\epsilon} + \Lambda\Rr)\Er\Ll[\int_{ Q_{s+\epsilon} \backslash \op{Q_{s}} } \nabla f \cdot \a  \nabla f \, \d\mu \Rr] + \frac{\Lambda}{2\theta} \frac{d}{ds} \Er\Ll[(\A_{s, \epsilon} f)^2\Rr],
\end{align*}
and we combine all the estimate for the three terms \cref{eq:PerThreeTerms}-a, \cref{eq:PerThreeTerms}-b, \cref{eq:PerThreeTerms}-c to obtain the desired result in \cref{eq:Perturbation}.
\end{proof}

\textit{Step 3: End of the proof.} We take $k = \sqrt{t}, K > k$ and and put the estimate \cref{eq:Perturbation} into \cref{eq:ThmLocPfSetup} with $\theta, \epsilon, \beta> 0$ to be fixed, 
\begin{align*}
&\frac{d}{dt} S_{k,K,\beta, \epsilon}(u_t) \\
=& 2\alpha_K \Er\Ll[u_t (\L u_t)\Rr] + \int_{k}^K 2\alpha'_s \Er\Ll[\tilde{\A_{s,\epsilon}}u_t (-\L u_t)\Rr]\, ds \\
\leq & -2\alpha_K I^{u_t}_{\infty} +  \int_{k}^{K} 2\alpha'_s \Ll\{I^{u_t}_{s-1}  + \Lambda\Ll(I^{u_t}_{s} - I^{u_t}_{s-1}\Rr) + \Lambda\Ll(\frac{\theta}{\epsilon} + 1\Rr) \Ll(I^{u_t}_{s+\epsilon} - I^{u_t}_{s}\Rr) + \frac{\Lambda}{2\theta} \frac{d}{ds} \Er\Ll[\Ll(\A_{s,\epsilon} u_t\Rr)^2 \Rr]\Rr\} \, \d s.
\end{align*}
We recall that $\alpha'_s = \frac{\alpha_s}{\beta}$, then we do some calculus and obtain that
\begin{align*}
\frac{d}{dt} S_{k,K,\beta, \epsilon}(u_t) \leq &\int_{k-1}^{K+\epsilon} \Ll(-2 \alpha_{K \wedge (s+1)} + 2\Lambda(\alpha_{s+1} - \alpha_s) + 2\Lambda \Ll(\frac{\theta}{\epsilon} + 1\Rr)(\alpha_s - \alpha_{s-\epsilon})\Rr)\, d I^{u_t}_{s} \\
& \qquad + \int_{0}^{k-1} -2 \alpha_k \, d I^{u_t}_{s} + \int_{K+\epsilon}^{\infty} -2 \alpha_K \, d I^{u_t}_{s} + \frac{\Lambda}{\beta\theta} \int_{k}^K \alpha_s \Ll(\frac{d}{ds} \Er\Ll[\Ll(\A_{s,\epsilon} u_t\Rr)^2 \Rr]\Rr) \, \d s. 
\end{align*}
We see that the term $2\Lambda(\alpha_{s+1} - \alpha_s) \simeq \frac{2\Lambda}{\beta}\alpha_s$ and $2\Lambda \Ll(\frac{\theta}{\epsilon} + 1\Rr)(\alpha_s - \alpha_{s-\epsilon}) \simeq 2\Lambda\Ll(\frac{\theta}{\beta} + \frac{\epsilon}{\beta}\Rr)\alpha_s$.
One can choose the parameters $\theta = \frac{\beta}{2\Lambda}$, $\epsilon = \frac{1}{2}$, then for $\beta > 4\Lambda$, the part of integration with respect to $I^{u_t}_{s}$ is negative. We use the definition \cref{eq:FunctionalMultiscaleRegulized1} and obtain that 
\begin{align*}
\frac{d}{dt}S_{k,K,\beta, \epsilon}(u_t) \leq \frac{\Lambda}{\beta\theta} \int_{k}^K \alpha_s \Ll(\frac{d}{ds} \Er\Ll[\Ll(\A_{s,\epsilon} u_t\Rr)^2 \Rr]\Rr) \, \d s \leq \frac{2\Lambda^2}{\beta^2} S_{k,K,\beta, \epsilon}(u_t),
\end{align*}
which implies that for $k = \sqrt{t} \geq l_u$, ($l_u$ the diameter of support of $u_0$ in \Cref{thm:localization})
\begin{align*}
\alpha_K \Er\Ll[(u_t)^2 - (\A_{K,\epsilon} u_t)^2\Rr] \leq S_{k,K,\beta,\epsilon}(u_t) \leq \exp\Ll(\frac{2\Lambda^2 t}{\beta^2}\Rr)S_{k,K,\beta,\epsilon}(u_0) = \exp\Ll(\frac{2\Lambda^2 t}{\beta^2}\Rr)\alpha_k \Er\Ll[(u_0)^2\Rr].
\end{align*} 
Finally we remark that 
\begin{align*}
\Er\Ll[(u_t - \A_{K+\epsilon} u_t)^2\Rr] = \Er\Ll[(u_t)^2 - (\A_{K+\epsilon} u_t)^2\Rr] \leq \Er\Ll[(u_t)^2 - (\A_{K,\epsilon} u_t)^2\Rr],
\end{align*}
and choose $\beta = \sqrt{t}$ and it gives us the desired result, after shrinking a little the value of $K$.
\end{proof}

\section{Spectral inequality, perturbation and perturbation}\label{sec:Toolbox}
In this section, we collect several estimates used in the proof of the main result. They can also be read for independent interests.

\subsection{Spectral inequality}
The spectral inequality is an important topic in probability theory and Markov process, and it has its counterpart in analysis known as 
Poincar\'e's inequality. 

Let $L > l >0$ and $q = L/l \in \mathbb{N}$, and denote by $\{Q_l^i\}_{1 \leq i\leq q}$ the partition of $Q_L$ by the small cube by scale $l$. Let $\M_{L,l} = (\M_1, \M_2 \cdots \M_q),$ be a random vector that $\M_i = \mu\Ll(Q^i_l\Rr)$, and we define ${\B_{L,l} f := \Er\Ll[f \vert \M_{L,l} \Rr]}$, then we have the following estimate.
\begin{proposition}[Spectral inequality]\label{prop:AB}
There exists a finite positive number $R_0(d)$, such that for any $0 < l < L < \infty$, $L/l \in \mathbb{N}$,  we have an estimate for any $f \in H^1(\mmd(\Rd))$, 
\begin{align}\label{eq:AB}
\Er\Ll[(\A_L f - \B_{L,l} f)\Rr] \leq R_0 l^2 \Er\Ll[\int_{Q_L} \vert\nabla f \vert^2 \, \d \mu\Rr].
\end{align}
\end{proposition}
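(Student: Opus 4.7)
The plan is to reduce the statement to a Poincar\'e inequality for a product of uniform measures on the cubes $\{Q_l^i\}_{1 \le i \le q}$, obtained by conditioning on $\M_{L,l}$.

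First I would observe that since $\B_{L,l} f$ is $\mcl F_{Q_L}$-measurable (it depends only on the counts $\M_i = \mu(Q_l^i)$ with $Q_l^i \subset Q_L$), the tower property gives $\B_{L,l}\A_L f = \B_{L,l} f$. Thus, replacing $f$ by $\A_L f$, it is enough to treat the case where $f$ is $\mcl F_{Q_L}$-measurable; note that this replacement only decreases the right-hand side of \cref{eq:AB}. In this reduced setting, $\A_L f = f$ and the quantity to bound becomes
\begin{equation*}
\Er\Ll[(f - \B_{L,l} f)^2\Rr] = \Er\Ll[\var_{\rho}\bigl(f \,\big\vert\, \M_{L,l}\bigr)\Rr].
\end{equation*}

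Next I would use the explicit conditional law. Conditionally on $\M_{L,l} = (N_1,\ldots,N_q)$, the point measure $\mu\mres Q_L$ is the sum of independent point measures, one per small cube, where the $i$-th one is the sum of $N_i$ i.i.d.\ uniform random variables $X_{i,1},\ldots,X_{i,N_i}$ on $Q_l^i$. Viewing $f$ via the identification \cref{e.def.tdf}, it becomes a symmetric smooth function of these $N_1+\cdots+N_q$ independent variables. Tensorization of the variance across the independent coordinates yields
\begin{equation*}
\var_{\rho}\bigl(f \,\big\vert\, \M_{L,l}\bigr) \le \sum_{i=1}^{q} \sum_{j=1}^{N_i} \Er\Ll[\var_{X_{i,j}}(f) \,\Big\vert\, \M_{L,l}\Rr],
\end{equation*}
where $\var_{X_{i,j}}$ denotes the variance with respect to the uniform law of $X_{i,j}$ on $Q_l^i$, all other coordinates being frozen.

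Now I would apply the classical Poincar\'e inequality for the uniform measure on the cube $Q_l^i$: there is a constant $R_0 = R_0(d)$ such that for every smooth $g : Q_l^i \to \R$,
\begin{equation*}
\var_{\mathrm{Unif}(Q_l^i)}(g) \le R_0\, l^2 \fint_{Q_l^i} \vert \nabla g\vert^2\, \d x.
\end{equation*}
Applied to the $X_{i,j}$-coordinate of $f$, and combined with \cref{def.grad.mu}, this identifies the gradient with the directional derivative $\nabla f(\mu, X_{i,j})$ introduced in \Cref{subsubsec:Ccinfty}.

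Summing over $(i,j)$ and taking expectation, the double sum collapses into an integral against $\mu\mres Q_L$:
\begin{equation*}
\Er\Ll[\var_{\rho}(f\vert\M_{L,l})\Rr] \le R_0\, l^2 \Er\Ll[\sum_{i,j} \vert\nabla f(\mu, X_{i,j})\vert^2\Rr] = R_0\, l^2 \Er\Ll[\int_{Q_L} \vert\nabla f\vert^2\, \d\mu\Rr],
\end{equation*}
which is exactly \cref{eq:AB}. The density argument extends the bound from smooth cylindrical $f$ to all of $H^1(\mmd(\Rd))$ by the definition of the Sobolev norm.

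The main obstacle I anticipate is a careful justification of the two exchange steps, namely: (i) the identification of the conditional law of $\mu\mres Q_L$ given $\M_{L,l}$ as a product of uniform measures (which follows from the defining formula \cref{e.bulky} for the Poisson measure, but must be written symmetrically to accommodate the unordered nature of $\mu$), and (ii) matching the analytic gradient on $Q_l^i$ with the derivative $\nabla f$ defined in \Cref{subsubsec:Ccinfty}, which uses \cref{def.grad.mu}. Both are essentially bookkeeping, so no genuine analytic difficulty is expected beyond invoking standard Poincar\'e on a cube and tensorization of variance.
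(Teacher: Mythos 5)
Your proposal follows essentially the same route as the paper's own proof: both condition on $\M_{L,l}$, identify the conditional law of $\mu\mres Q_L$ as a product of uniform distributions on the sub-cubes, apply the tensorized Poincar\'e inequality (Efron--Stein) to bound the conditional variance, and pass the derivative through the conditional expectation using $\nabla\A_L f = \A_L\nabla f$ together with Jensen's inequality. The only cosmetic difference is that you reduce to $\mcl F_{Q_L}$-measurable $f$ at the start, whereas the paper keeps $\A_L f$ explicit throughout and isolates the Efron--Stein step as a separate numbered display; the substance of the argument is identical.
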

\begin{proof}
We prove at first a simple corollary from Efron-Stein inequality \cite[Chapter 3]{boucheron2013concentration}: let $f_n \in C^1({\R^{d \times n}})$ and $X = (X_1, X_2 \cdots X_n)$, where $(X_i)_{1 \leq i \leq n}$ a family independent $\Rd$-valued random variables following uniform law in $Q_l$, then Efron-Stein inequality states
\begin{align}\label{eq:EfronStein}
\var\Ll[ f_n(X)\Rr] \leq \frac{1}{2} \sum_{i=1}^n \E\Ll[ \Ll(f_n(X) - f_n(X^{i})\Rr)^2\Rr],
\end{align}
where $f_n(X^{i}) :=  \E\Ll[ f_n(X) \vert X_1 \cdots X_{i-1}, X_{i+1}, \cdots X_n\Rr]$. From this, we calculate the expectation with respect to $X_i$ for $\Ll(f_n(X) - f_n(X^{i})\Rr)^2$, and apply the standard Poincar\'e's inequality for $X_i$
\begin{align*}
\E_{X_i}\Ll[ \Ll(f_n(X) - f(X^{i})\Rr)^2\Rr] &= \fint_{Q_l} \Ll(f_n(x_1, x_2, \cdots x_n ) -  \fint_{Q_l} f_n(x_1, x_2, \cdots x_n ) \, \d x_i\Rr)^2 \, \d x_i \\
& \leq C(d) l^2 \fint_{Q_l} \vert \nabla_{x_i} f_n \vert^2 (x_1, x_2, \cdots x_n )  \, \d x_i,\\
\Longrightarrow \E \Ll[ \Ll(f_n(X) - f(X^{i})\Rr)^2\Rr] &\leq C(d) l^2  \E[\vert \nabla_{x_i} f_n(X)\vert^2].
\end{align*}
We combine the sum of all the term and obtain 
\begin{align}\label{eq:SpectralLemma}
\var\Ll[ f_n(X)\Rr] \leq C(d)l^2 \sum_{i=1}^n \E[\vert \nabla_{x_i} f_n(X)\vert^2].
\end{align}

We then apply \cref{eq:SpectralLemma} in \cref{eq:AB}.
\begin{align*}
\Er\Ll[(\A_L f - \B_{L,l} f)^2\Rr] = \sum_{M \in \mathbb{N}^{q}} \Pr[\M_{L,l} = M] \Er\Ll[(\A_L f - \B_{L,l} f)^2 \vert \M_{L,l} = M \Rr].
\end{align*}
Conditioned $\{\M_{L,l} = M\}$, we know that the expectation of $\A_L f$ is $\B_{L,l} f(M)$ and all the particles are distributed uniformly in its small cubes of size $l$, thus we can apply \cref{eq:SpectralLemma} that 
\begin{align*}
\Er\Ll[(\A_L f - \B_{L,l} f)^2 \vert \M_{L,l} = M \Rr] &= \var{\rho}\Ll[ \A_L f \vert \M_{L,l}= M \Rr] \\
&\leq C(d)l^2 \Er\Ll[\int_{Q_L}\vert \nabla \A_L f \vert^2 \, \d \mu \Ll\vert \M_{L,l} = M \Rr.\Rr] \\
&\leq C(d)l^2 \Er\Ll[\int_{Q_L}\vert  \A_L \nabla f \vert^2 \, \d \mu \Ll\vert \M_{L,l} = M \Rr.\Rr].
\end{align*}
Then we do the sum and concludes \cref{eq:AB}.
\end{proof}

\subsection{Perturbation}
A similar version of the following lemma appears in \cite{janvresse1999relaxation}, where the authors give some sketch and here we prove it in our model with some more details. We define a localized Dirichlet form for Borel set $U \subset \Rd$ that 
\begin{align}\label{eq:DirichletLocal}
\Diri_U(f,g) = \Er[g(-\Delta_{U} f)] := \Er\Ll[\int_U \nabla g(\mu, d) \cdot \nabla f(\mu, x) \, \d \mu(x)\Rr],
\end{align}
and we use ${\Diri_U(f) := \Diri_U(f,f)}$ and ${\Diri(f) := \Diri_{\Rd}(f)}$ for short.
\begin{proposition}[Perturbation]\label{prop:Perturbation}
Let $u \in C^{\infty}_c(\mmd(\Rd))$ and $l_k := l_u + 2k$ be the minimal scale such that for any $\vert h\vert \leq k, \supp(\tau_h u) \subset Q_{l_k}$, then for any $g$ such that ${\Er[g] = 1}, {\sqrt{g} \in H^1_0(\mmd(\Rd))}$, we have 
\begin{align}
\Ll(\Er[g(u - \tau_h u)]\Rr)^2 \leq C(d) (l_k \Vert u \Vert_{L^\infty})^2  \Diri_{Q_{l_k}}(\sqrt{g}).
\end{align}
\end{proposition}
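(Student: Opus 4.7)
The plan is to combine a localization step with an integration-by-parts argument along a translation path in configuration space. First, since $\supp(u), \supp(\tau_h u) \subset Q_{l_k}$ (using $l_u + 2|h| \leq l_u + 2k = l_k$), the function $u - \tau_h u$ is $\mcl F_{Q_{l_k}}$-measurable, so I would replace $g$ by $\tilde g := \Er[g \mid \mcl F_{Q_{l_k}}]$ without changing $\Er[g(u - \tau_h u)]$. To control the Dirichlet form under this projection, I use the pointwise convexity of the map $(a,b) \mapsto a^2/b$ on $\R \times (0, \infty)$: applied conditionally with $a = \nabla g(\mu, x)$ and $b = g(\mu)$ for $x \in Q_{l_k}$, it yields the inequality $|\nabla \sqrt{\tilde g}(\mu, x)|^2 \leq \Er\!\left[|\nabla \sqrt g(\mu, x)|^2 \mid \mcl F_{Q_{l_k}}\right]$ pointwise, so that $\Diri_{Q_{l_k}}(\sqrt{\tilde g}) \leq \Diri_{Q_{l_k}}(\sqrt g)$. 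It thus suffices to prove the inequality for the local function $\tilde g$, for which $\nabla \tilde g(\cdot, y) \equiv 0$ whenever $y \notin Q_{l_k}$.

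Next, using the translation invariance of the Poisson law, $\Er[\tilde g(u - \tau_h u)] = \Er[(\tilde g - \tau_{-h} \tilde g)\,u]$. Applying the fundamental theorem of calculus along the one-parameter family of translations $s \mapsto \tau_{sh}\mu$, and then performing the change of variables $\nu := \tau_{sh}\mu$ (which preserves $\Pr$), one arrives at the path representation
\[
\Er[\tilde g(u - \tau_h u)] = -\int_0^1 \Er\!\left[ (\tau_{sh} u)(\nu) \int_{Q_{l_k}} h \cdot \nabla \tilde g(\nu, y)\, d\nu(y) \right] ds,
\]
where the restriction of the inner integral to $Q_{l_k}$ is automatic from the locality of $\tilde g$. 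Writing $\nabla \tilde g = 2\sqrt{\tilde g}\,\nabla \sqrt{\tilde g}$ and applying Cauchy--Schwarz in the $d\nu$-integral and then in the outer expectation, together with the bound $\|\tau_{sh} u\|_{L^\infty} = \|u\|_{L^\infty}$, one extracts the Dirichlet form $\Diri_{Q_{l_k}}(\sqrt{\tilde g})$ with a prefactor of order $|h|^2 \leq l_k^2$. Squaring then yields the claimed inequality.

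The main obstacle will be in the Cauchy--Schwarz step: the naive split produces a factor of the form $\Er[\tilde g \cdot \nu(Q_{l_k})]$, i.e.\ the expected number of particles in $Q_{l_k}$ under the tilted probability measure $\tilde g\, d\Pr$, which is not a priori controlled by its value under $\Pr$ since $g$ is not assumed to lie in $L^\infty$. Following the strategy sketched in \cite{janvresse1999relaxation}, the remedy is to first integrate by parts so as to transfer the directional derivative from $u$ (whose gradient is supported only on $Q_{l_u}$) onto $\sqrt g$, picking up surface terms that vanish $\Pr$-a.s.\ because there is no particle on $\partial Q_{l_u}$ (handled rigorously by smoothing the indicator $\mathbf{1}_{Q_{l_u}}$ and passing to the limit), and then to apply a balanced Cauchy--Schwarz that uses $\Er[g] = 1$ together with the Mecke/Campbell identity to bound the resulting particle-count moment in terms of the Lebesgue measure of $Q_{l_u}$. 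This careful rearrangement is what converts a naive $|h|^2\, l_k^d\, \|u\|_{L^\infty}^2\, \Diri_{Q_{l_k}}(\sqrt g)$ estimate into the sharp $l_k^2\, \|u\|_{L^\infty}^2\, \Diri_{Q_{l_k}}(\sqrt g)$ bound claimed by the proposition.
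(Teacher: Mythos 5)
Your proposal takes a genuinely different route from the paper and, as you yourself anticipate, runs into a gap that the sketched remedy does not close. The localization step $\tilde g := \Er[g \mid \mcl F_{Q_{l_k}}]$ and the pointwise inequality $|\nabla\sqrt{\tilde g}|^2 \le \Er[|\nabla\sqrt g|^2 \mid \mcl F_{Q_{l_k}}]$ via convexity of $(a,b)\mapsto a^2/b$ are both correct and are in fact used (in slightly different form) at the end of the paper's proof. The path representation along $s\mapsto\tau_{sh}\mu$ is also valid. But the Cauchy--Schwarz split in the inner $d\nu$-integral inevitably yields the tilted particle count $\Er[\tilde g\,\nu(Q_{l_k})]$, and this quantity is simply not controlled by $\rho|Q_{l_k}|$ when $g$ is only assumed to have $\Er[g]=1$: the density $\tilde g$ can concentrate on configurations with arbitrarily many particles in $Q_{l_k}$.

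The remedy you propose does not remove this factor. Transferring the derivative by the skew-symmetry of the translation generator (Mecke/Campbell) moves $\nabla$ from $\tilde g$ to $u$ or vice versa, but after the next Cauchy--Schwarz you are back to $\Er[g\,\nu(\cdot)]$ on a cube of size at least $l_u$; the $l_k^d$ (or $l_u^d$) factor is structural in any estimate of the form ``bound a linear functional of $g$ by $\sqrt{\Er[g\cdot N]}\cdot\sqrt{\Diri(\sqrt g)}$.'' Nothing in the Mecke formula uses that $\Er[g]=1$ in a way that would eliminate this moment. Concretely, $\Er[\tilde g\,\nu(Q_{l_k})]=\rho\int_{Q_{l_k}}\Er[\tilde g(\nu+\delta_x)]\,dx$, and $\Er[\tilde g(\nu+\delta_x)]$ is not $1$.

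The paper's proof avoids the issue entirely by a different mechanism: it conditions on the number of particles $n$ in $Q_{l_k}$ (so the reference measure is a fixed product measure on $Q_{l_k}^n$), and then applies the eigenvalue perturbation bound of \cite[Lemma 4.2]{janvresse1999relaxation}, namely, for $A=-\Delta_{Q_{l_k}}$ with spectral gap $\delta=l_k^{-2}$ and $V=u-\tau_h u$ mean-zero and bounded, one has $\lambda_\epsilon\le\epsilon^2\langle V,A^{-1}V\rangle/(1-2\|V\|_{L^\infty}\epsilon\delta^{-1})$. Plugging $f=\sqrt{\A_{Q_{l_k}}g}$ (suitably normalized) into the variational formula produces exactly $\E_{\rho,n}[\A_{Q_{l_k}}g\,V]$ on the left, with the Dirichlet form of $\sqrt{\A_{Q_{l_k}}g}$ on the right, and the bound is $l_k^2$ with no dimensional loss -- precisely because the variational principle normalizes by $\|f\|_{L^2}^2$ rather than by a particle count. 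This spectral input is the missing ingredient in your argument; without it (or an equivalent), the path-and-Cauchy--Schwarz approach overcounts by $l_k^d$. You also need to verify, as the paper does with a short binomial computation, that $\E_{\rho,n}[u-\tau_h u]=0$ on each fixed-$n$ slice (translation invariance of $\Pr$ alone only gives mean zero under the full $\Pr$).
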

\begin{proof}
The proof of this proposition relies on the following lemma:
\begin{lemma}[Lemma 4.2 of \cite{janvresse1999relaxation}]\label{lem:4.2}
Let $(\Omega, \P, \mcl F)$ be a probability space and let ${\langle f, g \rangle = \int_{\Omega} fg \, \d \P}$ denote the standard inner product on $L^2(\Omega, \P, \mcl F)$. Let $A$ be a non-negative definite symmetric operator on $L^2(\Omega, \P, \mcl F)$, which has $0$ as a simple eigenvalue with corresponding eigenfunction the constant function $1$, and second eigenvalue $\delta > 0$ (the spectral gap). Let $V$ be a function of means zero, $\langle 1, V\rangle = 0$ and assume that $V$ is essential bounded. Denote by $\lambda_{\epsilon}$ the principal eigenvalue of $-A + \epsilon V$ given by the variational formula 
\begin{align}\label{eq:4.2.1}
\lambda_{\epsilon} = \sup_{\Vert f \Vert_{L^2} = 1} \langle f, (-A + \epsilon V)f\rangle.
\end{align}
Then for $0 < \epsilon < \delta(2 \Vert V \Vert_{L^{\infty}})^{-1}$,
\begin{align}\label{eq:4.2.2}
0 \leq \lambda_{\epsilon} \leq \frac{\epsilon^2 \langle V, A^{-1}V\rangle}{1 - 2\Vert V \Vert_{L^{\infty}} \epsilon \delta^{-1}}.
\end{align}
\end{lemma}

In our context, we should look for a good frame for this lemma. Since for any ${\vert h \vert \leq k}, {(u - \tau_h u) \in \mcl F_{Q_{l_k}}}$, we have 
\begin{equation}\label{eq:PerSetUp}
\begin{split}
\Er[g(u - \tau_h u)] &= \Er[(\A_{Q_{l_k}} g)(u - \tau_h u)]\\
&= \sum_{n=0}^{\infty}\Pr[\mu(Q_{l_k}) = n]\Er[(\A_{Q_{l_k}} g)(u - \tau_h u) | \mu(Q_{l_k}) = n].
\end{split}
\end{equation}
Then, we focus on the estimate of $\Er[(\A_{Q_{l_k}} g)(u - \tau_h u) | \mu(Q_{l_k}) = n]$: to shorten the notation, we use $\P_{\rho, n}$ for the probability $\Pr[\cdot \vert  \mu(Q_{l_k}) = n]$ and $\E_{\rho, n}$ for its associated expectation. Then we apply  \Cref{lem:4.2} on the probability space $(\Omega, \mcl F_{Q_{l_k}}, \P_{\rho, n})$, where we set $V = u - \tau_h u$ and the symmetric non-negative operator $A$ is $(-\Delta_{Q_{l_k}})$ defined for any $f \in H^1(\mmd(Q_{l_k}))$
\begin{align*}
\E_{\rho, n}[f (-\Delta_{Q_{l_k}} f)] := \E_{\rho, n}\Ll[\int_{Q_{l_k}} \vert \nabla f\vert^2 \, \d \mu  \Rr].
\end{align*}
We should check that this setting satisfies the condition of \Cref{lem:4.2}: 
\begin{itemize}
\item Spectral gap for $A = -\Delta_{Q_{l_k}}$: by \cref{eq:EfronStein} we have the spectral gap $\delta = (l_k)^{-2}$ for any function $f \in H^1(\mmd(Q_{l_k}))$ with ${\E_{\rho, n}[f] = 0}$
\begin{align*}
\E_{\rho, n}[f^2] \leq (l_k)^2\E_{\rho, n}[f (-\Delta_{Q_{l_k}} f)].
\end{align*}
\item Mean zero for $V = u - \tau_h u$: under the probability $\Pr$ this is clear by the transport invariant property of Poisson point process, while under $\P_{\rho, n}$ this requires some calculus. By the definition of $l_k$, we know that ${\supp(u) \subset Q_{l_u}}$, thus we denote by the projection ${u(\mu) = \tilde{u}_m(x_1, x_2, \cdots x_m)}$ under the case $\mu \mres Q_{l_u} = \sum_{i=1}^m \delta_{x_i}$. Then we have 
\begin{align*}
\E_{\rho, n}[u] &= \sum_{m=0}^n \P_{\rho, n}[\mu(Q_{l_u})=m]\E_{\rho, n}[u \vert \mu(Q_{l_u})=m] \\
&= \sum_{m=0}^n {n \choose m} \Ll(\frac{\vert Q_{l_u} \vert}{\vert Q_{l_k}\vert}\Rr)^m \Ll(1 - \frac{\vert Q_{l_u} \vert}{\vert Q_{l_k}\vert}\Rr)^{n-m} \fint_{(Q_{l_u})^m} \tilde{u}_m(x_1, \cdots x_m) \, \d x_1 \cdots \d x_m,
\end{align*}
because under $\P_{\rho, n}$, the number of particles in $Q_{l_u}$ follows the law $\text{Bin}\Ll(n, \frac{\vert Q_{l_u} \vert}{\vert Q_{l_k}\vert}\Rr)$ and they are uniformly distributed conditioned the number. We use the similar argument for the expectation of $\tau_h u$, where we should study the case for particles in $\tau_{-h} Q_{l_u} \subset Q_{l_k}$
\begin{align*}
\E_{\rho, n}[\tau_h u]=& \sum_{m=0}^n \P_{\rho, n}[\mu(\tau_{-h} Q_{l_u})=m]\E_{\rho, n}[\tau_h u \vert \mu(\tau_{-h} Q_{l_u})=m] \\
=& \sum_{m=0}^n {n \choose m} \Ll(\frac{\vert \tau_{-h} Q_{l_u} \vert}{\vert Q_{l_k}\vert}\Rr)^m \Ll(1 - \frac{\vert \tau_{-h} Q_{l_u} \vert}{\vert Q_{l_k}\vert}\Rr)^{n-m} \\
& \qquad \qquad \times \fint_{(\tau_{-h} Q_{l_u})^m} \tilde{u}_m(x_1+h, \cdots x_m+h) \, \d x_1 \cdots \d x_m \\
=& \sum_{m=0}^n {n \choose m} \Ll(\frac{\vert Q_{l_u} \vert}{\vert Q_{l_k}\vert}\Rr)^m \Ll(1 - \frac{\vert Q_{l_u} \vert}{\vert Q_{l_k}\vert}\Rr)^{n-m} \fint_{(Q_{l_u})^m} \tilde{u}_m(x_1, \cdots x_m) \, \d x_1 \cdots \d x_m.
\end{align*}
Thus we establish $\E_{\rho, n}[\tau_h u] = \E_{\rho, n}[u]$ and $V$ has mean zero.
\end{itemize}

Now we can apply the lemma: for any $0 < \epsilon < \frac{1}{8}(\Vert u\Vert_{L^\infty} (l_k)^2)^{-1}$, we put $\sqrt{\A_{Q_{l_k}} g}/\E_{\rho, n}[\A_{Q_{l_k}}g]$ at the place of $f$ in \cref{eq:4.2.1} and combine with \cref{eq:4.2.2} to obtain that 
\begin{align*}
\E_{\rho, n}\Ll[\A_{Q_{l_k}}g (u- \tau_h u)\Rr] &\leq 2\epsilon\E_{\rho, n}[(u- \tau_h u) ((-\Delta_{Q_{l_k}})^{-1}(u- \tau_h u))]\E_{\rho, n}[\A_{Q_{l_k}}g]\\
& \qquad  + \frac{1}{\epsilon}\E_{\rho, n}\Ll[\sqrt{\A_{Q_{l_k}} g} \Ll((-\Delta_{Q_{l_k}}) \sqrt{\A_{Q_{l_k}} g}\Rr)\Rr].
\end{align*}
Notice that $(-\Delta_{Q_{l_k}})^{-1}:L^2 \to H^1$ well-defined thanks to the Lax-Milgram theorem and the spectral bound, we get 
\begin{multline}\label{eq:PtbSmall}
\E_{\rho, n}\Ll[\A_{Q_{l_k}}g (u- \tau_h u)\Rr] \\
\leq 8\epsilon (l_k)^2 \Vert u\Vert^2_{L^{\infty}}\E_{\rho, n}[\A_{Q_{l_k}}g]+ \frac{1}{\epsilon}\E_{\rho, n}\Ll[\sqrt{\A_{Q_{l_k}} g} \Ll((-\Delta_{Q_{l_k}}) \sqrt{\A_{Q_{l_k}} g}\Rr)\Rr].
\end{multline}
For the case $\epsilon > \frac{1}{8}(\Vert u\Vert_{L^\infty} (l_k)^2)^{-1}$, we have $1 \leq 8 \epsilon \Vert u\Vert_{L^\infty} (l_k)^2$, thus we use a trivial bound 
\begin{align}\label{eq:PtbBig}
\E_{\rho, n}\Ll[\A_{Q_{l_k}}g (u- \tau_h u)\Rr] \leq 2\Vert u\Vert_{L^\infty} \E_{\rho, n}\Ll[\A_{Q_{l_k}}g\Rr] \leq 16 \epsilon
(l_k)^2\Vert u\Vert^2_{L^\infty}\Ll[\A_{Q_{l_k}}g\Rr].
\end{align}
We combine \cref{eq:PtbSmall}, \cref{eq:PtbBig} and do optimization with for $\epsilon$ to obtain that 
\begin{align*}
\E_{\rho, n}\Ll[\A_{Q_{l_k}}g (u- \tau_h u)\Rr] \leq 4
l_k \Vert u\Vert_{L^\infty} \Ll(\E_{\rho, n}\Ll[\A_{Q_{l_k}}g\Rr]\E_{\rho, n}\Ll[\sqrt{\A_{Q_{l_k}} g} \Ll((-\Delta_{Q_{l_k}}) \sqrt{\A_{Q_{l_k}} g}\Rr)\Rr]\Rr)^{\frac{1}{2}}.
\end{align*}
Here the term $\E_{\rho, n}\Ll[\sqrt{\A_{Q_{l_k}} g} \Ll((-\Delta_{Q_{l_k}}) \sqrt{\A_{Q_{l_k}} g}\Rr)\Rr]$ is not the desired term and we should remove the conditional expectation here. For any $x \in Q_{l_k}$, using Cauchy-Schwartz inequality we have 
\begin{align*}
\A_{Q_{l_k}}\Ll(\frac{\vert\nabla g(\mu, x)\vert^2}{g(\mu)}\Rr) \A_{Q_{l_k}}g(\mu) \geq \Ll(\A_{Q_{l_k}}\vert \nabla g(\mu, x)\vert\Rr)^2 \geq \Ll\vert \A_{Q_{l_k}} \nabla g(\mu, x)\Rr\vert^2.
\end{align*}
Thus, in the term ${\E_{\rho, n}\Ll[\sqrt{\A_{Q_{l_k}} g} \Ll((-\Delta_{Q_{l_k}}) \sqrt{\A_{Q_{l_k}} g}\Rr)\Rr]}$ we have 
\begin{align*}
\E_{\rho, n}\Ll[\sqrt{\A_{Q_{l_k}} g} \Ll((-\Delta_{Q_{l_k}}) \sqrt{\A_{Q_{l_k}} g}\Rr)\Rr] &= \frac{1}{4}\E_{\rho, n}\Ll[\int_{Q_{l_k}} \frac{\Ll\vert \A_{Q_{l_k}} \nabla g(\mu, x)\Rr\vert^2}{\A_{Q_{l_k}}g(\mu)}\, \d \mu  \Rr]\\
&\leq \frac{1}{4}\E_{\rho, n}\Ll[\int_{Q_{l_k}} \A_{Q_{l_k}}\Ll(\frac{\vert\nabla g(\mu, x)\vert^2}{g(\mu)}\Rr)\, \d \mu  \Rr]\\
&= \E_{\rho, n}\Ll[\sqrt{g} \Ll((-\Delta_{Q_{l_k}}) \sqrt{g}\Rr)\Rr].
\end{align*}
Using the transpose invariant property for $\mu$, we obtain
\begin{align*}
\Ll\vert\E_{\rho, n}\Ll[\A_{Q_{l_k}}g (u- \tau_h u)\Rr]\Rr\vert \leq 4
l_k \Vert u\Vert_{L^\infty} \Ll(\E_{\rho, n}\Ll[\A_{Q_{l_k}}g\Rr]\E_{\rho, n}\Ll[\sqrt{g} \Ll((-\Delta_{Q_{l_k}}) \sqrt{g}\Rr)\Rr]\Rr)^{\frac{1}{2}},
\end{align*}
and put it back to \cref{eq:PerSetUp} and use Cauchy-Schwartz inequality 
\begin{align*}
&\Ll(\Er[g(u - \tau_h u)]\Rr)^2 \\
= &(l_k \Vert u\Vert_{L^\infty})^2\Ll(\sum_{n=0}^{\infty}\Pr[\mu(Q_{l_k}) = n]\Ll(\E_{\rho, n}\Ll[\A_{Q_{l_k}}g\Rr]\E_{\rho, n}\Ll[\sqrt{g} \Ll((-\Delta_{Q_{l_k}}) \sqrt{g}\Rr)\Rr]\Rr)^{\frac{1}{2}}\Rr)^2\\
\leq & (l_k \Vert u\Vert_{L^\infty} )^2\underbrace{\Ll(\sum_{n=0}^{\infty}\Pr[\mu(Q_{l_k}) = n] \E_{\rho, n}\Ll[\A_{Q_{l_k}}g\Rr]\Rr)}_{= \Er[\A_{Q_{l_k}}g]= \Er[g] = 1}  \Ll(\sum_{n=0}^{\infty}\Pr[\mu(Q_{l_k}) = n] \E_{\rho, n}\Ll[\sqrt{g} \Ll((-\Delta_{Q_{l_k}}) \sqrt{g}\Rr)\Rr]\Rr)\\
=& (l_k \Vert u\Vert_{L^\infty} )^2\Er[\sqrt{g}(-\Delta_{Q_{l_k}} \sqrt{g})].
\end{align*}
\end{proof}

\subsection{Entropy}

We recall the definition of $\delta$-good configuration for $\frac{L}{l} \in \mathbb{N}$
\begin{align*}
\mcl C_{L,l,\rho,\delta} = \Ll\{M \in \mathbb{N}^{\Ll(\frac{L}{l}\Rr)^d} \Ll\vert \forall 1 \leq i\leq \Ll(\frac{L}{l}\Rr)^d, \Ll\vert\frac{M_i}{\rho \vert Q_l \vert} - 1 \Rr\vert \leq \delta \Rr.\Rr\}.
\end{align*}
\begin{lemma}[Bound for entropy]\label{lem:Entropy}
Given $l \geq 1, \frac{L}{l} \in \mathbb{N}$, $0 < \delta < \frac{\rho}{2}$ for any $M \in \mcl C_{L,l,\rho,\delta}$, we have a bound for the entropy of $g_M$ defined in \cref{eq:defgM} that 
\begin{align}\label{eq:Entropy}
H(g_M) \leq C(d, \rho)\Ll(\frac{L}{l}\Rr)^d \Ll(\log(l) + l^d\delta^2\Rr). 
\end{align}
\begin{proof}
\begin{align*}
H(g_M) = \Er[g_M \log(g_M)] = -\Er[g_M \log(\Pr[\M_{L,l} = M])].
\end{align*}
It suffices to prove a upper bound for $-\log(\Pr[\M_{L,l} = M])$, which is 
\begin{align}\label{eq:LogPM}
-\log(\Pr[\M_{L,l} = M]) = -\log\Ll(\prod_{i=1}^{\Ll(\frac{L}{l}\Rr)^d} e^{-\rho \vert Q_l\vert}\frac{(\rho \vert Q_l\vert)^{M_i}}{M_i !}\Rr) = \sum_{i=1}^{\Ll(\frac{L}{l}\Rr)^d} -\log\Ll(e^{-\rho \vert Q_l\vert}\frac{(\rho \vert Q_l\vert)^{M_i}}{M_i !}\Rr).
\end{align}
For every term $M_i$, we set $\delta_i := \frac{M_i}{\rho \vert Q_l \vert} - 1$, and use Stirling's formula upper bound ${n! \leq e\sqrt{n}\Ll(\frac{n}{e}\Rr)^n}$ for any $n \in \mathbb{N}$
\begin{align*}
-\log\Ll(e^{-\rho \vert Q_l\vert}\frac{(\rho \vert Q_l\vert)^{M_i}}{M_i !}\Rr) & = \rho \vert Q_l\vert - M_i \log(\rho \vert Q_l\vert)+ \log(M_i !) \\
&\leq \rho \vert Q_l\vert - M_i \log(\rho \vert Q_l\vert)+ \log\Ll(e \sqrt{M_i}\Ll(\frac{M_i}{e}\Rr)^{M_i}\Rr) \\
&\leq \rho\vert Q_l\vert\Ll( \frac{M_i}{\rho\vert Q_l\vert} \log\Ll(\frac{M_i}{\rho\vert Q_l\vert}\Rr) + 1 - \frac{M_i}{\rho\vert Q_l\vert}\Rr) + \frac{1}{2}\log(M_i) \\
&= \rho\vert Q_l\vert\Ll( (1+\delta_i) \underbrace{\log\Ll(1+\delta_i\Rr)}_{\leq \delta_i} - \delta_i \Rr) + \frac{1}{2}\underbrace{\log(M_i)}_{\leq C\log(l)} \\
&\leq \rho\vert Q_l\vert(\delta_i)^2 + C\log(l). 
\end{align*}
We use $\vert \delta_i \vert \leq \delta$ and put it back to \cref{eq:LogPM} and obtain the desired result.
\end{proof}
\end{lemma}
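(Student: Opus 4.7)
The starting observation is that $g_M$ is a normalized indicator, so its entropy collapses to a single logarithm:
\begin{align*}
H(g_M) = \Er[g_M \log g_M] = -\log \Pr[\M_{L,l} = M],
\end{align*}
and therefore it suffices to lower bound $\Pr[\M_{L,l} = M]$ for a $\delta$-good configuration.

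The plan is to exploit the independence of the coordinates $(\M_i)_{1 \le i \le (L/l)^d}$ under $\Pr$. Since $\mu \mres Q_l^i$ and $\mu \mres Q_l^j$ are independent for disjoint cubes and each $\M_i$ is Poisson of parameter $\rho |Q_l|$, we get
\begin{align*}
-\log \Pr[\M_{L,l} = M] = \sum_{i=1}^{(L/l)^d}\left(\rho|Q_l| - M_i \log(\rho|Q_l|) + \log(M_i!)\right),
\end{align*}
and it is enough to bound each summand by $C(\rho)(l^d \delta_i^2 + \log l)$, where $\delta_i := \frac{M_i}{\rho|Q_l|} - 1$ satisfies $|\delta_i| \le \delta < \rho/2$ by the $\delta$-good assumption, after which summing $(L/l)^d$ identical bounds and using $|\delta_i| \le \delta$ yields \cref{eq:Entropy}.

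For each summand I would apply the Stirling upper bound $n! \le e\sqrt{n}(n/e)^n$ to get
\begin{align*}
-\log\!\left(e^{-\rho|Q_l|}\frac{(\rho|Q_l|)^{M_i}}{M_i!}\right) \le \rho|Q_l|\bigl((1+\delta_i)\log(1+\delta_i) - \delta_i\bigr) + \tfrac{1}{2}\log M_i + 1.
\end{align*}
The elementary inequality $(1+x)\log(1+x) - x \le x^2$ on $|x| < 1/2$ (immediate from Taylor expansion) converts the first term into $\rho|Q_l|\delta_i^2 \lesssim_\rho l^d \delta_i^2$; and the crude bound $M_i \le (1+\delta)\rho|Q_l| \lesssim_\rho l^d$ together with $l \ge 1$ gives $\log M_i \lesssim_\rho \log l$.

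The proof is essentially bookkeeping. The only step requiring any care is the Taylor control $(1+x)\log(1+x) - x \le x^2$, which is why the hypothesis $\delta < \rho/2$ (equivalently $|\delta_i| < 1/2$) enters; without this restriction one would pick up cubic error terms and have to argue more carefully about the tail regime, but for $\delta$-good configurations this never happens. Everything else — independence, Stirling, summation — is direct, so no real obstacle is expected.
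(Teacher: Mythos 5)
Your proof follows the same route as the paper's: rewrite $H(g_M)$ as $-\log\Pr[\M_{L,l}=M]$, factor over the independent cubes, apply Stirling's upper bound, and reduce to the elementary inequality $(1+x)\log(1+x) - x \le x^2$ together with $\log M_i \lesssim \log l$. The structure is correct and the final bound is the right one.

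One point in your justification is off, though. You assert that the hypothesis $\delta < \rho/2$ is ``equivalently $|\delta_i| < 1/2$'' and that the restriction is needed to avoid cubic error terms in the Taylor expansion of $(1+x)\log(1+x)-x$. Neither claim holds. First, $\delta_i := M_i/(\rho|Q_l|) - 1$ obeys $|\delta_i| \le \delta < \rho/2$, which gives $|\delta_i| < 1/2$ only when $\rho \le 1$; for larger $\rho$ the bound on $\delta$ allows $|\delta_i| \ge 1/2$. Second, the inequality $(1+x)\log(1+x) - x \le x^2$ in fact holds for every $x > -1$ and needs no Taylor remainder control at all: since $1+x > 0$ and $\log(1+x) \le x$, one has $(1+x)\log(1+x) - x \le (1+x)x - x = x^2$. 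This is exactly what the paper does, and it avoids both the spurious range restriction and the appeal to Taylor's theorem. Your argument would still reach the correct conclusion because the inequality is true anyway, but as written the justification is incomplete and attributes a role to $\delta < \rho/2$ that it does not play.
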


\bigskip

\subsection*{Acknowledgments}
I am grateful to Jean-Christophe Mourrat for his suggestion to study this topic and inspiring discussions, Chenmin Sun and Jiangang Ying for helpful discussions. I would like to thank NYU Courant Institute for supporting the academic visit, where part of this project is carried out. 

\bibliographystyle{abbrv}
\bibliography{KawasakiRef}

\end{document}